\definecolor{blue}{RGB}{49,40,166}
\newcommand{\cF}{\mathcal{F}}
\newcommand{\cG}{\mathcal{G}}
\newcommand{\cI}{\mathcal{I}}
\newcommand{\cK}{\mathcal{K}}
\newcommand{\cL}{\mathcal{L}}
\newcommand{\cM}{\mathcal{M}}
\newcommand{\cO}{\mathcal{O}}
\newcommand{\cU}{\mathcal{U}}
\newcommand{\claim}{\hfill$\dashv_{\text{\scriptsize{claim}}}$}
\newcommand{\seq}{\subseteq}
\newcommand{\vphi}{\varphi}
\newcommand{\func}{\longrightarrow}
\newcommand{\miff}{\makebox[.4in]{$\Leftrightarrow$}}
\newcommand{\mand}{\makebox[.4in]{and}}
\newcommand{\bbar}{\bar{b}}
\newcommand{\Fraisse}{Fra\"{i}ss\'{e}} 
\newcommand{\thrn}{\text{\thorn}}
\def\G{\mathbb G}
\def\M{\mathbb M}
\def\Im{\operatorname{Im}}
\def\Aut{\operatorname{Aut}}
\def\dcl{\operatorname{dcl}}
\def\acl{\operatorname{acl}}
\def\tp{\operatorname{tp}}
\def\SOP{\operatorname{SOP}}
\def\TP{\operatorname{TP}}
\def\NSOP{\operatorname{NSOP}}
\def\NTP{\operatorname{NTP}}
\def\Th{\operatorname{Th}}
\def\EM{\operatorname{EM}}
\def\eq{\operatorname{eq}}
\def\heq{\operatorname{heq}}
\def\eacl{\operatorname{acl}^{\operatorname{eq}}}
\def\edcl{\operatorname{dcl}^{\operatorname{eq}}}
\def\bdd{\operatorname{bdd}}
\def\Ind{\setbox0=\hbox{$x$}\kern\wd0\hbox to 0pt{\hss$\mid$\hss}
\lower.9\ht0\hbox to 0pt{\hss$\smile$\hss}\kern\wd0}
\def\Notind{\setbox0=\hbox{$x$}\kern\wd0\hbox to 0pt{\mathchardef
\nn=12854\hss$\nn$\kern1.4\wd0\hss}\hbox to
0pt{\hss$\mid$\hss}\lower.9\ht0 \hbox to 0pt{\hss$\smile$\hss}\kern\wd0}
\def\ind{\mathop{\mathpalette\Ind{}}}
\def\nind{\mathop{\mathpalette\Notind{}}}
\newcommand{\inda}{\ind^{\!\!\textnormal{a}}}
\newcommand{\ninda}{\nind^{\!\!\textnormal{a}}}
\newcommand{\indiv}{\ind^{\!\!\textnormal{d}}}
\newcommand{\nindiv}{\nind^{\!\!\textnormal{d}}}
\newcommand{\indf}{\ind^{\!\!\textnormal{f}}}
\let\@cleartopmattertags\relax
\newcommand\articleend
  \let\authors\@empty
  \let\shortauthors\@empty
  \let\contribs\@empty
  \let\xcontribs\@empty
  \let\toccontribs\@empty
  \let\addresses\@empty
  \let\thankses\@empty
\let\@wraptoccontribs\wraptoccontribs
\newtheorem{theorem}{Theorem}[section]
\newtheorem{lemma}[theorem]{Lemma}
\newtheorem{corollary}[theorem]{Corollary}
\newtheorem{proposition}[theorem]{Proposition}
\newtheorem{fact}[theorem]{Fact}
\newtheorem*{theorem*}{Theorem}
\theoremstyle{definition}
\newtheorem{definition}[theorem]{Definition}
\newtheorem{example}[theorem]{Example}
\newtheorem{remark}[theorem]{Remark}
\newtheorem{question}[theorem]{Question}
   \def\MR#1{}
\author{Gabriel Conant}
\begin{document}

\title{An axiomatic approach to free amalgamation}

%\author{Gabriel Conant}

\address{Department of Mathematics\\
University of Notre Dame\\
Notre Dame, IN, 46656, USA}

\email{gconant@nd.edu}

\date{October 13, 2016}

\begin{abstract}
We use axioms of abstract ternary relations to define the notion of a \emph{free amalgamation theory}. These form a subclass of first-order theories, without the strict order property, encompassing many prominent examples of countable structures in relational languages, in which the class of algebraically closed substructures is closed under free amalgamation. We show that any free amalgamation theory has elimination of hyperimaginaries and weak elimination of imaginaries. With this result, we use several families of well-known homogeneous structures to give new examples of rosy theories. We then prove that, for free amalgamation theories, simplicity coincides with $\NTP_2$ and, assuming modularity, with $\NSOP_3$ as well. We also show that any simple free amalgamation theory is $1$-based. Finally, we prove a combinatorial characterization of simplicity for \Fraisse\ limits with free amalgamation, which provides new context for the fact that the generic $K_n$-free graphs are $\SOP_3$, while the higher arity generic $K^r_n$-free $r$-hypergraphs are simple.\medskip

\noindent \textcolor{blue}{Author's note (11/2/2023): This version of the paper has been updated from the published version in order to address a number of important corrections in Section \ref{sec:simple}. To preserve the historical record, brief comments have been added to places where there is an error, with references to the corrigenda attached at the end of this version (starting on page 24). Some minor typos in the main article have been corrected without explicit mention, and the citation list has been updated.}
\end{abstract}

\maketitle

\section{Introduction}

In the classification of unstable first-order theories, the dividing lines given by $\TP_2$ and $\SOP_3$ have consistently thwarted progress in understanding general structural behavior for theories without the strict order property (i.e. $\NSOP$ theories). On the other hand, all of the known examples of $\NSOP$ theories are either simple or have $\TP_2$; and many (if not most) non-simple examples have $\SOP_3$ as well. Whether these observations will lead to general theorems remains an intriguing open question. The goal of this paper is to develop structural results for a general subclass of $\NSOP$ theories, called \emph{free amalgamation theories}, which are defined by the existence of an abstract ternary notion of independence resembling free amalgamation in relational structures. This subclass will include many well-established examples of theories which are either simple or have $\SOP_3$ and $\TP_2$. The canonical examples are \Fraisse\ limits, closed under free amalgamation, such as the random graph (or \emph{Rado graph}) and generic $K_n$-free graphs (or \emph{Henson graphs}). Other examples, in which free amalgamation is more restricted, include the generic $(K_n+K_3)$-free graphs constructed by Komj\'{a}th \cite{Kom} and Cherlin, Shelah, and Shi \cite{CSS}, as well as a small class of well-behaved Hrushovski constructions. 

The reason for focusing on free amalgamation theories is that a significant majority of the known examples of non-simple $\NSOP$ theories are ``generic" structures with a high level of homogeneity. At present, it is still unclear how to precisely distill the nature of $\NSOP$ homogeneous structures. However, our results will show that homogeneity arising from \emph{free} amalgamation has significant consequences for the model theory of the structure. Moreover, the essential features of free amalgamation can be described by a model theoretic axiomatic framework, which allows cumbersome syntactic analysis to be replaced by smoother ``geometric" arguments. There is currently only one other axiomatic framework which includes examples of $\NSOP$ theories with $\TP_2$ and $\SOP_3$, namely, thorn-forking in rosy theories. However, the class of rosy theories is quite broad, and rosiness alone does not imply the specific instances of good model theoretic behavior that we will obtain here for free amalgamation theories. 

The main results are as follows. We first verify that free amalgamation theories are indeed a subclass of $\NSOP$ theories. In particular, using a similar argument as in unpublished work of Patel \cite{PaSOP4}, we give a short proof that any free amalgamation theory is $\NSOP_4$ (Theorem \ref{thm:NSOP4}). This generalizes Patel's methods to the axiomatic framework, and crystallizes the frequently observed connection between free amalgamation and $\NSOP_4$. This result also overlaps with work of Evans and Wong \cite{EvWo} on  Hrushovski constructions, and  of Shelah and Usvyatsov \cite{SUgroups} on groups.

 We then show that any free amalgamation theory has elimination of hyperimaginaries and weak elimination of imaginaries (see Theorem \ref{thm:WEI}). Using this, we provide new examples of rosy theories, including the class of \Fraisse\ limits closed under free amalgamation, which are superrosy of $U^\thrn$-rank $1$. We also show that the generic $(K_n+K_3)$-free graphs are superrosy of $U^\thrn$-rank $2$ (see Theorem \ref{thm:bowtie}). 

Finally, we analyze the role of simplicity in free amalgamation theories. We show that simplicity coincides with $\NTP_2$ and also with the equivalence of nonforking and algebraic independence (see Theorem \ref{thm:simp}). As a corollary, it follows that any simple free amalgamation theory is modular (in the sense of \cite{Adgeo}). Using the results above on (hyper)imaginaries, we then show that any simple free amalgamation theory is $1$-based (see Corollary \ref{cor:1B}). We also prove that, for modular free amalgamation theories, simplicity coincides with $\NSOP_3$ (see Theorem \ref{thm:NSOP3}). In particular, modular free amalgamation theories form the first example of a general, axiomatically defined class of first-order theories, in which we (nontrivially) obtain the equivalence of simplicity, $\NTP_2$, and $\NSOP_3$ (which, as previously noted, seems to be a much broader phenomenon).

For our main class of motivating examples, this results in the following fairly complete analysis of model theoretic behavior.

\begin{theorem}
Suppose $\cM$ is a countable ultrahomogeneous structure, in a finite relational language, whose age is closed under free amalgamation of $\cL$-structures. Let $T=\Th(\cM)$.
\begin{enumerate}[$(a)$]
\item $T$ has elimination of hyperimaginaries and weak elimination of imaginaries, and is rosy with $U^\thrn(T)=1$.
\item $T$ is $\NSOP_4$. Moreover, the following are equivalent.
\begin{enumerate}[$(i)$]
\item $T$ is simple.
\item $T$ is $\NTP_2$.
\item $T$ is $\NSOP_3$.
\end{enumerate}
\item If $T$ is simple then it is supersimple, of $SU$-rank $1$, and $1$-based.
\end{enumerate}
\end{theorem}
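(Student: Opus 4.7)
The plan is to verify that $T := \Th(\cM)$ fits the axiomatic framework of a free amalgamation theory developed earlier in the paper, and then deduce (a)--(c) by invoking the general results listed in the introduction. The natural candidate for the ternary relation is the following: working in a monster $\M \models T$, declare $A \ind_C B$ to hold if and only if $\acl(AC) \cap \acl(BC) = \acl(C)$ and, inside $\acl(ABC)$, no instance of a relation symbol $R \in \cL$ has coordinates spanning both $\acl(AC) \setminus \acl(C)$ and $\acl(BC) \setminus \acl(C)$. This is the ``free amalgamation'' independence of algebraically closed sets over a common algebraically closed base.

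The first real task is to verify that $\ind$ satisfies the axioms of a free amalgamation theory. Since $\cL$ is finite relational and $\cM$ is ultrahomogeneous, orbits under $\Aut(\M/C)$ coincide with quantifier-free types over $\acl(C)$, and $\acl$ is controlled by finitely generated substructures. Invariance, symmetry, monotonicity, normality, and base monotonicity are then formal. Full existence and extension reduce to iteratively gluing finite diagrams along common subsets using free amalgams; closure of the age under free amalgamation guarantees the result lies in the age and so, by the usual \Fraisse\ machinery, embeds into $\cM$. Transitivity follows by the same gluing. Stationarity over algebraically closed sets is immediate from uniqueness of the free amalgam: two freely amalgamated extensions of the same quantifier-free type over $\acl(C)$ are conjugate by an automorphism fixing the common side.

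With $T$ recognised as a free amalgamation theory, part (a) is immediate: elimination of hyperimaginaries and weak elimination of imaginaries come from Theorem \ref{thm:WEI}, and rosiness with $U^\thrn(T) = 1$ is one of the new rosy examples highlighted after Theorem \ref{thm:WEI} in the introduction. In part (b), the $\NSOP_4$ assertion is Theorem \ref{thm:NSOP4}, and the equivalences are assembled from Theorems \ref{thm:simp} and \ref{thm:NSOP3}: simplicity implies $\NTP_2$ and $\NSOP_3$ trivially; $\NTP_2$ implies simplicity by Theorem \ref{thm:simp}; and $\NSOP_3$ implies simplicity by Theorem \ref{thm:NSOP3} once modularity is known, which holds in the \Fraisse\ free-amalgamation setting by the canonical nature of the free amalgam. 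Part (c) then follows: the identification of nonforking with algebraic independence from Theorem \ref{thm:simp} forces supersimplicity and $SU$-rank $1$, and $1$-basedness is Corollary \ref{cor:1B}.

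The main obstacle is the verification of the ternary-relation axioms, specifically the freeness/stationarity clause. In an ultrahomogeneous relational structure, $\acl$ can be strictly larger than substructure closure, so the obvious substructure-level free amalgam must be lifted carefully to the algebraic-closure level; one must argue that the algebraic closure of a freely amalgamated pair $(\acl(AC),\acl(BC))$ is again freely amalgamated over $\acl(C)$, and is unique up to isomorphism over either side. The ancillary subtlety is checking modularity in order to apply Theorem \ref{thm:NSOP3}, which should follow along the same lines, exploiting the canonical nature of the free amalgam over algebraically closed parameter sets.
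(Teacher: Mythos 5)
Your overall strategy is exactly the paper's: show that $\ind^{fa}$ is a free amalgamation relation for $T$ (the paper's Proposition \ref{prop:ex}), then assemble parts $(a)$--$(c)$ from Theorems \ref{thm:WEI}, \ref{thm:NSOP4}, \ref{thm:simp}, \ref{thm:NSOP3}, Corollaries \ref{cor:FArosy} and \ref{cor:1B}, and Propositions \ref{prop:alg} and \ref{prop:triv}. However, you flag as ``the main obstacle'' the problem of lifting the free amalgam from substructures to algebraically closed sets, and you leave both that and the modularity check unresolved. The missing observation that closes both gaps is that for a \Fraisse\ limit of a free amalgamation class in a finite relational language, algebraic closure is \emph{trivial}: $\acl(A)=A$ for all $A$ (this is the standard fact cited from \cite[Chapter 7]{Hobook}). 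With that in hand there is nothing to lift: the closure axiom is automatic, stationarity reduces (via ultrahomogeneity and $\aleph_0$-categoricity) to the uniqueness of the free amalgam of finite substructures, modularity follows from Proposition \ref{prop:alg}$(d)$ since trivial $\acl$ is disintegrated, and $U^\thrn(T)=1$ follows from Proposition \ref{prop:triv} because trivial $\acl$ vacuously satisfies exchange (your appeal to ``one of the new rosy examples in the introduction'' is circular as written --- the rank computation needs Corollary \ref{cor:FArosy} plus Proposition \ref{prop:triv}, not just Theorem \ref{thm:WEI}).

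Two smaller points. First, the axioms you list (``normality, base monotonicity'') are not the ones Definition \ref{def:FAT} requires; the distinctive axioms you must verify are \emph{freedom} and \emph{closure}, of which freedom holds for $\ind^{fa}$ in any relational structure (Proposition \ref{ISF}) and closure is handled as above. Second, for part $(c)$ the step from ``nonforking equals algebraic independence'' to supersimplicity of $SU$-rank $1$ again uses local finiteness (here triviality) of $\acl$ via Proposition \ref{prop:alg}$(b)$; it is not a formal consequence of Theorem \ref{thm:simp} alone. None of this changes the architecture of your argument, but as written the central verification is announced rather than performed.
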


The statements in this theorem are consequences of the various main results in this paper, which are shown for the more general class of free amalgamation theories (Definition \ref{def:FAT}). Given $\cM$ as in the theorem, the justification that $\Th(\cM)$ is a free amalgamation theory is done in Proposition \ref{prop:ex}. Several parts of the theorem also require the observation that $\Th(\cM)$ is modular, which follows from the fact that algebraic closure in $\cM$ is trivial (see Proposition \ref{prop:alg}$(d)$). Part $(a)$, which answers questions posed to us by Cameron Hill and Vera Koponen, combines Theorem \ref{thm:WEI}, Corollary \ref{cor:FArosy}, and Proposition \ref{prop:triv}. Part $(b)$ combines Theorems \ref{thm:NSOP4}, \ref{thm:simp}, and \ref{thm:NSOP3}. Part $(c)$ uses Corollary \ref{cor:1B} to conclude $T$ is $1$-based, and uses the description of forking given by Theorem \ref{thm:simp} to conclude $T$ is supersimple SU-rank $1$ (it is also a general fact that $U$-rank and $U^{\thrn}$-rank coincide for supersimple theories \cite[Theorem 5.1.4]{OnTH}). Part $(c)$ also provides progress toward a question of Koponen \cite{Kop1B} on whether \emph{any} countable, simple, ultrahomogeneous structure, in a finite relational language, is $1$-based. We again emphasize that $\NSOP_4$ in part $(b)$ was first proved by Patel \cite{PaSOP4}. After obtaining our results, we later found that weak elimination of imaginaries in part $(a)$ also follows from \cite[Lemma 2.7]{MacTe}.

The final result of the paper, Theorem \ref{thm:irred}, is a combinatorial characterization of simplicity for $\Th(\cM)$, given in terms of \emph{irreducibility} of forbidden substructures, for certain $\cM$ as in the theorem above. The proof uses a generalization of a result of Hrushovski \cite{Udibook} on the generic $K^r_n$-free $r$-hypergraphs (with $r>2$). 

\subsection*{Acknowledgements} 

I would like to thank David Evans, Vera Koponen, Alex Kruckman, Maryanthe Malliaris, and Rehana Patel for many fruitful conversations. I also thank the referee for several improvements to the exposition, and Vera Koponen for pointing out an error in an earlier version.

\section{Notation and Definitions}

Fix a complete first-order theory $T$ and a $\kappa$-saturated monster model $\M$ of $T$, for $\kappa$ sufficiently large. We write $A\subset\M$ to mean $A\seq\M$ and $|A|<\kappa$. Given $A,B\subset\M$, we let $AB$ denote $A\cup B$. We use singletons $a,b,c,x,y,z,\ldots$ to denote tuples of length $<\kappa$. Given a tuple $a$, we let $\ell(a)$ denote the length (or domain) of $a$ and, abusing notation, we identify $a$ with the subset of $\M$ given by the range of $a$. We write $a\in\M$ to mean $a$ is a tuple of elements from $\M$. When the domain of the tuple is important, we may emphasize this by writing $a\in\M^I$. Given an automorphism $\sigma\in\Aut(\M)$ and a tuple $a=(a_i:i\in I)\in\M^I$, we let $\sigma(a)$ denote the tuple $(\sigma(a_i):i\in I)$. Given tuples $a,b\in\M$, and $C\subset\M$, we write $a\equiv_C b$ if $a,b\in\M^I$, for some common domain $I$, and $\sigma(a)=b$ for some $\sigma\in\Aut(\M/C)$.

In many cases, we index \emph{sequences} of tuples with subscripts (e.g. $(a_i)_{i<\omega}$, where each $a_i$ is a tuple). Therefore, in situations where we also want to reference the specific coordinates of the tuples in such a sequence, we will use superscripts to index tuples and subscripts to index coordinates (e.g. $(a^l)_{l<\omega}$ with $a^l=(a^l_i:i\in I)$).

Suppose $a\in\M^I$ is a tuple with domain $I$. A \textit{subtuple} of $a$ is a tuple of the form $a_J:=(a_i:i\in J)$, where $J$ is a subset of $I$. We write $c\sqsubset a$ to denote that $c$ is a subtuple of $a$. Given an indiscernible sequence $\cI=(a^l)_{l<\omega}$, we define the \textit{common intersection} of $\cI$ to be the (possibly empty) subtuple $a^0_J\sqsubset a^0$, where $J=\{i\in I:a^0_i=a^1_i\}$. 

Let $\acl$ denote algebraic closure in $\M$; $A\subset\M$ is \textit{closed} if $\acl(A)=A$. We say:
\begin{enumerate}[(1)]
\item  $\acl$ is \textit{locally finite} if $\acl(A)$ is finite for all finite $A\subset\M$;
\item $\acl$ is  \textit{disintegrated} if, for all $A\subset\M$, $\acl(A)=\bigcup\{\acl(a):a\in A\text{ is a singleton}\}$; 
\item $\acl$ is \textit{trivial} if $\acl(A)=A$ for all $A\subset\M$.
\end{enumerate}

We now define axioms of abstract ternary relations on (small subsets of) $\M$. Some axioms have been slightly adjusted from their standard formulations, and incorporate algebraic closure of the small subsets in question. 

\begin{definition}\label{axioms}
Given a ternary relation $\ind$ on $\M$, define the following axioms. 
\begin{enumerate}[$(i)$]
\item \textit{(invariance)} For all $A,B,C$, if $A\ind_C B$ and $\sigma\in\Aut(\M)$ then $\sigma(A)\ind_{\sigma(C)}\sigma(B)$.
\item \textit{(monotonicity)} For all $A,B,C$, if $A\ind_C B$, $A_0\seq A$, and $B_0\seq B$, then $A_0\ind_C B_0$.
\item \textit{(symmetry)} For all $A,B,C$, if $A\ind_C B$ then $B\ind_C A$.
\item \textit{(full transitivity)} For all $A$ and $D\seq C\seq B$, $A\ind_D B$ if and only if $A\ind_C B$ and $A\ind_D C$.
\item \textit{(full existence)} For all $B,C\subset\M$ and tuples $a\in\M$, if $C$ is closed then there is $a'\equiv_C a$ such that $a'\ind_C B$.
\item \textit{(stationarity)} For all closed $C\subset\M$ and closed tuples $a,a',b\in\M$, with $C\seq a\cap b$, if $a\ind_C b$, $a'\ind_C b$, and $a'\equiv_C a$, then $ab\equiv_C a'b$.
\item \textit{(freedom)} For all $A,B,C,D$, if $A\ind_C B$ and $C\cap AB\seq D\seq C$, then $A\ind_D B$.
\item \textit{(closure)} For all closed $A,B,C$, if $C\seq A\cap B$ and $A\ind_C B$ then $AB$ is closed.
\end{enumerate}
\end{definition}

There is a significant body of literature concerning axioms of ternary notions of independence. An excellent introduction  can be found in \cite{Adgeo}.  The choice of axioms in Definition \ref{axioms} also borrows heavily from Tent and Ziegler's work with \textit{stationary independence relations} \cite{TeZiSIR}, and so we give the following adaptation of their definition to the present context.

\begin{definition}\label{def:SIR}
A ternary relation $\ind$ is a  \textbf{stationary independence relation} for $T$ if it satisfies invariance, monotonicity, symmetry, full transitivity, full existence, and stationarity.
\end{definition}

Several comments are warranted at this point. First, Tent and Ziegler's definition in \cite{TeZiSIR} is formulated for finite subsets of a countable structure, and does not include any closure assumptions in the full existence or stationarity axioms. Moreover, the clause ``$C\seq a\cap b$" is not present in their formulation of stationarity. The main examples in \cite{TeZiSIR} have trivial algebraic closure and, in such cases, one may show that the two notions of a stationary independence relation are the same. In \cite{EGT}, Evans, Ghadernezhad, and Tent also consider axioms of ternary relations, which have been relativized to the lattice of algebraically closed sets. 

The clause ``$C\seq a\cap b$" in the stationarity axiom will be necessary in the subsequent work. On the other hand, one may easily show that, if  $\ind$ is a ternary relation satisfying monotonicity, then the full existence axiom is equivalent to the version obtained by adding ``$C\seq a\cap B$" to the assumptions. We will tacitly use this observation when discussing examples in the next section.

Finally, we point out that Tent and Ziegler \cite{TeZiSIR} do formulate the freedom axiom (although they do not give it a name). This axiom is also very close to Hrushovski's notion of CM-triviality \cite{HrSMS}. 

We now define the central notion of this paper.

\begin{definition}\label{def:FAT}
A ternary relation is a \textbf{free amalgamation relation} if it satisfies invariance, monotonicity, symmetry, full transitivity, full existence, stationarity, freedom, and closure. $T$ is a \textbf{free amalgamation theory} if it has a free amalgamation relation.
\end{definition}

The main results of this paper concern properties of free amalgamation theories. The reader will notice that some results do not, in and of themselves, require all parts of the previous definition. Therefore, to obtain the conclusion of a particular result, it may not be necessary for $T$ to have a ternary relation satisfying every ingredient of Definition \ref{def:FAT}.

\section{Examples}\label{sec:ex}

In order to state the motivating examples of free amalgamation theories, we must first define the notion of free amalgamation of relational structures, which gives rise to the canonical example of a ternary relation satisfying the freedom axiom. Given a relational language $\cL$ and $\cL$-structures $A,B,C$, we write $A\cong_C B$ if there is an $\cL$-isomorphism from $AC$ to $BC$, which fixes $C$ pointwise.

\begin{definition}\label{def:FA}
Assume $\cL$ is relational. Given an $\cL$-structure $\cM$ and $A,B,C\seq \cM$, we set $A\ind^{fa}_C B$ (in $\cM$) if $A\cap B\seq C$ and, for all $R\in\cL$ and $a\in ABC$ (of length the arity of $R$), if $R(a)$ holds then $a\in AC$ or $a\in BC$. 
\end{definition}

To mitigate possible confusion, we emphasize that we are now using the phrase ``free amalgamation" in two different ways. In particular, when we say $\ind$ is a ``free amalgamation relation", or $T$ is a ``free amalgamation theory", we mean with respect to the definition involving abstract axioms of ternary relations. When considering structures in a relational language, we will use ``free amalgamation of relational structures" (or ``free amalgamation of $\cL$-structures") when referring to the notion of freely amalgamating such structures as in the previous definition. 

\begin{example}\label{ex}$~$
\begin{enumerate}
\item\label{ex1}\textbf{\Fraisse\ limits with free amalgamation.} Let $\cL$ be a finite relational language and let $\cM$ be the \Fraisse\ limit of a \Fraisse\ class $\cK$ of finite $\cL$-structures, which is \emph{closed under free amalgamation of $\cL$-structures}, i.e., for all $A,B,C\in\cK$, with $C\seq A\cap B$, there is $D=A'B'\in\cK$ such that $A'\cong_C A$, $B'\cong_C B$, and $A'\ind^{fa}_C B'$ (in $D$). In this case, $\Th(\cM)$ is $\aleph_0$-categorical and $\acl$ is trivial (see \cite[Chapter 7]{Hobook}). We give a few examples. 
\begin{enumerate}[$(i)$]
\item If $\cK$ is the class of graphs, then $\cM$ is the \emph{random graph} or \emph{Rado graph}.
\item Given fixed $n>r\geq 2$, let $\cK$ be the class of $K^r_n$-free $r$-hypergraphs, where $K^r_n$ is the complete $r$-hypergraph on $n$ vertices, considered in the $r$-hypergraph language containing an $r$-ary relation symbol. Then $\cM$ is the \emph{generic $K^r_n$-free $r$-hypergraph}. When $r=2$, we also refer to $\cM$ as the \emph{generic $K_n$-free graph} or \emph{Henson graph}.
\item Let $\cK$ be the class of finite metric spaces, with distances in $\{0,1,2,3\}$, in the language $\cL=\{d_1(x,y),d_3(x,y)\}$ where, for $r\in\{1,3\}$, $d_r(x,y)$ is a binary relation interpreted as $d(x,y)=r$. If $A,B,C,D\in\cK$, with $C\seq A\cap B$ and $AB\seq D$, then $A\ind^{fa}_C B$ (in $D$) if and only if $d(a,b)=2$ for all $a\in A\backslash C$ and $b\in B\backslash C$. By the triangle inequality, $\cK$ is closed under free amalgamation of $\cL$-structures. The \Fraisse\ limit $\cM$ is the \emph{Urysohn space with spectrum $\{0,1,2,3\}$}. This structure is also called the \emph{free third root of the complete graph} by Casanovas and Wagner in \cite{CaWa}. Note that $\ind^{fa}$ is \emph{not} the usual free amalgamation of metric spaces, which is the stationary independence relation used by Tent and Ziegler \cite{TeZiSIR} in their analysis of the rational Urysohn space. In general, free amalgamation of metric spaces fails the freedom axiom.
\end{enumerate}

\item\label{ex2}\textbf{Generic $(K_n+K_3)$-free graph.} Fix $n\geq 3$ and let $K_n+K_3$ be the graph obtained by freely amalgamating $K_n$ and $K_3$ over a single vertex. In \cite{CSS}, Cherlin, Shelah, and Shi construct the unique countable, universal, existentially closed $(K_n+K_3)$-free graph, which we denote $\cG_n$ ($\cG_3$ was originally constructed by Komj\'{a}th \cite{Kom}). For any $n\geq 3$, $\Th(\cG_n)$ is $\aleph_0$-categorical and $\acl$ is disintegrated (see \cite{CSS}). However, the age of $\cG_n$ is \emph{not} closed under free amalgamation of arbitrary relational structures (e.g. $K_n+K_3$ itself is obtained as the free amalgamation of two $(K_n+K_3)$-free graphs). Accordingly, the age of $\cG_n$ is not a \Fraisse\ class in the graph language. However, it is shown in \cite{PaSOP4} that this class is closed under free amalgamation of relational structures over \emph{algebraically closed} base structures.

\item\label{ex3}\textbf{``Freely disintegrated" Hrushovski constructions.} Let $\cL$ be a finite relational language, and let $\cM_f$ be the Hrushovski generic produced from a class $(\cK_f,\leq)$ of finite structures closed under free amalgamation of strong substructures, where $f$ is a ``good" control function (see \cite{Evans}, \cite{EvWo} for details). In this case, $\Th(\cM_f)$ is $\aleph_0$-categorical, but $\ind^{fa}$ does not necessarily satisfy the closure axiom, and so we must separately impose this assumption. Note that, if $\acl$ is disintegrated and $A,B$ are closed, then $AB$ is closed as well. Therefore, the closure axiom for $\ind^{fa}$ is asserting that $\acl$ is ``freely disintegrated". It will follow from results in Section \ref{sec:simple} that any simple Hrushovski construction satisfying these assumptions is modular, and so this framework is not suitable for the well-known non-modular Hrushovski counterexamples.
\end{enumerate}
\end{example}

We will show that if $\cM$ is one of the countable structures defined in Example \ref{ex}, then $\Th(\cM)$ is a free amalgamation theory. First, we note that in any relational structure, the ternary relation $\ind^{fa}$ always satisfies several of our axioms (most importantly, $\ind^{fa}$ satisfies freedom).  

\begin{proposition}\label{ISF}
Assume $\cL$ is relational and $\cM$ is an $\cL$-structure. Then $\ind^{fa}$ satisfies invariance, monotonicity, symmetry, full transitivity, and freedom (in $\cM$).
\end{proposition}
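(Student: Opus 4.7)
My plan is to verify each axiom directly from the definition. The key observation is that $A\ind^{fa}_C B$ bundles two separate conditions---the \emph{intersection condition} $A\cap B\seq C$ and the \emph{relational condition} that any $a\in ABC$ on which some $R\in\cL$ holds must lie in $AC$ or $BC$---and each axiom verification reduces to checking these two clauses separately. Invariance is immediate because automorphisms preserve primitive relations, intersections, and unions; symmetry is immediate because both clauses are symmetric in $A$ and $B$.

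For monotonicity, given $A_0\seq A$ and $B_0\seq B$, the intersection clause $A_0\cap B_0\seq C$ is clear. For the relational clause, a tuple $a\in A_0B_0C$ with $R(a)$ lies in $ABC$, hence in $AC$ or $BC$. I expect the only real point to be descending from $AC$ to $A_0C$: any coordinate of $a$ which lies in $A\cap B_0$ also lies in $A\cap B\seq C$, so after this rewriting every coordinate of $a$ sits in $A_0\cup C$. For freedom the argument is parallel: given $A\ind^{fa}_C B$ and $C\cap AB\seq D\seq C$, a relational tuple $a\in ABD\seq ABC$ which (without loss of generality) lies in $AC$ has its coordinates in $C\setminus A$ forced into $AB\cap C\seq D$, so $a\in AD$.

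Full transitivity requires a two-direction chase (with $D\seq C\seq B$). For the ``$\Leftarrow$'' direction, $A\cap B\seq C$ together with $A\cap C\seq D$ gives $A\cap B\seq D$, and a relational tuple on $ABD\seq AB$ is first placed in $AC$ or $B$ using $A\ind^{fa}_C B$, and in the former case then pushed into $AD$ or $C\seq B$ using $A\ind^{fa}_D C$. The ``$\Rightarrow$'' direction uses the collapses $ABC=AB$ and $ACD=AC$ provided by $D\seq C\seq B$, after which the relational clause falls out. Overall the proof is essentially set-theoretic bookkeeping, with no model-theoretic content beyond the definition; the only place to trip is forgetting to invoke the intersection condition at the precise moment, which is exactly what makes monotonicity and freedom work and is the substance of the argument.
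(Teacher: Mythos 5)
Your verification is correct, and it supplies exactly the routine set-theoretic bookkeeping the paper intends: the paper states that the proof is straightforward and leaves it to the reader. The one substantive point in each of monotonicity, freedom, and transitivity is indeed the use of the intersection clause ($A\cap B\seq C$, resp.\ $C\cap AB\seq D$) to relocate stray coordinates, and you identify and execute this correctly in every case.
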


The proof is straightforward, and left to the reader. With this result, we see that in order to use $\ind^{fa}$ to obtain a free amalgamation relation for the previous examples, the key axioms to verify are \emph{full existence}, \emph{stationarity}, and \emph{closure}.

\begin{proposition}\label{prop:ex}
Suppose $\cM$ is one of the countable structures described in Example \ref{ex}. Then $\ind^{fa}$ is a free amalgamation relation for $\Th(\cM)$.
\end{proposition}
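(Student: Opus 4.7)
The plan is to fix one of the three structures $\cM$ from Example~\ref{ex} and verify, for each, the three axioms not already covered by Proposition~\ref{ISF}: full existence, stationarity, and closure. Invariance, monotonicity, symmetry, full transitivity, and freedom come for free from the proposition, since $\cM$ lives in a relational language.

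I would dispense with the closure axiom first, since it is immediate in each case. In case~(\ref{ex1}), $\acl$ is trivial, so every set is closed and the conclusion is vacuous. In case~(\ref{ex2}), $\acl$ is disintegrated by \cite{CSS}; if $A,B$ are closed then $\acl(AB)=\bigcup_{c\in AB}\acl(c)=\acl(A)\cup\acl(B)=A\cup B=AB$, so $AB$ is closed. In case~(\ref{ex3}), closure is precisely the hypothesis that the algebraic closure is ``freely disintegrated,'' so there is nothing to prove.

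Next I would handle full existence, which in all three cases uses the same recipe. Given closed $C$, arbitrary $B$, and a tuple $a$, I would first replace $aC$ and $BC$ by their algebraic closures $A_0:=\acl(aC)$ and $B_0:=\acl(BC)$ (closed under the assumption/conclusion above), then build an abstract free amalgamation $D$ of $A_0$ and $B_0$ over $C$ inside the ambient class, and finally embed $D$ back into $\M$ over $B_0$ by $\aleph_0$-categoricity and universality/ultrahomogeneity to obtain $a'$. The image of $a$ under this embedding satisfies $a'\equiv_C a$ and $a'\ind^{fa}_C B$. What makes the recipe work is different in each case: in (\ref{ex1}) it is the hypothesis that $\cK$ is closed under free amalgamation of $\cL$-structures; in (\ref{ex2}) it is Patel's observation (cited in Example~\ref{ex}) that the age of $\cG_n$ is closed under free amalgamation over algebraically closed bases, which is precisely what we need since $C$, $A_0$, $B_0$ are closed; in (\ref{ex3}) it is the fact that $(\cK_f,\leq)$ is closed under free amalgamation of strong substructures, combined with the identification of self-sufficient closure with $\acl$ in the ``freely disintegrated'' setting.

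Stationarity is then essentially the uniqueness half of the same amalgamation principle. Given closed $a,a',b$ with $C\seq a\cap b$, $a\equiv_C a'$, and $a\ind^{fa}_C b$, $a'\ind^{fa}_C b$, both $ab$ and $a'b$ are free amalgamations of isomorphic $C$-structures $a\cong_C a'$ with the common factor $b$ (the hypothesis $C\seq a\cap b$ ensures $C$ is genuinely the shared part, so no additional identifications arise). Uniqueness of free amalgamation in each respective class produces an $\cL$-isomorphism $ab\to a'b$ fixing $b$ and sending $a$ to $a'$; ultrahomogeneity of $\cM$ lifts this to an automorphism and gives $ab\equiv_C a'b$.

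I expect the main obstacle to be case~(\ref{ex3}): keeping the algebraic closure, self-sufficient closure, and ``strong'' substructure relation properly aligned, and checking that the free amalgamation of strong substructures inside the Hrushovski class does in fact witness $\ind^{fa}$ in $\M$ (as opposed to merely a class-internal amalgam). Case~(\ref{ex2}) is a close second, since one must invoke Patel's closure-over-closed-bases lemma at exactly the right step rather than the naive closure under free amalgamation of graphs, which fails for $(K_n+K_3)$.
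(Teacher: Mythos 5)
Your proposal follows essentially the same route as the paper: dispose of closure via triviality/disintegration/assumption, get full existence from the relevant amalgamation-closure hypothesis for each class, and get stationarity by producing an $\cL$-isomorphism $ab\cong_C a'b$ and extending it to an automorphism. The one place where your wording would not survive literal scrutiny is the appeal to ``ultrahomogeneity of $\cM$'' in the stationarity step (and implicitly in the $a'\equiv_C a$ step of full existence): the structures in Examples \ref{ex}.\ref{ex2} and \ref{ex}.\ref{ex3} are \emph{not} ultrahomogeneous in the ordinary sense, and an isomorphism between arbitrary finite substructures need not extend to an automorphism. The correct fact, and the one the paper invokes, is that isomorphisms between finite \emph{closed} substructures extend to automorphisms; this is exactly why the paper first observes that $ab$ and $a'b$ are themselves closed, as a consequence of the closure axiom already verified. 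Since you do verify closure first, your argument is repaired by citing that fact in place of ultrahomogeneity, but as written the step is stated for the wrong class of partial isomorphisms.
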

\begin{proof}
We need to verify that $\ind^{fa}$ satisfies existence, stationarity, and closure. By $\aleph_0$-categoricity, it suffices to work with finite subsets of $\cM$. In each example, the closure axiom is either by assumption or follows from the fact that $\acl$ is disintegrated, and so the the union of two closed sets is closed. The existence axiom for \ref{ex}.\ref{ex1} and \ref{ex}.\ref{ex3} is by assumption, and is shown for \ref{ex}.\ref{ex2} in \cite{PaSOP4}. For stationarity, fix finite, closed $C\subset\cM$ and $a,a',b\in\cM$ such that $C\seq a\cap b$, $a\ind^{fa}_C b$, $a'\ind^{fa}_C b$ and $a\equiv_C a'$. We want to show $a'b\equiv_C ab$. From $a'\equiv_C a$, $a\ind^{fa}_C b$, and $a'\ind^{fa}_C b$, it follows that $ab\cong_C a'b$. Moreover, $ab$ and $a'b$ are closed since $\ind^{fa}$ satisfies closure. Therefore, $a'b\equiv_C ab$ follows from the fact that any $\cL$-isomorphism between finite closed subsets of $\cM$ extends to an automorphism of $\cM$ (see \cite{Hobook}, \cite{PaSOP4}, \cite{EvWo} for, respectively, \ref{ex}.\ref{ex1}, \ref{ex}.\ref{ex2}, \ref{ex}.\ref{ex3}). 
\end{proof}

The interested reader should consult the sources mentioned in the previous proof to find explicit descriptions of algebraic closure in the three families of examples. We also remark that the assumption of a finite language in Examples \ref{ex}.\ref{ex1} and \ref{ex}.\ref{ex3} is there to ensure $\aleph_0$-categoricity and the appropriate level of quantifier elimination. This assumption can be weakened slightly to encompass countable relational languages with only finitely many relations of any given arity, provided that we restrict to structures in which the interpretation of any relation is irreflexive.

In the proof of Proposition \ref{prop:ex}, we used the closure axiom to prove stationarity. We will not explicitly use the closure axiom again until Section \ref{sec:simple}.

\section{NSOP\textsubscript{4}}

In this section, we show that free amalgamation theories form a subclass of first-order theories without the strict order property. In fact, we prove these theories are $\NSOP_4$. This has been shown for each of the examples in the previous section by collective work of several authors including Shelah  \cite{Sh500}, Hrushovski \cite{Udibook}, Evans \& Wong \cite{EvWo}, Patel \cite{PaSOP4}, and joint work with Terry \cite{CoTe}. The most general argument in this direction can be found in unpublished work of Patel \cite{PaSOP4}, which proves $\NSOP_4$ for Example \ref{ex}.\ref{ex1} and Example \ref{ex}.\ref{ex2}. Our argument for $\NSOP_4$, while slightly simpler and more general, is very close to Patel's work. 

We continue to fix a first-order theory $T$ and a monster model $\M$. We begin with the definition of $\SOP_n$. 

\begin{definition}\label{SOPn}
Given $n\geq 3$, $T$ has the \textbf{$n$-strong order property}, $\SOP_n$, if there is an indiscernible sequence $(a_i)_{i<\omega}$ such that, if $p(x,y)=\tp(a_0,a_1)$, then
$$
p(x_1,x_2)\cup\ldots\cup p(x_{n-1},x_n)\cup p(x_n,x_1)
$$
is inconsistent. We say $T$ is $\NSOP_n$ if it does not have the $n$-strong order property.
\end{definition}

\begin{remark}
These properties were originally defined in \cite{Sh500} to enrich the classification of unstable theories. It is fairly straightforward to show that if $T$ has the strict order property, then it has $\SOP_n$ for all $n\geq 3$. Given $n\geq 3$, if $T$ has $\SOP_{n+1}$ then it has $\SOP_n$. Moreover, if $T$ has $\SOP_3$ then it is unstable. Indeed, if one were to interpret Definition \ref{SOPn} with $n=2$ then, as a property of $T$, the result would be equivalent to the order property.\footnote{We caution that $\SOP_2$ is \emph{not} defined this way, but rather as a variant of the tree property.} See \cite{KiKi}, \cite{Sh500}.
\end{remark}

We now return to free amalgamation theories. The following easy observation will be very useful.

\begin{lemma}\label{switch}
Suppose $\ind$ satisfies invariance, symmetry, and stationarity. Suppose $C\subset\M$ is closed and $a,b\in\M$ are closed tuples, with $C\seq a\cap b$, $a\equiv_C b$, and $a\ind_C b$. Then $ab\equiv_C ba$.
\end{lemma}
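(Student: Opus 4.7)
The plan is to produce a $C$-automorphism of $\M$ that swaps $a$ and $b$, by composing two $C$-automorphisms: the first built from the assumption $a\equiv_C b$, the second supplied by stationarity.

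First I would use $a\equiv_C b$ to fix some $\sigma\in\Aut(\M/C)$ with $\sigma(a)=b$, and set $c:=\sigma(b)$. Because $\sigma$ fixes $C$ pointwise and $C\seq b$, we get $C\seq c$; because $b$ is a closed tuple and $\sigma$ is an automorphism, $c$ is closed as well. The automorphism $\sigma$ maps the pair $(a,b)$ to $(b,c)$, so $ab\equiv_C bc$. Now apply invariance to $a\ind_C b$ with the automorphism $\sigma$ to obtain $b\ind_C c$, and then use symmetry to conclude $c\ind_C b$.

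Next I would invoke stationarity with $b$ as the ``right-hand'' tuple and with $c,a$ playing the roles of the first tuple and of $a'$, respectively. The hypotheses are straightforward to verify from what is already in hand: $C$ is closed; the tuples $a,b,c$ are closed; $C\seq c\cap b$ holds by the previous paragraph; $c\ind_C b$ and $a\ind_C b$ are given; and $a\equiv_C c$ follows from $a\equiv_C b$ (by assumption) and $b\equiv_C c$ (via $\sigma$, which fixes $C$). Stationarity then yields $cb\equiv_C ab$, i.e.\ some $\tau\in\Aut(\M/C)$ with $\tau(c)=a$ and $\tau(b)=b$.

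Finally, the composition $\tau\circ\sigma\in\Aut(\M/C)$ sends $a\mapsto\tau(b)=b$ and $b\mapsto\tau(c)=a$, which is exactly the automorphism witnessing $ab\equiv_C ba$. The one nontrivial move is spotting the auxiliary tuple $c:=\sigma(b)$: this is what places the symmetric configuration into a form to which stationarity can be applied. Once $c$ is introduced, every other step is a routine invocation of invariance, symmetry, or composition of automorphisms, so I do not anticipate a serious obstacle.
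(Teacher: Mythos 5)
Your proof is correct and follows essentially the same route as the paper: your auxiliary tuple $c=\sigma(b)$ is exactly the paper's $a'$ with $ab\equiv_C ba'$, and both arguments then apply invariance, symmetry, and stationarity in the same way. The only difference is that you spell out the automorphisms explicitly, which the paper leaves implicit.
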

\begin{proof}
Since $a\equiv_C b$, there is $a'\in\M$ such that $ab\equiv_C ba'$. By invariance and symmetry we have $a'\ind_C b$. Then $ba'\equiv_C ba$ by stationarity.
\end{proof}

\begin{theorem}\label{thm:NSOP4}
If $T$ is a free amalgamation theory then $T$ is $\NSOP_4$.
\end{theorem}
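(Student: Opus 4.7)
Suppose for contradiction that $T$ has $\SOP_4$, witnessed by an indiscernible sequence $(a_i)_{i<\omega}$ with $p(x,y) := \tp(a_0, a_1)$, so that the cyclic partial type $\Sigma(x_1,x_2,x_3,x_4) := p(x_1,x_2) \cup p(x_2,x_3) \cup p(x_3,x_4) \cup p(x_4,x_1)$ is inconsistent. The plan is to realize $\Sigma$ in $\M$, yielding a contradiction. Set $C := \acl(\emptyset)$ and, by replacing each $a_i$ with $\acl(a_i \cup C)$, reduce to the case where each $a_i$ is closed and contains $C$; the new sequence remains indiscernible, and a 4-cycle for its enlarged 2-type restricts to one for $p$, so this reduction is harmless.

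The idea is to build four closed tuples $b_1, b_2, b_3, b_4 \in \M$ with consecutive $p$-edges, and then close the fourth edge via the switch lemma (Lemma \ref{switch}). I set $b_1 := a_0$, $b_2 := a_1$, $b_3 := a_2$, so that indiscernibility gives $p(b_1, b_2)$ and $p(b_2, b_3)$. Using full existence of $\ind$, I choose a closed $b_4$ with $(b_3, b_4) \equiv_{\emptyset} (a_0, a_1)$, hence $p(b_3, b_4)$, and $b_4 \ind_{b_3} b_1 b_2$.

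To argue $p(b_4, b_1)$, I proceed as follows. By monotonicity, $b_4 \ind_{b_3} b_1$. Since distinct closed members of an indiscernible sequence intersect in $C$, and $b_4$ was chosen freshly over $b_3$, one has $b_3 \cap b_1 b_4 \seq C$; the freedom axiom then yields $b_4 \ind_C b_1$. As $b_1, b_4$ are closed tuples containing $C$ with $b_1 \equiv_C b_4$ (both being $C$-conjugate to $a_0$), Lemma \ref{switch} applies and produces $b_4 b_1 \equiv_C b_1 b_4$; that is, $\tp(b_4, b_1) = \tp(b_1, b_4)$.

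The remaining step — and the one I expect to be the main obstacle — is to identify $\tp(b_1, b_4)$ with $p$. After the switch, $\tp(b_1, b_4)$ is a priori only pinned down as the type of two $C$-independent $C$-conjugate tuples, and need not coincide with $p$ (in general it could be a different "generic independent" 2-type). To force the identification $\tp(b_1, b_4) = p$, the plan is to track this 2-type through the construction using stationarity based at $b_3$, so that indiscernibility recognizes it as $\tp(a_0, a_3)$, and then to apply a further symmetrization (a second use of freedom together with switch, possibly after strengthening the independence of $b_2, b_3$ via an additional application of full existence) to collapse $\tp(a_0, a_3)$ onto $p$. Once the identification is in place, $p(b_4, b_1)$ holds and $(b_1, b_2, b_3, b_4)$ realizes $\Sigma$, contradicting $\SOP_4$. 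This interplay of full existence, freedom, stationarity, and switch is the axiomatic analogue of Patel's explicit relational construction of a $p$-cycle by free amalgamation of $p$-pairs.
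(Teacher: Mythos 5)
Your construction has a genuine gap at exactly the step you flag, and the obstacle is not one that further bookkeeping can remove. Once you arrange $b_4\ind_c b_1$ (here the base must be the common intersection $c$ of the sequence, which is in general strictly larger than $\acl(\emptyset)$), the stationarity axiom \emph{completely determines} $\tp(b_4/b_1c)$ from $\tp(b_4/c)$: it is the unique $\ind$-free extension of that type. Hence $\tp(b_1,b_4)$ is the ``independent'' $2$-type of two $c$-conjugates of $a_0$, and this coincides with $p$ if and only if $a_0\ind_c a_1$, i.e.\ if and only if the original indiscernible sequence was already $\ind$-independent over $c$ --- precisely the case in which there is nothing to prove. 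For a genuinely dependent edge type (adjacency in the Henson graph, say) the independent $2$-type is a different type, and no tracking through stationarity or further symmetrization converts it into $p$; note also that $\tp(a_0,a_3)=p$ holds automatically by indiscernibility, so your proposed intermediate target gains nothing. The structural defect is that your cycle $b_1b_2b_3b_4$ places the forced independence on an \emph{edge} ($b_4b_1$), where it collides with the prescribed type $p$.

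The paper's proof avoids this by a different architecture: it uses only \emph{two} vertices coming from the original sequence and places the independence on a \emph{diagonal} of the $4$-cycle. After replacing $a_i$ by $b_i=\acl(a_i)$ and passing to an indiscernible realization of the EM-type (your ``the new sequence remains indiscernible'' also needs this step, since coherent enumerations of the algebraic closures need not be indiscernible), one uses full existence to find $b_0^*\equiv_{b_1}b_0$ with $b_0^*\ind_{b_1}b_2$, hence $b_0^*\ind_c b_2$ by freedom. The path $b_0^*\,b_1\,b_2$ carries two $q$-edges, and its two \emph{endpoints} are independent over $c$. Lemma \ref{switch} then gives an automorphism over $c$ sending $b_0^*b_2$ to $b_2b_0^*$; letting $b_1^*$ be the image of $b_1$, the image path $b_2\,b_1^*\,b_0^*$ carries two more $q$-edges, and gluing the two paths along $\{b_0^*,b_2\}$ closes the $4$-cycle. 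No edge of the cycle is ever required to be $\ind$-independent, which is why the argument works for arbitrary, possibly dependent, indiscernible sequences. To repair your version you would have to give up one of $b_1,b_2,b_3$ and obtain the fourth vertex as the switch-image of a middle vertex, rather than by a fresh application of full existence.
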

\begin{proof}
Fix an indiscernible sequence $(a_i)_{i<\omega}$ and let $p(x,y)=\tp(a_0,a_1)$. We want to show 
\begin{equation*}
\text{$p(x_1,x_2)\cup p(x_2,x_3)\cup p(x_3,x_4)\cup p(x_4,x_1)$ is consistent.}\tag{$\dagger$}
\end{equation*}

Let $a'_0$ be such that $\acl(a_0)=a_0a'_0$. Given $i<\omega$ there is $\sigma_i\in\Aut(\M)$ such that $\sigma_i(a_0)=a_i$. Let $a'_i=\sigma_i(a'_0)$. Then $b_i:=\acl(a_i)=a_ia'_i=\sigma_i(a_0a'_0)$. Note that, for any $i<j<\omega$, if $q_{i,j}(xx',yy')=\tp(b_i,b_j)$ then $p(x,y)\seq q_{i,j}(xx',yy')$. Therefore, we may replace $(b_i)_{i<\omega}$ by an indiscernible realization of its $\EM$-type and maintain this feature (see, e.g., \cite[Lemma 7.1.1]{TeZi}\footnote{Recall that the $\EM$-type of a sequence $(b_i)_{i<\omega}$, over parameters $A$, is the collection of formulas $\vphi(x_1,\ldots,x_n)$ over $A$, for any $n<\omega$, such that $\vphi(b_{i_1},\ldots,b_{i_n})$ holds for all $i_1<\ldots<i_n<\omega$.}).

Set $q(xx',yy')=q_{0,1}(xx',yy')$. To show $(\dagger)$, we set $z_i=x_ix_i'$ and show
\begin{equation*}
\text{$q(z_1,z_2)\cup q(z_2,z_3)\cup q(z_3,z_4)\cup q(z_4,z_1)$ is consistent.}\tag{$\dagger\dagger$}
\end{equation*}

Let $c\sqsubset b_0$ be the common intersection of $(b_i)_{i<\omega}$, which is closed. Let $\ind$ be a free amalgamation relation for $T$. By full existence there is $b^*_0\equiv_{b_1}b_0$ such that $b^*_0\ind_{b_1}b_2$. Then $b^*_0\cap b_1=c=b_1\cap b_2$, and so $b^*_0\ind_c b_2$ by freedom. Moreover, $b^*_0\equiv_c b_0\equiv_c b_2$, and so $b^*_0b_2\equiv_cb_2b^*_0$ by Lemma \ref{switch}. Let $b^*_1$ be such that $b^*_0b_2b_1\equiv_c b_2b^*_0b^*_1$. We have:
\begin{enumerate}[$(i)$]
\item $b^*_0b_1\equiv b_0b_1$, and so $q(b^*_0,b_1)$,
\item $q(b_1,b_2)$,
\item $b_2b^*_1\equiv b^*_0b_1$, and so $q(b_2,b^*_1)$,
\item $b^*_1b^*_0\equiv b_1b_2$, and so $q(b^*_1,b^*_0)$.
\end{enumerate}
This proves $(\dagger\dagger)$, as desired.
\end{proof}

Note that $\NSOP_4$ is optimal, as many examples in Section \ref{sec:ex} have $\SOP_3$ (e.g. the generic $K_n$-free graph). Moreover, the freedom axiom is necessary to conclude $\NSOP_4$. For example, the theory of the rational Urysohn space has a stationary independence relation satisfying closure (see \cite{TeZiSIR}), but is $\SOP_n$ for all $n\geq 3$ (see \cite{CoDM2}, \cite{CoTe}). We also observe that, in the proof of $\NSOP_4$, algebraic closure could be replaced by any invariant closure operator. 

\begin{example}
In \cite{SUgroups}, Shelah and Usvyatsov consider groups as a universal class. Using amalgamated free products, they prove that if $\G$ is a sufficiently large universal group, then $\G$ is $\NSOP_4$ with respect to quantifier-free types. In particular, given $A,B,C\subset\G$, set $A\ind_C B$ if $\langle ABC\rangle$ is isomorphic to $\langle AC\rangle\ast_{\langle C\rangle}\langle BC\rangle$ via the natural map. Then $\ind$ satisfies all axioms of a free amalgamation relation except closure (where ``closed" sets are subgroups and, in the stationarity and full existence axioms, elementary equivalence is replaced by group isomorphism). Altogether, the proof of Theorem \ref{thm:NSOP4} recovers this result in \cite{SUgroups}.
\end{example}

\section{Imaginaries and Hyperimaginaries}\label{sec:WEI}

The main result of this section is that any free amalgamation theory has elimination of hyperimaginaries and weak elimination of imaginaries. We first recall basic notation and definitions (see also \cite{Cabook}, \cite{Kibook}). Given a $0$-type-definable equivalence relation $E(x,y)$ and a tuple $a\in\M$, with $\ell(a)=\ell(x)$, $a_E$ denotes the \emph{hyperimaginary} determined by $[a]_E$ (the equivalence class of $a$ modulo $E$). If $E(x,y)$ is $0$-definable and $\ell(x)$ is finite, then $a_E$ is an \emph{imaginary}. Given $A\seq\M^{\heq}$ and $e\in\M^{\heq}$, we let $\cO(e/A)$ denote the orbit of $e$ under $\Aut(\M/A)$. Then $\dcl^{\heq}(A)=\{e\in\M^{\heq}:\cO(e/A)=\{e\}\}$ and $\bdd(A)=\{e\in\M^{\heq}:|\cO(e/A)|<\kappa\}$ (where $\kappa$ is the saturation cardinal of $\M$). For $A\subset\M^{\eq}$, let $\edcl(A)=\dcl^{\heq}(A)\cap\M^{\eq}$ and $\eacl(A)=\bdd(A)\cap\M^{\eq}$. 

A theory $T$ has \emph{elimination of hyperimaginaries} if every $e\in\M^{\heq}$ is interdefinable with a sequence in $\M^{\eq}$. Given $e\in\M^{\eq}$, a \emph{geometric canonical parameter} for $e$ is a finite tuple $c\in\M$ such that $c\in\eacl(e)$  and $e\in\eacl(c)$. If $c\in\eacl(e)$ and $e\in\edcl(c)$ then $c$ is a \emph{weak canonical parameter} for $e$. If $c\in\edcl(e)$ and $e\in\edcl(c)$ then $c$ is a \emph{canonical parameter} for $e$. $T$ has (\emph{geometric}, \emph{weak}) \emph{elimination of imaginaries} if every imaginary has a (geometric, weak) canonical parameter.

\begin{definition}
Suppose $E(x,y)$ is a $0$-type-definable equivalence relation on $\M^I$.
\begin{enumerate}
\item  A sequence $(a_i)_{i<\omega}$ in $\M^I$ is \textbf{$E$-related} if $E(a_i,a_j)$ holds for all $i,j<\omega$.
\item Given $a\in\M^I$, define $\Sigma(a,E)$ to be the set of subtuples $c\sqsubset a$ such that there is an $E$-related indiscernible sequence $(a_i)_{i<\omega}$ in $\M^I$, with common intersection $c$ and $a_0=a$. 
\end{enumerate}
\end{definition}

\begin{lemma}
Suppose $E(x,y)$ is a $0$-type-definable equivalence relation on $\M^I$ and $a\in\M^I$. Then $\Sigma(a,E)$ contains a minimal element under inclusion of tuples.
\end{lemma}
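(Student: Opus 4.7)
The plan is to apply Zorn's lemma to the poset $(\Sigma(a,E),\sqsubset)$, which is nonempty because the constant sequence $a^l=a$ is $E$-related and indiscernible with common intersection $a$ itself. Thus it suffices to show that every descending chain in $\Sigma(a,E)$ has a lower bound in $\Sigma(a,E)$. Fix such a chain $(c_\alpha)_{\alpha<\lambda}$ with $c_\alpha=a_{J_\alpha}$ and $J_\alpha\supseteq J_\beta$ for $\alpha<\beta$, and set $J^*=\bigcap_\alpha J_\alpha$; the task is to show $a_{J^*}\in\Sigma(a,E)$.

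The strategy is a compactness argument followed by standard Erd\H{o}s--Rado extraction of indiscernibles. Let $\Phi((y^l)_{l<\mu})$, for $\mu$ a sufficiently large cardinal, be the partial type over $\emptyset$ asserting: $\varphi(y^l)$ for each $\varphi\in\tp(a)$ and each $l$; the formulas comprising $E(y^l,y^{l'})$ for all $l,l'$; $y^l_i=y^{l'}_i$ for every $i\in J^*$; and $y^l_i\neq y^{l'}_i$ for every $i\in I\setminus J^*$ and every $l\neq l'$. Finite satisfiability exploits the chain structure: given finite $L\subseteq\mu$ and finite $F\subseteq I\setminus J^*$, for each $i\in F$ choose $\alpha_i$ with $i\notin J_{\alpha_i}$ and take $\alpha^*=\max\{\alpha_i:i\in F\}$, so that $F\cap J_{\alpha^*}=\emptyset$ and $J^*\subseteq J_{\alpha^*}$; then the witness sequence for $c_{\alpha^*}$ realizes the finite fragment of $\Phi$ indexed by $L$ and $F$. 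By $\kappa$-saturation, $\Phi$ is realized by some $(b^l)_{l<\mu}$, and Erd\H{o}s--Rado produces an indiscernible $(c^l)_{l<\omega}$ realizing the EM-type of $(b^l)_{l<\mu}$; every condition in $\Phi$ is first-order in finitely many $y^l$'s, so it passes to $(c^l)$. Since $c^0\equiv a$, applying any automorphism $\tau$ with $\tau(c^0)=a$ termwise yields an $E$-related indiscernible sequence starting at $a$ whose common intersection is exactly $a_{J^*}$, as desired.

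The main technical point is the finite satisfiability step, where the linearity of the chain is essential: a single witness $c_{\alpha^*}$ must simultaneously separate every coordinate in $F$. For a non-linearly ordered family $\{J_\alpha\}$, one would instead have to establish closure of $\Sigma(a,E)$ under finite intersection—a noticeably more delicate combinatorial task (and one that would upgrade the conclusion to the existence of a minimum rather than merely a minimal element). Reducing to chains via Zorn sidesteps this issue entirely, which is fortunate since the statement only requires minimality.
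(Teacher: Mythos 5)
Your proof is correct and follows essentially the same route as the paper: Zorn's lemma, with the lower bound for a chain obtained by a compactness argument whose finite satisfiability step exploits exactly the linearity of the chain as you describe. The only (harmless) difference is that the paper builds indiscernibility directly into the partial type---its finite fragments are realized by the already-indiscernible witnessing sequences---so no Erd\H{o}s--Rado extraction is needed afterwards.
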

\begin{proof}
We use Zorn's Lemma. Note that $a\in\Sigma(a,E)$ (witnessed by the constant sequence $a_i=a$ for all $i<\omega$), and so $\Sigma(a,E)$ is nonempty. Suppose $\lambda$ is an ordinal and $(c^i)_{i<\lambda}$ is a sequence of elements of $\Sigma(a,E)$ such that $i<j$ implies $c^j\sqsubset c^i$. Let $K_i\seq I$ be the domain of $c^i$, and note that $i<j$ implies $K_j\seq K_i$. Set $K=\bigcap_{i<\lambda}K_i$ and $c=a_K$. Then $c\sqsubset c^i$ for all $i<\lambda$, and we show $c\in\Sigma(a,E)$. Consider variables $(x^i)_{i<\omega}$, where $\ell(x^i)=I$, and define the type
$$
\Delta=P\cup\{x^i_K=c:i<\omega\}\cup\{x^i_k\neq x^j_k:i<j<\omega,~k\in I\backslash K\},
$$
where $P$ expresses that $(x^i)_{i<\omega}$ is an indiscernible $E$-related sequence with $x^0=a$. A finite subset of $\Delta$ is contained in a type of the form 
$$
\Delta_0=P\cup\{x^i_K=c:i<\omega\}\cup\{x^i_k\neq x^j_k:i<j<\omega,~k\in I_0\},
$$
where $I_0$ is a finite subset of $I\backslash K$. Given $\Delta_0$, fix $t<\lambda$ such that $I_0\seq I\backslash\bigcap_{i<t}K_i$. By assumption, there is an $E$-related indiscernible sequence $(a^i)_{i<\omega}$, with common intersection $c^t$ and $a^0=a$. This sequence realizes $\Delta_0$. By compactness, $\Delta$ is consistent, and so $c\in\Sigma(a,E)$. 
\end{proof}

\begin{definition}
Suppose $E(x,y)$ is a $0$-type-definable equivalence relation on $\M^I$, and $a\in \M^I$. An \textbf{indiscernible parameter for $a_E$} is a minimal element (under $\sqsubset$) of $\Sigma(a,E)$.
\end{definition}

\begin{lemma}\label{PHPlem}
Suppose $E(x,y)$ is a $0$-type-definable equivalence relation on $\M^I$. Fix $a\in\M^I$ and let $c\sqsubset a$ be an indiscernible parameter for $a_E$. Then $c\in\bdd(a_E)$. 
\end{lemma}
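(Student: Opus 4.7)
The plan is to argue by contradiction. Assume $c \notin \bdd(a_E)$, write $c = a_K$ with $K \seq I$ the underlying index set, and fix the witnessing $E$-related indiscernible sequence $(a_j)_{j<\omega}$ with $a_0 = a$ and common intersection $c$, extended by compactness to length $\kappa$. First I would observe that $(a_j)_j$ is in fact indiscernible over $a_E$, not merely over $\emptyset$: since $E(a,a_j)$ holds throughout, any automorphism witnessing the standard indiscernibility automatically fixes the hyperimaginary $a_E$. Combined with the elementary fact that an indiscernible sequence over $A$ of length $\kappa$ all of whose terms lie in a single bounded orbit over $A$ must be constant, this forces $a^k \notin \bdd(a_E)$ for every $k \in I \setminus K$: otherwise $(a_j^k)_{j<\kappa}$ would collapse, contradicting $a_0^k \neq a_1^k$.

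Next I would exploit these unboundedness facts to produce an indiscernible sequence $(b_n)_{n<\omega}$ over $a_E$ of realizations of $\tp(a/a_E)$ with two strong distinctness properties: $(b_n)_K$ are pairwise distinct, and $b_n^k \neq b_m^k$ for all $n \neq m$ and every $k \in I \setminus K$. To do so, I construct by transfinite induction a long family $(d_\alpha)_{\alpha<\lambda}$ of realizations of $\tp(a/a_E)$ enjoying the analogous pairwise distinctness: at stage $\alpha$, only fewer than $\kappa$ forbidden values need be avoided in each of the unbounded orbits of $c$ and of each $a^k$ ($k \in I \setminus K$). An Erd\H{o}s--Rado extraction then yields the desired $(b_n)_{n<\omega}$, and the distinctness survives because it is a $2$-type condition. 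Finally I apply an automorphism fixing $a_E$ to assume $b_0 = a$. I expect the main obstacle to be verifying joint realizability at each inductive step, i.e.\ that one can avoid finitely many forbidden values on the $K$-projection and on each coordinate $a^k$ simultaneously within the single orbit $\tp(a/a_E)$; this will rest on the unboundedness of each of these orbits over $a_E$ together with a routine compactness argument.

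Setting $M = \{k \in I : b_0^k = b_1^k\}$ so that $a_M$ is the common intersection of $(b_n)_n$, the distinctness of the coordinates outside $K$ forces $M \seq K$, while the distinctness of the $K$-projections forces $K \not\seq M$; hence $M \subsetneq K$. Since $(b_n)_n$ is $E$-related and starts at $a$, this gives $a_M \in \Sigma(a,E)$ as a proper subtuple of $c$, contradicting the minimality of $c$ in $\Sigma(a,E)$.
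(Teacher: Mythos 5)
Your overall architecture is the same as the paper's: assume $c\notin\bdd(a_E)$ and contradict minimality by manufacturing a new $E$-related indiscernible sequence starting at $a$ whose common intersection is a proper subtuple of $c$. But the step you flag as ``the main obstacle'' is exactly where all the content of the paper's proof lives, and your proposed resolution of it does not work. Separate unboundedness of $\cO(c/a_E)$ and of each $\cO(a^k/a_E)$, plus compactness, does not yield a single $d\models\tp(a/a_E)$ avoiding finitely many forbidden values simultaneously in the $K$-projection and in each coordinate outside $K$: compactness only reduces the $<\kappa$ constraints to finitely many, and the finite case is the real problem, since nothing you have cited rules out that every conjugate of $a$ over $a_E$ which moves $c$ leaves some coordinate $a^k$ ($k\notin K$) among finitely many values. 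The statement is in fact true, but proving it requires a genuinely new ingredient: either B.\ H.\ Neumann's lemma (a group is not covered by finitely many cosets of subgroups of infinite index, applied to $\Aut(\M/a_E)$ and the stabilizers of $c$ and the $a^k$), or --- as in the paper --- the trick of applying automorphisms $\sigma_\mu$ that move $c$ to the \emph{terms} $a^r$ of the original sequence, so that the $K$-part is controlled by $\mu$ while the remaining coordinates are moved by varying $r$; the paper's entire Claim, with its Erd\H{o}s--Rado argument over the double index $(\mu,r)$, is devoted to making this work. As written, your proof is missing this idea.

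There is a second gap at the extraction step. Your two distinctness properties are not of the same logical kind: ``$b^k_n\neq b^k_m$ for a fixed $k$'' is a formula and, holding for all pairs of $(d_\alpha)$, passes to any realization of the $\EM$-type; but ``$(b_n)_K\neq (b_m)_K$'' is an \emph{infinite disjunction} $\bigvee_{k\in K}x^k_1\neq x^k_2$ when $K$ is infinite (which it can be --- the lemma is applied to hyperimaginaries with infinite $I$). Such a disjunction is not preserved by passing to an indiscernible realization of the $\EM$-type: the extracted sequence may satisfy $x^k_1=x^k_2$ for every $k\in K$ even though no pair of the original sequence agreed on all of $K$, in which case $M=K$ and you get no contradiction. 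This is repairable --- since $c\notin\bdd(a_E)$ and a tuple is bounded over $a_E$ iff each of its coordinates is, some single coordinate $a^{k_0}$ with $k_0\in K$ satisfies $a^{k_0}\notin\bdd(a_E)$, and you should demand distinctness in that fixed coordinate instead of distinctness of the $K$-projections --- but the justification ``it is a $2$-type condition'' is false as stated. The paper sidesteps both issues by first extracting an indiscernible (non-constant) sequence of conjugates $(c^l)$ of $c$ \emph{as a subsequence}, letting $d$ be its common intersection (automatically proper in $c$), and then propagating only coordinatewise disagreements, which are conjunctions of formulas.
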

\begin{proof}
We may clearly assume $c$ is nonempty. Let $I_0\seq I$ be the domain of $c$. Let $(a^l)_{l<\omega}$ be an $E$-related indiscernible sequence, with common intersection $c$, such that $a^0=a$.

Suppose, toward a contradiction, that $c\not\in\bdd(a_E)$, and so we may find a nonconstant sequence $(c^l)_{l<\lambda}$ in $\cO(c/a_E)$, with $c^0=c$ and $\lambda$ arbitrarily large. Choosing $\lambda$ large enough, we may assume $(c^l)_{l<\lambda}$ is indiscernible. For later purposes, we also want $\lambda\geq(2^{|I|+\aleph_0})^+$. By compactness, we may stretch $(a^l)_{l<\omega}$ so that it is indexed $(a^l)_{l<\lambda}$ (and still indiscernible, $E$-related, with common intersection $c$).
 
Let $I_1\seq I_0$ be the domain of the common intersection $d$ of $(c^l)_{l<\lambda}$. Since $(c^l)_{l<\lambda}$ is not a constant sequence, we must have $I_1\neq I_0$, and so $d$ is a proper subtuple of $c$. We will build an indiscernible $E$-related sequence $(b^m)_{m<\omega}$, with $b^0=a$, such that the common intersection of $(b^m)_{m<\omega}$ is a subtuple of $d$. This will contradict the minimality of $c$. 

Given $J\seq I$, define a binary relation $D_J$ on $\M^I$ such that $D_J(b,b')$ holds if and only if, for all $s,t\in I$, if $b_s=b'_t$ then $s,t\in J$. Given $l<\kappa$, fix $\sigma_l\in\Aut(\M/a_E)$ such that $\sigma_l(c)=c^l$ (we assume $\sigma_0$ is the identity). We inductively construct a sequence $(b^m)_{m<\omega}$ and an injective function $f:\omega\func\lambda$ such that
\begin{enumerate}[$(i)$]
\item $b^0=a$;
\item for all $m<\omega$ there is $r<\lambda$ such that $b^m=\sigma_{f(m)}(a^r)$;
\item $D_{I_0}(b^l,b^m)$ holds for all $l<m<\omega$.
\end{enumerate}

We first argue why this construction finishes the proof. Suppose we have $(b^m)_{m<\omega}$ as described. To show $(b^m)_{m<\omega}$ is $E$-related, we fix $m<\omega$ and show $E(b^m,a)$ holds. By $(ii)$, we have some $r<\lambda$ such that $b^m=\sigma_{f(m)}(a^r)$. Note that $E(a^r,a)$ holds since $(a^l)_{l<\lambda}$ is $E$-related. Since $\sigma_{f(m)}\in\Aut(\M/a_E)$, we then have $E(\sigma_{f(m)}(a^r),a)$. Next, we fix $l<m<\omega$ and show that $D_{I_1}(b^l,b^m)$ holds. Indeed, if $s,t\in I$ and $b^l_s=b^m_t$, then we have $s,t\in I_0$ by $(iii)$. It follows from $(ii)$ that $b^l_s=c^{f(l)}_s$ and $b^m_t=c^{f(m)}_t$ (recall $c=a^r_{I_0}$ for all $r<\lambda$). Then $f(l)\neq f(m)$ implies $s,t\in I_1$. Finally, if we replace $(b^m)_{m<\omega}$ by an indiscernible realization of its $\EM$-type, and conjugate to maintain $b^0=a$, then $(b^m)_{m<\omega}$ is the desired $E$-related indiscernible sequence, whose common intersection is a subtuple of $d$.

We now proceed with the construction of $(b^m)_{m<\omega}$. Let $b^0=a$ and $f(0)=0$. For the induction hypothesis, fix $n>0$, and suppose we have constructed $(b^m)_{m<n}$ and $f:n\func\lambda$ satisfying properties $(i)$, $(ii)$, and $(iii)$ above (relativized to $n$).

\noindent\textit{Claim}: There are $\mu,r<\lambda$ such that, for all $m<n$,  $\mu\neq f(m)$ and $D_{I_0}(b^m,\sigma_\mu(a^r))$.

Note that, given the claim, if we set $f(n)=\mu$ and $b^n=\sigma_\mu(a^r)$, then $b^n$ and $f:n+1\func\lambda$ are as desired. Therefore the claim finishes the inductive step in the construction of $(b^m)_{m<\omega}$.

\noindent\textit{Proof of the claim}: Suppose the claim fails. Then, for all $\mu\in\lambda\backslash\Im(f)$ and $r<\lambda$, there are $m<n$ and $s,t\in I$ such that $\{s,t\}\not\seq I_0$ and $b^m_s=\sigma_\mu(a^r_t)$. We first find an integer $m<n$, indices $s,t\in I$ with $\{s,t\}\not\seq I_0$, and $2$-element sets $\Delta_1,\Delta_2\seq\lambda\backslash\Im(f)$ such that $b^m_s=\sigma_\mu(a^r_t)$ for all $(\mu,r)\in\Delta_1\times\Delta_2$. To do this, set
$$
X=\{(\mu,r)\in(\lambda\backslash\Im(f))^2:r<\mu\}\mand Y=\{(m,s,t):m<n,~s,t\in I\},
$$
and  consider the map $\tau:X\func Y$ obtained above from the assumption that the claim fails. Let $\theta=|I|+\aleph_0$. We have $\lambda\geq (2^\theta)^+$ by assumption, and so $\lambda\rightarrow (\theta^+)^2_\theta$ by the Erd\H{o}s-Rado Theorem (see, e.g., \cite[Theorem C.3.2]{TeZi}). Applied to the map $\tau$, we obtain an infinite\footnote{Erd\H{o}s-Rado ensures $|\Delta|\geq\theta^+$; however we only need $|\Delta|\geq 4$ for the proof.} set $\Delta\seq\lambda\backslash\Im(f)$, an integer $m<n$, and $s,t\in I$ such that $\{s,t\}\not\seq I_0$ and $b^m_s=\sigma_\mu(a^r_t)$ for all $(\mu,r)\in X\cap\Delta^2$. Now let $\Delta_1,\Delta_2\seq\Delta$ be disjoint $2$-element sets with $\max\Delta_2<\min\Delta_1$.

Now fix some $\mu\in\Delta_1$ and distinct $r,r'\in \Delta_2$. Then $\sigma_\mu(a^r_t)=b^m_s=\sigma_\mu(a^{r'}_t)$, and so $a^r_t=a^{r'}_t$. In particular, we must have $t\in I_0$, and so $s\not\in I_0$. Moreover, for any $r<\lambda$, we have $a^r_t=a_t$. Therefore, given $\mu\in\Delta_1$, we have $b^m_s=\sigma_\mu(a_t)$.

Fix distinct $\mu,\mu'\in\Delta_1$. Since $t\in I_0$, we altogether have $c^\mu_t=\sigma_\mu(a_t)=b^m_s=\sigma_{\mu'}(a_t)=c^{\mu'}_t$, and so $t\in I_1$. Therefore $a_t=c_t\in d$ and so $\sigma_\mu(a_t)=a_t$ for all $\mu\in\Delta_1$. In particular, we have $b^m_s=a_t=b^0_t$, and so $D_{I_0}(b^0,b^m)$ fails. If $m>0$ then we obtain a contradiction to property $(iii)$ in the induction hypothesis. Therefore $m=0$, and so $a_s=b^0_s=a_t$. By indiscernibility of $(a^l)_{l<\lambda}$, and since $t\in I_0$, we have $a^1_s=a^1_t=a_t=a_s$, which contradicts $s\not\in I_0$.
\end{proof}

\begin{lemma}\label{dcllem}
Suppose $\ind$ is a ternary relation on $\M$ satisfying invariance, monotonicity, full existence, stationarity, and freedom. Let $E(x,y)$ be a $0$-type-definable equivalence relation on $\M^I$, and suppose $a\in\M^I$ is closed. Then $a_E\in\dcl^{\heq}(c)$ for any $c\in \Sigma(a,E)$.
\end{lemma}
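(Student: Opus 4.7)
The plan is to show that for each $\sigma \in \Aut(\M/c)$, setting $a' := \sigma(a)$, one has $E(a, a')$. This will give $\sigma(a_E) = a_E$, hence $a_E \in \dcl^{\heq}(c)$.

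The central observation I would exploit is this: any $a^* \in \M^I$ with $a^* \equiv_{a^1} a$ automatically satisfies $E(a, a^*)$. Indeed, $E$ is $0$-type-definable and $E(a, a^1)$ holds by hypothesis on the sequence, so the partial type expressing $E(x, a^1)$ lies in $\tp(a/a^1)$; hence $a^* \equiv_{a^1} a$ forces $E(a^*, a^1)$, and transitivity and symmetry of $E$ together with $E(a, a^1)$ yield $E(a, a^*)$. By applying $\sigma$ to the whole sequence $(a^l)$ I obtain a new $E$-related indiscernible sequence $(b^l) := (\sigma(a^l))$ with $b^0 = a'$ and the same common intersection $c$; by the parallel argument, any $a^{**}$ with $a^{**} \equiv_{b^1} a'$ satisfies $E(a', a^{**})$.

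The strategy is then to manufacture a single witness $a^* \in \M^I$ satisfying both $a^* \equiv_{a^1} a$ and $a^* \equiv_{b^1} a'$; the two instances of the central observation will then combine to give $E(a, a^*)$ and $E(a', a^*)$, hence $E(a, a')$. Note that $a^1$ and $b^1$ are closed tuples (as automorphic images of the closed tuple $a$), and both contain the closed set $\bar c := \acl(c)$. My plan is: by full existence with closed base $a^1$, produce $a^* \equiv_{a^1} a$ with $a^* \ind_{a^1} b^1$; symmetrically, by full existence with base $b^1$, produce $\tilde a \equiv_{b^1} a'$ with $\tilde a \ind_{b^1} a^1$; and then match $a^*$ with $\tilde a$ by applying stationarity over the common closed base $\bar c \seq a^1 \cap b^1$, to conclude $a^* \equiv_{\bar c b^1} \tilde a$ and hence $a^* \equiv_{b^1} a'$. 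The freedom axiom is the essential ingredient here: it is meant to transport the independence $a^* \ind_{a^1} b^1$ (and likewise $\tilde a \ind_{b^1} a^1$) down to independence over $\bar c$, at which point stationarity can be applied to identify the joint types.

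The hardest step is this final invocation of freedom: to pass from $a^* \ind_{a^1} b^1$ to $a^* \ind_{\bar c} b^1$, one needs the intersection $a^1 \cap (a^* b^1)$ to lie inside $\bar c$, and an analogous condition is needed for $\tilde a$; these are not automatic. To arrange them, I would insert a preparatory full existence step to replace $a^1$ and $b^1$ with $\bar c$-independent copies (of the same $\bar c$-types) so that $a^1 \cap b^1 = \bar c$, with $a^*$ and $\tilde a$ chosen afterwards relative to these replacements. Without the closure axiom, and in the absence of symmetry or full transitivity among the hypotheses of the lemma, this intersection control is delicate and constitutes the principal technical obstacle; standard moves that would trivialize the bookkeeping (such as freely interchanging the two sides of $\ind$) must be performed by hand. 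The overall architecture of the proof is nevertheless clear: interlock the type-theoretic observation about $E$-related conjugates over $a^1$ with the abstract independence machinery (full existence, stationarity, freedom) so as to interpolate the required common witness $a^*$.
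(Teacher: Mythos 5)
Your opening reduction and your ``central observation'' are both correct and agree with the paper: it suffices to show $E(a,\sigma(a))$ for $\sigma\in\Aut(\M/c)$, and any realization of $\tp(a/a^1)$ is $E$-equivalent to $a$. The gap is in the matching step. To get $a^*\equiv_{b^1}\tilde a$ from stationarity over $\bar c$ you need \emph{both} $a^*\ind_{\bar c}b^1$ and $\tilde a\ind_{\bar c}b^1$, but your construction only provides $\tilde a\ind_{b^1}a^1$. Freedom shrinks the base of an existing instance of $\ind$; it cannot convert independence \emph{from $a^1$ over $b^1$} into independence \emph{from $b^1$ over $\bar c$}, and symmetry is not among the hypotheses. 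Nor is this mere bookkeeping: no realization of $\tp(a'/b^1)$ need be $\ind$-independent from $b^1$ over $\bar c$ at all, because full existence produces independent realizations of types \emph{over the base}, not independent realizations of a prescribed type over the larger set $b^1$. (In the random graph with $\ind=\ind^{fa}$ and $c=\emptyset$, every $\tilde a\equiv_{b^1}a'$ has the same edges to $b^1$ as $a'$, which are generally present, so $\tilde a\nind^{fa}_{\emptyset}b^1$.) For the same reason the common witness you seek --- one realization of both $\tp(a/a^1)$ and $\tp(a'/b^1)$ --- need not exist: when $a^1\cap b^1$ is large the two types can outright conflict. Your proposed repair makes things worse, since replacing $a^1,b^1$ by independent copies of their types \emph{over $\bar c$} destroys the central observation: $E(a,\hat a^1)$ is a condition in $\tp(a^1/a)$, not in $\tp(a^1/\bar c)$.

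The paper avoids all of this by never amalgamating types over $a^1$ and $b^1$; instead it uses the \emph{third} term of the sequence. Full existence over $a^1$ gives $b\equiv_{a^1}a$ with $b\ind_{a^1}a^2$; the common-intersection structure of the indiscernible sequence gives $b\cap a^1=c=a^1\cap a^2$ for free, so freedom yields $b\ind_c a^2$, and $E(b,a^2)$ holds. Thus $(b,a^2)$ is an $E$-related, $\ind_c$-independent pair of realizations of $\tp(a/c)$, and stationarity (after conjugating the second coordinate) forces \emph{every} $\ind_c$-independent pair of realizations of $\tp(a/c)$ to be $E$-related. Choosing $b_*\equiv_c a$ with $b_*\ind_c a\sigma(a)$ by full existence over $c$ then gives $E(b_*,a)$ and $E(b_*,\sigma(a))$, hence $E(a,\sigma(a))$. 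This is the missing idea: manufacture one independent $E$-related pair from the tail of the sequence and let stationarity propagate $E$ to the pairs $(b_*,a)$ and $(b_*,\sigma(a))$, rather than trying to interpolate a realization of two full types over large parameter sets.
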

\begin{proof}
Fix $c\in \Sigma(a,E)$, and let $(a_i)_{i<\omega}$ be an $E$-related indiscernible sequence, with common intersection $c$, such that $a_0=a$. Note that $(a_i)_{i<\omega}$ is $c$-indiscernible and $c$ is closed. To show $a_E\in\dcl^{\heq}(c)$, we fix $\sigma\in\Aut(\M/c)$ and show $E(a,\sigma(a))$.    

By full existence, there is $b\equiv_{a_1}a$ such that $b\ind_{a_1}a_2$. Then $ba_1\equiv_c aa_1$, and so $b\cap a_1=c=a_1\cap a_2$. Therefore $b\ind_c a_2$ by freedom. Note also that, since $E(a,a_1)$ holds, we have $E(b,a_1)$, and so, combined with $E(a_1,a_2)$, we obtain $E(b,a_2)$. 

By full existence, there is $b_*\equiv_c a$ such that $b_*\ind_c a\sigma(a)$. Since $a\equiv_c a_2$, we may fix $b_1$ such that $b_*a\equiv_c b_1a_2$. Then $b_1\equiv_c b_*\equiv_c a\equiv_c b$ and, by monotonicity and invariance, $b_1\ind_c a_2$. Therefore, by stationarity, $ba_2\equiv_c b_1a_2 \equiv_c b_*a$. In particular, we have $E(b_*,a)$. 

Similarly, since $\sigma(a)\equiv_c a \equiv_c a_2$, we may fix $b_2$ such that $b_*\sigma(a)\equiv_c b_2a_2$. Then $b_2\equiv_c b_* \equiv_c b$ and, by monotonicity and invariance, $b_2 \ind_c a_2$. Therefore, by stationarity, $ba_2 \equiv_c b_2a_2 \equiv_c b_*\sigma(a)$. In particular, we have $E(b_*,\sigma(a))$. Altogether, we have $E(b_*,a)$ and $E(b_*,\sigma(a))$, and so $E(a,\sigma(a))$ holds, as desired.
\end{proof}

\begin{theorem}\label{thm:WEI}
If $T$ is a free amalgamation theory then $T$ has elimination of hyperimaginaries and weak elimination of imaginaries.
\end{theorem}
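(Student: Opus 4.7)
The plan is to use Lemmas~\ref{PHPlem} and~\ref{dcllem} to associate, to each hyperimaginary $e$, a real tuple $c$ satisfying $c\in\bdd(e)$ and $e\in\dcl^{\heq}(c)$; WEI and EHI will then both follow.

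First I would reduce to the case $e=a_E$ with $a$ a closed tuple, so that Lemma~\ref{dcllem} is applicable. Starting from an arbitrary presentation $e=b_F$, I enumerate $\acl(b)$ as a tuple $a\in\M^I$, with $J\seq I$ indexing $b$ inside $a$, and replace $F$ by the $0$-type-definable equivalence relation $E(x,y)\Leftrightarrow F(x_J,y_J)$ on $\M^I$. Then $a$ is closed and $a_E$ is interdefinable with $b_F$ in $\M^{\heq}$ (projection to the $J$-coordinates recovers $b_F$ from $a_E$, and pulling back under this projection recovers $a_E$ from $b_F$). Let $c\sqsubset a$ be an indiscernible parameter for $a_E$, which exists by the preceding lemma on minimal elements of $\Sigma(a,E)$. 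Lemma~\ref{PHPlem} yields $c\in\bdd(e)$, and Lemma~\ref{dcllem}, applied to the closed $a$, yields $e\in\dcl^{\heq}(c)$.

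For WEI, I specialize to $e$ a finitary imaginary in $\M^{\eq}$. Then $e\in\dcl^{\heq}(c)\cap\M^{\eq}=\edcl(c)$ and $c\in\bdd(e)\cap\M=\acl(e)\seq\eacl(e)$. Although $c$ may be infinite (since $\acl(b)$ can be), a standard compactness argument---the partial type $\tp(a/c)\cup\{\neg E(x,a)\}$, where $a$ represents the imaginary $e$, is inconsistent, so a finite subtype is too---delivers $e\in\edcl(c_0)$ for some finite $c_0\sqsubset c$. As a subtuple of $c\in\eacl(e)$, the tuple $c_0$ is itself in $\eacl(e)$, so $c_0$ is a weak canonical parameter for $e$.

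For EHI, I would invoke the classical fact that a hyperimaginary $e$ for which there exists a real $c$ with $c\in\bdd(e)$ and $e\in\dcl^{\heq}(c)$ is automatically interdefinable with a sequence in $\M^{\eq}$. Roughly, the bounded orbit $\cO(c/e)$ is $\Aut(\M/e)$-invariant and, sliced by finite conjunctions of formulas, decomposes into $\Aut(\M/e)$-invariant pieces whose canonical parameters lie in $\M^{\eq}\cap\dcl^{\heq}(e)$; these canonical parameters collectively recover $e$. The hard part will be executing this classical passage cleanly in the type-definable setting; the preceding steps reduce essentially to closure bookkeeping plus direct appeals to Lemmas~\ref{PHPlem} and~\ref{dcllem}.
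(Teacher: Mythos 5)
Your proposal is correct and follows essentially the same route as the paper: reduce to a closed tuple $a=\acl(b)$ with the pulled-back equivalence relation (the paper's $E_*$), take an indiscernible parameter $c$, apply Lemmas \ref{PHPlem} and \ref{dcllem} to get $c\in\bdd(e)$ and $e\in\dcl^{\heq}(c)$, extract a finite $c_0$ by compactness for WEI, and cite the classical fact (the paper uses \cite[Lemma 18.6]{Cabook}) for EHI. The only cosmetic difference is that you phrase the reduction as replacing $b_F$ by the interdefinable $a_E$, whereas the paper extends $a_E$ to $(aa_*)_{E_*}$ and then transfers back; these are the same argument.
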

\begin{proof}
Both results rely on the following claim.

\noindent\textit{Claim}: Suppose $E(x,y)$ is a $0$-type-definable equivalence relation on $\M^I$, and $a\in\M^I$. Then there is a real tuple $c\in\M$ such that $c\in\bdd(a_E)$ and $a_E\in\dcl^{\heq}(c)$.

\noindent\textit{Proof}: Let $a_*$ be a tuple, with domain $I_*$, such that $I\cap I_*=\emptyset$ and $aa_*=\acl(a)$. Consider the equivalence relation $E_*$ on $\M^{I+I^*}$ given by $E_*(x_Ix_{I_*},y_Iy_{I_*})$ if and only if $E(x_I,y_I)$. Then $E_*$ is $0$-type-definable, and so, if $c$ is an indiscernible parameter for $e:=(aa_*)_{E_*}$, then $c\in\bdd(e)$ and $e\in\dcl^{\heq}(c)$ by Lemmas \ref{PHPlem} and \ref{dcllem}.

Note that $\Aut(\M/a_E)=\Aut(\M/e)$, and so $\cO(c/a_E)=\cO(c/e)$, which is bounded by assumption. Therefore $c\in\bdd(a_E)$. Moreover, if $\sigma\in\Aut(\M/c)$ then $\sigma(e)=e$, which means $E_*(aa_*,\sigma(aa_*))$ holds, and so $E(a,\sigma(a))$ holds by definition. Therefore $\sigma(a_E)=a_E$, and so $a_E\in\dcl^{\heq}(c)$. \claim

By the claim, and \cite[Lemma 18.6]{Cabook}, we immediately obtain elimination of hyperimaginaries. For weak elimination of imaginaries, fix a $0$-definable equivalence relation on $\M^n$, and $a\in\M^n$. Let $c\in\M$ be as in the claim. Then $a_E\in\edcl(c)$ (with $c$ considered as a small subset of $\M$). Let $c_0$ be a finite subtuple of $c$ such that $a_E\in\edcl(c_0)$. Then $c\in\bdd(a_E)$ implies $c_0\in\bdd(a_E)\cap\M\seq\eacl(a_E)$, and so $c_0$ is a weak canonical parameter for $a_E$.
\end{proof}

We  remark that Theorem \ref{thm:WEI} cannot be strengthened to full elimination of imaginaries. Indeed, one often has that finite imaginaries in theories of homogeneous structures, in symmetric relational languages, do not have canonical parameters. For example, this is the case for the random graph, generic $K_n$-free graph, and even just the infinite set. It is also worth observing that the freedom axiom is necessary in Theorem \ref{thm:WEI}. For example, any generic theory of infinitely refining equivalence relations fails weak elimination of imaginaries, but does have a stationary independence relation satisfying closure, namely, nonforking independence. Moreover, the theory of the rational Urysohn space does not eliminate hyperimaginaries (see \cite{CaWa}, \cite{CoDM2}) but, as previously remarked, has a stationary independence relation satisfying closure.

\section{Thorn-Forking and Rosiness}

In this section, we use weak elimination of imaginaries to establish rosiness for many of the examples in Section \ref{sec:ex}. This subject has been previously investigated in some cases. In particular, rosiness for the random graph and generic $K^r_n$-free hypergraphs (for $r>2$) follows from the fact that these theories are simple. Other examples are known to be rosy due to previous proofs of weak elimination of imaginaries. In particular, weak elimination of imaginaries is shown for $\cU_3$ by Casanovas and Wagner \cite{CaWa}, and for the Hrushovski generics $\cM_f$ by Wong \cite{Wothesis}.

On the other hand, rosiness for the general class of \Fraisse\ limits in Example \ref{ex}.\ref{ex1}, does not appear in previous literature. This includes even the specific case of the generic $K_n$-free graphs. Rosiness for the generic $(K_n+K_3)$-free graphs of Example \ref{ex}.\ref{ex2} is also a new result.

We first state the definition of thorn-forking, which follows  \cite{Adgeo}. 

\begin{definition}
Suppose $T$ is a complete theory and $\M$ is a monster model of $T$.
\begin{enumerate}
\item A ternary relation $\ind$ satisfies \textbf{local character} if, for all $A\subset\M$, there is a cardinal $\kappa(A)$ such that, for all $B\subset\M$, there is $C\seq B$ such that $|C|<\kappa(A)$ and $A\ind_C B$.
\item Define \textbf{algebraic independence} $\ind^a$:
$$
\textstyle A\ind^a_C B\miff \acl(AC)\cap\acl(BC)=\acl(C).
$$
\item Define \textbf{$M$-independence} $\ind^M$ by ``forcing base monotonicity" on $\ind^a$:
$$
\textstyle A\ind^M_C B\miff A\ind^a_D B\text{ for all $C\seq D\seq\acl(BC)$}.
$$
\item Define \textbf{thorn independence} $\ind^{\thrn}$ by ``forcing extension" on $\ind^M$:
$$
\textstyle A\ind^\thrn_C B \miff \text{for all $\hat{B}\supseteq B$ there is $A'\equiv_{BC} A$ such that $\textstyle A'\ind^M_C \hat{B}$}.
$$
\end{enumerate}
 $T$ is \textbf{rosy} (resp. \textbf{real rosy}) if $\ind^\thrn$ satisfies local character in $\M^{\eq}$ (resp. in $\M$).
\end{definition}

Thorn-forking was developed in order to define the weakest ternary relation satisfying enough basic axioms to be considered a reasonable notion of independence. In many ways, rosy theories are to thorn-forking as simple theories are to forking. However, the region of rosy theories properly extends the simple theories (e.g. o-minimal theories are rosy). See \cite{EaOn}, \cite{OnTH} for further details.

Since rosiness is defined as a property of $T^{\eq}$, an understanding of imaginaries greatly simplifies the work required to determine if a theory is rosy. In particular, if $T$ has weak elimination of imaginaries, then it suffices to check that $T$ is real rosy. This fact is shown explicitly in \cite{EaGo}, implicitly in \cite{EaOn}, and is also an informative exercise in forking calculus.

\begin{fact}\label{fact:rrosy}
Any real rosy theory, with weak elimination of imaginaries, is rosy.
\end{fact}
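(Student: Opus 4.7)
The plan is to use weak elimination of imaginaries (WEI) to reduce local character of $\ind^{\thrn}$ on $\M^{\eq}$ to the hypothesized local character on $\M$. First, given $A\subset\M^{\eq}$, I would apply WEI to each coordinate: for each $a\in A$, pick a weak canonical parameter $c_a\in\M$ with $c_a\in\eacl(a)$ and $a\in\edcl(c_a)$. Let $A_0\seq\M$ be the union of all such $c_a$; then $|A_0|\leq|A|+\aleph_0$ and $\eacl(A_0)=\eacl(A)$. Do the same for $B\subset\M^{\eq}$, obtaining $B_0\seq\M$ with $\eacl(B_0)=\eacl(B)$.

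Next, by real rosiness there is a cardinal $\kappa(A_0)$ witnessing local character for $A_0$ on $\M$. Applied to $B_0$, it yields $C_0\seq B_0$ with $|C_0|<\kappa(A_0)$ and $A_0\ind^{\thrn}_{C_0}B_0$ (computed in $\M$). I would lift $C_0$ back into $B$ by choosing, for each $c_0\in C_0$, some $b\in B$ with $c_b=c_0$, and taking $C\seq B$ to be the resulting subset. Then $|C|\leq|C_0|<\kappa(A_0)$, and $\eacl(C)=\eacl(C_0)$. Setting $\kappa(A):=\kappa(A_0)$ then gives the desired local character cardinal, provided I can verify $A\ind^{\thrn}_{C}B$ in $\M^{\eq}$.

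This verification rests on two transfer claims. (i) Under WEI, for real tuples $\ind^{\thrn}$ computed in $\M$ agrees with $\ind^{\thrn}$ computed in $\M^{\eq}$. Indeed, any imaginary $e\in\eacl(A_0C_0)\cap\eacl(B_0C_0)\setminus\eacl(C_0)$ would have a real weak canonical parameter in $\acl(A_0C_0)\cap\acl(B_0C_0)\setminus\acl(C_0)$, contradicting $A_0\ind^{a}_{C_0}B_0$ in $\M$; the same argument lifts through the base-monotonicity step defining $\ind^M$ and the extension step defining $\ind^{\thrn}$. (ii) Each of $\ind^a,\ind^M,\ind^{\thrn}$ in $\M^{\eq}$ depends only on the $\eacl$-closures of its arguments, so, since $\eacl(A)=\eacl(A_0)$, $\eacl(B)=\eacl(B_0)$, and $\eacl(C)=\eacl(C_0)$, the relation $A_0\ind^{\thrn}_{C_0}B_0$ in $\M^{\eq}$ is equivalent to $A\ind^{\thrn}_{C}B$ in $\M^{\eq}$.

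The main obstacle is the careful execution of claim (i) through the extension quantifier in the definition of $\ind^{\thrn}$: given a superset $\hat{B}\supseteq B$ in $\M^{\eq}$, one must produce a superset $\hat{B}_0\supseteq B_0$ in $\M$ (by adjoining WCPs of the new imaginary coordinates) and translate the witness $A'\equiv_{BC}A$ satisfying $A'\ind^M_{C}\hat{B}$ between the real and imaginary settings. This bookkeeping is routine, but it is the only nontrivial content of the argument; everything else is a straightforward invocation of WEI followed by real local character.
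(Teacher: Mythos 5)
The paper does not actually prove this statement: it is recorded as a \emph{Fact} with the remark that it ``is shown explicitly in [Ealy--Goldbring], implicitly in [Ealy--Onshuus], and is also an informative exercise in forking calculus.'' So there is no in-paper argument to compare against; your proposal is an attempt at exactly the exercise the author defers to the literature, and its overall shape --- replace imaginaries by weak canonical parameters, apply real local character, and transfer $\ind^{\thrn}$ between $\M$ and $\M^{\eq}$ via the two claims (i) and (ii) --- is the standard and correct route. Your argument for transferring $\ind^{a}$ is precisely the one the paper uses in Lemma 6.10(a) for geometric elimination of imaginaries (which WEI implies), and pushing it through the base-monotonicity and extension quantifiers is indeed routine, given the standard facts that $\ind^{M}$ and $\ind^{\thrn}$ are invariant under replacing arguments by their algebraic closures.

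One small bookkeeping slip worth fixing: local character in $\M$ hands you an arbitrary subset $C_0\seq B_0$ of \emph{real points}, which need not be a union of complete canonical-parameter tuples $c_b$; so after choosing $C\seq B$ meeting every $c_b$ that intersects $C_0$, you only get $\eacl(C)\supseteq\eacl(C_0)$, not equality. This is harmless --- set $C_0'=\bigcup\{c_b:c_b\cap C_0\neq\emptyset\}$, note $|C_0'|\leq|C_0|+\aleph_0$ since each $c_b$ is finite, and use base monotonicity of $\ind^{\thrn}$ (as $C_0\seq C_0'\seq B_0\seq\acl(B_0C_0)$) to get $A_0\ind^{\thrn}_{C_0'}B_0$ with $\eacl(C)=\eacl(C_0')$ --- but as written the equality $\eacl(C)=\eacl(C_0)$ is not justified. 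With that repair the proof is complete.
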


\begin{corollary}
Any real rosy free amalgamation theory is rosy.
\end{corollary}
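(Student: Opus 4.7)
The proof is a direct combination of two results already established in the excerpt, so there is essentially no mathematical obstacle to overcome. The plan is as follows.

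First, I would invoke Theorem \ref{thm:WEI}, which guarantees that any free amalgamation theory has weak elimination of imaginaries. This is the crucial structural input that makes the reduction from rosiness to real rosiness available.

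Second, I would apply Fact \ref{fact:rrosy}, which states that any real rosy theory with weak elimination of imaginaries is rosy. Given that $T$ is assumed real rosy and, by the previous step, has weak elimination of imaginaries, the conclusion that $T$ is rosy follows immediately.

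There is no main obstacle here: both of the required ingredients are in hand, and the corollary is a one-line consequence of their conjunction. The only item worth highlighting in the write-up is perhaps a brief parenthetical reminder that ``free amalgamation theory'' is exactly the hypothesis of Theorem \ref{thm:WEI}, so no additional verification is required. In practice, the proof can be compressed to a single sentence citing the two results.
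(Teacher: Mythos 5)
Your proposal is correct and is exactly the paper's argument: the corollary is stated immediately after Fact \ref{fact:rrosy} precisely because it follows by combining that fact with the weak elimination of imaginaries established in Theorem \ref{thm:WEI}. Nothing further is needed.
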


In checking real rosiness for our specific examples, the following facts from \cite{Adgeo} will be useful. Recall that a ternary relation $\ind$ satisfies \emph{base monotonicity} if, for all $A,B,C,D\subset\M$, with $D\seq C\seq B$, if $A\ind_D B$ then $A\ind_C B$.

\begin{definition}\label{def:mod}
A theory $T$ is \textbf{modular} if $\ind^a$ satisfies base monotonicity in $\M$.
\end{definition}

Recall that if $\acl$ satisfies Steinitz exchange in $T$, then the resulting dimension function is used to define a notion of ``modularity" for $T$. One may show that, in this case, the two notions are equivalent (see \cite{Adthesis}, \cite{Adgeo} for details).

\begin{proposition}\label{prop:alg}
Suppose $T$ is a complete theory and $\M$ is a monster model of $T$.
\begin{enumerate}[$(a)$]
\item $T$ is modular if and only if, for all algebraically closed sets $A,B,C\subset\M$, with $C\seq B$, we have $\acl(AC)\cap B=\acl((A\cap B)C)$. 
\item Suppose $T$ is modular. Then, for any $A,B\subset\M$, there is $C\seq B$, with $|C|<\max\{|\acl(A)|^+,\aleph_0\}$, such that $A\ind^a_C B$.
\item If $T$ is modular then $\ind^a=\ind^\thrn$ in $\M$; so $\ind^\thrn$ satisfies local character in $\M$.
\item If algebraic closure in $T$ is disintegrated then $T$ is modular.
\end{enumerate}
\end{proposition}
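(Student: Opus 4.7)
I plan to prove the four parts in order, since each is either a direct consequence of the definition of $\ind^a$ or builds on a previous part. For (a), the forward direction uses that when $A,B$ are algebraically closed we trivially have $A \ind^a_{A\cap B} B$, because $\acl(A)\cap\acl(B)=A\cap B$. Applying base monotonicity along the chain $A\cap B \seq (A\cap B)C \seq B$ yields $A \ind^a_{(A\cap B)C} B$, which unfolds to $\acl(AC)\cap B = \acl((A\cap B)C)$. Conversely, given this identity and $A \ind^a_D B$ with $D\seq C\seq B$, I apply the identity to the closed sets $\acl(AD)$, $\acl(B)$, $\acl(C)$ (noting $\acl(C)\seq\acl(B)$) and use $\acl(AD)\cap\acl(B)=\acl(D)\seq\acl(C)$ to collapse the right-hand side to $\acl(C)$. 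This gives $\acl(AC)\cap\acl(B)=\acl(C)$, i.e., $A \ind^a_C B$.

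For (b), the idea is to choose $C\seq B$ witnessing algebraicity of each element of $\acl(A)\cap\acl(B)$. For each $x\in\acl(A)\cap\acl(B)$ pick a finite $F_x\seq B$ with $x\in\acl(F_x)$, and set $C=\bigcup_x F_x$. A routine cardinal count gives $|C|<\max\{|\acl(A)|^+,\aleph_0\}$, and by construction $\acl(C)\supseteq\acl(A)\cap\acl(B)$. Applying the identity of (a) to the closed sets $\acl(A),\acl(B),\acl(C)$ yields $\acl(AC)\cap\acl(B)=\acl((\acl(A)\cap\acl(B))\,C)=\acl(C)$, which is precisely $A \ind^a_C B$.

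For (c), I would first show that modularity makes $\ind^M$ coincide with $\ind^a$: the implication $A\ind^M_C B\Rightarrow A\ind^a_C B$ is immediate by taking $D=C$, and conversely, using that $A\ind^a_C B$ is equivalent to $A\ind^a_C\acl(BC)$, base monotonicity supplies $A\ind^a_D\acl(BC)$, and hence $A\ind^a_D B$, for any $C\seq D\seq\acl(BC)$. The remaining step is to upgrade $\ind^a$ to the extension-closed $\ind^\thrn$: given $A\ind^a_C B$ and $\hat{B}\supseteq B$, one uses part (b) together with saturation of $\M$ to realize $\tp(A/BC)$ by some $A'$ whose algebraic closure over $C$ meets $\acl(\hat{B}C)$ only in $\acl(C)$. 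This extension step is the main technical obstacle, and I expect to follow the standard argument recalled in \cite{Adgeo}. Local character of $\ind^\thrn=\ind^a$ in the real sort is then immediate from (b).

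Part (d) is a short direct argument. Assume $\acl$ is disintegrated, $A\ind^a_D B$, and $D\seq C\seq B$; for $x\in\acl(AC)\cap\acl(B)$, disintegration gives a singleton $e\in A\cup C$ with $x\in\acl(e)$. If $e\in C$ then $x\in\acl(C)$; if $e\in A$ then $x\in\acl(A)\cap\acl(B)\seq\acl(AD)\cap\acl(BD)=\acl(D)\seq\acl(C)$. Either way $x\in\acl(C)$, so $A\ind^a_C B$ and $T$ is modular.
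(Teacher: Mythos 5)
Your proposal is correct and matches the paper's treatment: part (b) is proved by exactly the same construction (collecting finite witnesses $C_d\seq B$ for each element of $\acl(A)\cap\acl(B)$ and applying the identity from part (a)), your arguments for (a) and (d) are the natural ones and check out, and the one step you leave to the literature---that $\ind^a=\ind^M$ already satisfies the extension property forced in the definition of $\ind^\thrn$---is precisely the step the paper itself outsources to \cite{Adgeo}. If anything, your writeup supplies more detail than the paper, which cites \cite[Proposition 1.5]{Adgeo} for parts (a) and (c) and only proves (b) explicitly.
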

\begin{proof}
Part $(d)$ follows easily from part $(a)$. Parts $(a)$ and $(c)$ can be found in \cite[Proposition 1.5]{Adgeo}, which, moreover, includes a general argument that $\ind^a$ satisfies local character, even without the modularity assumption. However, our formulation of part $(b)$ uses modularity to conclude a stronger bound on the cardinal $\kappa(A)$ in the local character axiom, and so we detail the argument.

Part $(b)$. Let $D=\acl(A)\cap\acl(B)$. For any singleton $d\in D$, we may fix a finite subset $C_d\seq B$ such that $d\in\acl(C_d)$. Let $C=\bigcup_{d\in D} C_d\seq B$. Then $D\seq\acl(C)$ and $|C|<\max\{|\acl(A)|^+,\aleph_0\}$. Let $A'=\acl(A)$, $B'=\acl(B)$ and $C'=\acl(C)$. Then $C'\seq B'$ and so, using part $(a)$, we have
$$
\acl(AC)\cap\acl(BC)\seq \acl(A'C')\cap B'=\acl((A'\cap B')C')=\acl(DC')=\acl(C).
$$
Therefore $A\ind^a_C B$, as desired.
\end{proof}

Altogether, if $T$ is modular with weak elimination of imaginaries, then $T$ is rosy and $\ind^a=\ind^\thrn$ in $\M$ (and also in $\M^{\eq}$; see Lemma \ref{GEI}). Therefore, we have following conclusion.

\begin{corollary}\label{cor:FArosy}
If $T$ is a modular free amalgamation theory then $T$ is rosy. 
\end{corollary}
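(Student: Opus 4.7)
The proof is essentially a direct assembly of results already established in the paper. The plan is as follows.

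First, I would invoke Theorem \ref{thm:WEI} to observe that any free amalgamation theory $T$ has weak elimination of imaginaries. Then, by Fact \ref{fact:rrosy}, to conclude that $T$ is rosy it is enough to verify that $T$ is real rosy, i.e., that $\ind^{\thrn}$ satisfies local character when restricted to $\M$ (rather than to $\M^{\eq}$).

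Next, I would use the modularity assumption. By Proposition \ref{prop:alg}$(c)$, modularity of $T$ yields $\ind^a = \ind^{\thrn}$ as ternary relations on $\M$, and the same proposition tells us that $\ind^{\thrn}$ then satisfies local character in $\M$ (with the bound on $\kappa(A)$ coming from Proposition \ref{prop:alg}$(b)$). This is exactly real rosiness, so combining with the previous step via Fact \ref{fact:rrosy} gives rosiness of $T$.

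There is no real obstacle here: both ingredients, weak elimination of imaginaries and local character for $\ind^{\thrn}$ on real points under modularity, have already been proved, and the corollary is just the observation that they combine through Fact \ref{fact:rrosy}. The only thing I would be mildly careful about is phrasing, since rosiness is by definition a property of $T^{\eq}$, and the whole point of Fact \ref{fact:rrosy} is to reduce it to the behavior of $\ind^{\thrn}$ on real tuples; invoking weak elimination of imaginaries from Theorem \ref{thm:WEI} is the move that legitimizes that reduction.
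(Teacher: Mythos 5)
Your proposal is correct and follows exactly the paper's route: Theorem \ref{thm:WEI} gives weak elimination of imaginaries, Proposition \ref{prop:alg}$(c)$ gives local character of $\ind^{\thrn}$ in $\M$ from modularity, and Fact \ref{fact:rrosy} combines the two. This is precisely the argument the paper gives in the paragraph preceding the corollary.
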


Recall that $\acl$ is disintegrated in Examples \ref{ex}.\ref{ex1} and \ref{ex}.\ref{ex2}, and so these theories are modular by Proposition \ref{prop:alg}$(d)$. Combined with the fact that $\acl$ is locally finite, we can use Proposition \ref{prop:alg} to conclude that the structures in these examples yield \emph{superrosy} theories (i.e. in $\M^{\eq}$, $\ind^{\thrn}$ satisfies the strengthening of local character obtained by demanding $\kappa(A)=\aleph_0$ for all finite $A$). Note that superrosiness is also a property of $T^{\eq}$ and so, to justify the previous remark, one must verify that Fact \ref{fact:rrosy} still holds when ``rosy" is replaced by ``superrosy". We again leave this to the reader, and instead turn our attention to calculating the $U^\thrn$-rank of these examples.

\begin{definition}
Suppose $T$ is a complete theory and $\M$ is a monster model of $T$. Given $n<\omega$, $U^{\thrn}(T)\geq n$ if there is a singleton $a\in\M$ and subsets $\emptyset=B_0\seq B_1\seq\ldots\seq B_n\subset\M^{\eq}$ such that $a\nind^{\thrn}_{B_i}B_{i+1}$ for all $i<n$.
\end{definition}

Similar to before, if $T$ has weak elimination of imaginaries, then the subsets $B_i$ in the previous definition may be taken from $\M$. We can now calculate the $U^{\thrn}$-rank of the structures in Examples \ref{ex}.\ref{ex1} and \ref{ex}.\ref{ex2}. For Example \ref{ex}.\ref{ex1}, the following observation implies that the $U^{\thrn}$-rank is $1$.

\begin{proposition}\label{prop:triv}
Suppose $T$ is modular with weak elimination of imaginaries. Then $U^\thrn(T)=1$ if and only if algebraic closure in $\M$ satisfies Steinitz exchange.
\end{proposition}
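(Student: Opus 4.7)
The plan is to work with real parameters throughout, justified by weak elimination of imaginaries (via a standard forking-calculus reduction analogous to Fact \ref{fact:rrosy}), and to use Proposition \ref{prop:alg}$(c)$ to identify $\ind^\thrn$ with $\ind^a$ on $\M$. Both directions of the equivalence then reduce to a concrete pregeometric calculation.

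The key preliminary lemma I would prove first is that, under Steinitz exchange for $\acl$, a singleton $a$ satisfies $a \nind^a_E F$ if and only if $a \in \acl(EF) \setminus \acl(E)$. The ``if'' direction is immediate, with $a$ itself as the witness. For ``only if,'' take $d \in \acl(aE) \cap \acl(EF) \setminus \acl(E)$; since $d \in \acl(aE) \setminus \acl(E)$ and $a$ is a singleton, Steinitz exchange yields $a \in \acl(dE) \seq \acl(EF)$, and $a \notin \acl(E)$ (otherwise $d \in \acl(aE) = \acl(E)$, a contradiction).

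The $(\Leftarrow)$ direction is then essentially immediate: if $U^\thrn(T) \geq 2$ were witnessed by a singleton $a$ and a chain $\emptyset \seq B_1 \seq B_2$, the lemma applied at both levels would give $a \in \acl(B_1) \setminus \acl(\emptyset)$ and $a \in \acl(B_2) \setminus \acl(B_1)$, contradicting $\acl(B_1) \seq \acl(B_2)$; the rank is exactly $1$ as soon as $T$ has any nonalgebraic element. For the $(\Rightarrow)$ direction, assume $U^\thrn(T) = 1$ and suppose $a \in \acl(Ab) \setminus \acl(A)$ for singletons $a, b$; the goal is $b \in \acl(Aa)$. The element $a$ itself witnesses $a \nind^a_A b$, so $a \nind^\thrn_A b$; by symmetry and monotonicity of $\ind^\thrn$, $b \nind^\thrn_A Aa$. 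If we had $b \notin \acl(Aa)$, then $b$ would witness $b \nind^\thrn_{Aa} Aab$, so the chain $A \seq Aa \seq Aab$ would give $U^\thrn(b/A) \geq 2$; combined with the general inequality $U^\thrn(b/\emptyset) \geq U^\thrn(b/A)$, this contradicts $U^\thrn(T) = 1$, and hence $b \in \acl(Aa)$.

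The main technical point requiring care is the reduction of $U^\thrn$-rank computations from $\M^{\eq}$ down to real parameters under weak elimination of imaginaries, which is what allows Proposition \ref{prop:alg}$(c)$ to be applied directly; this is analogous to Fact \ref{fact:rrosy} and is a standard exercise in forking calculus. Once this reduction is in place, the pregeometric reformulation of $\ind^a$ drives the argument in both directions almost mechanically.
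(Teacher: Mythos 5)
Your proposal is correct and follows essentially the same route as the paper: after reducing to real parameters via weak elimination of imaginaries and identifying $\ind^\thrn$ with $\ind^a$ via modularity (Proposition \ref{prop:alg}$(c)$), your forward direction produces the same pregeometric rank-$2$ chain (you build it for the singleton $b$ over $A\seq Aa\seq Aab$, while the paper argues contrapositively and builds it for $a$ over $C\seq bC\seq abC$). The only substantive difference is that you write out the reverse direction, which the paper leaves to the reader with a citation to \cite{EaOn}; your lemma characterizing $a\nind^a_E F$ for singletons under exchange is exactly the standard argument for that step.
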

\begin{proof}
The reverse direction is left to the reader, and in fact holds just under the assumption of geometric elimination of imaginaries (see \cite[Theorem 4.12]{EaOn}). For the forward direction, if $\acl$ fails exchange then we may fix some $a,b\in\M$, and $C\subset\M$ such that $b\in\acl(aC)\backslash\acl(C)$ and $a\not\in\acl(bC)$. In other words, $a\nind^a_C b$ and $a\nind^a_{bC} a$. Since $T$ is modular with weak elimination of imaginaries, this gives $U^{\thrn}(a/C)\geq 2$.
\end{proof}

For Example \ref{ex}.\ref{ex2}, we first set some notation (taken from \cite{PaSOP4}). Given $n\geq 3$, let $T_n=\Th(\cG_n)$ denote the theory of the generic $(K_n+K_3)$-free graph. A singleton $a\in\M\models T_n$ is \textit{type I} if it lies on exactly one $K_n$ in $\M$, and on no $K_3$ other than those occurring as subgraphs of this $K_n$. It is easy to see that type I vertices exist in $\M$. For example, consider the graph obtained by freely amalgamating two copies of $K_n$ over $K_{n-1}$. This graph is $(K_n+K_3)$-free and so we may assume it is a subgraph of $\M$. Moreover, the two vertices not on the common $K_{n-1}$ are each type I. One may also show that if $a$ is type I then $\acl(a)$ is precisely the unique $K_n$ on which $a$ lies. The following technical observations follow from the analysis of algebraic closure found in \cite{CSS} or \cite{PaSOP4}.

\begin{lemma}\label{lem:aclbow}
Fix $n\geq 3$ and let $\M\models T_n$. If $a,b\in\M$ are singletons such that $b\in\acl(a)$ and $a\not\in\acl(b)$, then $a$ is type I and $\acl(a)=\acl(b)\cup\{a\}$. Conversely, if $a$ is type I then $\acl(a)\backslash\{a\}$ is nonempty and $a\not\in\acl(b)$ for any $b\in\acl(a)\backslash\{a\}$.
\end{lemma}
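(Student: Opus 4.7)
The plan is to unwind the description of algebraic closure in $T_n$ developed in \cite{CSS} and \cite{PaSOP4}. What is needed from that analysis is a characterization of $\acl(a)$ in terms of the $K_n$'s and extra $K_3$'s through $a$: $\acl(a)\ne\{a\}$ only if $a$ lies on some $K_n$; if $a$ is type I with unique $K_n$ $K=\{a,v_1,\dots,v_{n-1}\}$ through it, then $\acl(a)=K$; and otherwise the $(K_n+K_3)$-free constraint forces $\acl(a)$ to be ``symmetric'' between $a$ and any other element of $\acl(a)\setminus\{a\}$, in the sense that $a$ belongs to $\acl$ of any such element. The asymmetry witnessed by the lemma thus reflects the distinctive feature of type-I vertices.

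For the converse direction, assume $a$ is type I with unique $K_n$ $K$. Then $\acl(a)\setminus\{a\}=\{v_1,\dots,v_{n-1}\}$ is nonempty since $n\ge 3$. Fix $b\in\acl(a)\setminus\{a\}$, without loss of generality $b=v_1$. To show $a\notin\acl(b)$ I would exhibit infinitely many type-I vertices $u\in\M$, each adjacent to precisely $K\setminus\{a\}$ and no other vertex, so that $K_u:=(K\setminus\{a\})\cup\{u\}$ is the unique $K_n$ through $u$. Such $u$'s are abundant in $\cG_n$ by a routine genericity argument: the finite graph on $K\cup\{u\}$ with these adjacencies is $(K_n+K_3)$-free, hence embeds into $\cG_n$ in infinitely many ways. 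For each such $u$, both $K$ and $K_u$ are algebraically closed (each is the unique $K_n$ through its type-I generator), and the $\cL$-isomorphism $K\to K_u$ fixing $K\setminus\{a\}$ and sending $a\mapsto u$ extends by homogeneity to some $\sigma\in\Aut(\M/b)$ with $\sigma(a)=u$. Hence $\cO(a/b)$ is infinite and $a\notin\acl(b)$.

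For the forward direction, suppose $b\in\acl(a)$ and $a\notin\acl(b)$; then $a\ne b$, and by the description of $\acl$, $a$ and $b$ share a common $K_n$ $K$. I argue contrapositively: if $a$ is not type I, then $a\in\acl(b)$. If $a$ is on no $K_n$, then $\acl(a)=\{a\}$ forces $b=a$, absurd. Otherwise, either $a$ lies on two or more $K_n$'s, or $a$ lies on a unique $K_n$ with an extra $K_3$ outside it; in either subcase, the structure theorem from \cite{CSS}/\cite{PaSOP4} shows that the combinatorial witness for $b\in\acl(a)$ is symmetric in $a,b$, yielding $a\in\acl(b)$. Hence $a$ is type I and $\acl(a)=K$. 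Applying the converse direction to $b$ would give $a\in\acl(b)$ if $b$ were type I, contradicting the hypothesis, so $b$ is not type I; by the description, $\acl(b)$ is then the canonical portion of $b$'s bouquet of $K_n$'s, which under these constraints is precisely $K\setminus\{a\}$, giving $\acl(a)=K=\acl(b)\cup\{a\}$.

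The main technical obstacle is the case analysis in the forward direction, which requires the detailed bookkeeping in \cite{CSS}/\cite{PaSOP4} for how multiple $K_n$'s, or extra $K_3$'s, through $a$ interact under the $(K_n+K_3)$-free constraint, and in particular how the ``canonical'' content of the resulting bouquet is isolated inside $\acl(a)$. Once that description is in hand, both the symmetry in the forward direction and the genericity-based construction of alternative $K_n$-extensions in the converse follow in a routine way from $\aleph_0$-categoricity and homogeneity of $\cG_n$.
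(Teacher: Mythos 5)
The paper does not actually prove Lemma \ref{lem:aclbow}: it is stated as a ``technical observation'' that ``follows from the analysis of algebraic closure found in \cite{CSS} or \cite{PaSOP4},'' with no further argument. Your proposal follows the same route --- deferring the structural description of $\acl$ in $\cG_n$ to those references and sketching how the lemma drops out --- so in spirit it matches the paper. The forward direction of your sketch is essentially a restatement of what the cited analysis provides (the asserted ``symmetry of the combinatorial witness'' when $a$ is not type I, and the identification of $\acl(b)$ with $K\setminus\{a\}$ are exactly the facts being outsourced), which is acceptable given that the paper does the same.

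There is, however, one step in your converse direction that would fail as written. You claim a ``routine genericity argument'' produces infinitely many vertices $u$ ``adjacent to precisely $K\setminus\{a\}$ \emph{and no other vertex}.'' Existential closedness of $\cG_n$ only lets you prescribe adjacencies and non-adjacencies to a \emph{finite} set: the finite $(K_n+K_3)$-free graph $K\cup\{u\}$ embeds as a weak substructure, but the image of $u$ may acquire arbitrary further neighbors in $\cG_n$, may lie on additional $K_n$'s or stray $K_3$'s, and hence need not be type I. In that case $K_u$ need not equal $\acl(u)$, need not be algebraically closed, and the homogeneity step (extending the isomorphism $K\to K_u$ over $b$) breaks down, since extendability is only guaranteed for isomorphisms between finite \emph{closed} subsets. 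The repair requires the stronger genericity of $\cG_n$ with respect to closed (strong) embeddings from \cite{CSS}, \cite{PaSOP4} --- i.e., that the free amalgam of $\cG_n$ with $K_u$ over the closed set $\acl(K\setminus\{a\})$ embeds with closed image --- or, in the language of this paper, the full existence axiom for $\ind^{fa}$ established in Proposition \ref{prop:ex} together with the explicit identification of $\acl(b)$. Note also that one cannot shortcut this by applying full existence over $C=\acl(b)$ directly, since concluding that the new conjugates are distinct from the old ones requires already knowing $a\notin\acl(b)$, which is circular; the explicit computation of $\acl(b)$ from \cite{CSS} is genuinely needed.
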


\begin{theorem}\label{thm:bowtie}
For all $n\geq 3$, $U^{\thrn}(T_n)=2$.
\end{theorem}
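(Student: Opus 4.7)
The strategy is to compute $U^\thrn$-rank through algebraic closure. Since $\acl$ is disintegrated in $T_n$, Proposition \ref{prop:alg}$(d)$ gives modularity, and combined with the weak elimination of imaginaries from Theorem \ref{thm:WEI}, the standard reduction lets us compute the $U^\thrn$-rank of a real singleton using real, algebraically closed parameters, with $\ind^\thrn = \ind^a$ on $\M$ by Proposition \ref{prop:alg}$(c)$. Disintegration further simplifies the forking condition: for closed $B \seq B' \subset\M$, $a \nind^\thrn_B B'$ precisely when $(\acl(a) \cap B') \setminus B$ is nonempty.

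For $U^\thrn(T_n) \geq 2$, fix a type I vertex $a$ and write $\acl(a) = \{a, b_1, \ldots, b_{n-1}\}$, the vertex set of the unique enveloping $K_n$. Set $B_0 = \emptyset$, $B_1 = \acl(b_1)$, and $B_2 = \acl(\{b_1,\ldots,b_{n-1}\})$. The element $b_1 \in (\acl(a)\cap B_1)\setminus B_0$ witnesses $a \nind^\thrn_{B_0} B_1$. Lemma \ref{lem:aclbow} ensures $a \notin \acl(b_1) = B_1$, so $a$ itself belongs to $(\acl(a)\cap B_2)\setminus B_1$ and witnesses $a \nind^\thrn_{B_1} B_2$.

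For $U^\thrn(T_n) \leq 2$, suppose for contradiction there is a singleton $a$ and a closed set $B_0$ with $U^\thrn(a/B_0) \geq 3$, witnessed by a chain of closed sets $B_0 \seq B_1 \seq B_2 \seq B_3$ and elements $c_{i+1} \in (\acl(a) \cap B_{i+1}) \setminus B_i$ for $i = 0, 1, 2$. Non-$\thrn$-independence at stage $i$ forces $a \notin B_i$; in particular $a \notin B_2$, so $c_1, c_2 \neq a$. Since $c_i \in B_i$ and $B_i$ is closed, $\acl(c_i) \seq B_i$, hence $a \notin \acl(c_i)$ for $i = 1, 2$. Lemma \ref{lem:aclbow} now forces $a$ to be type I with $\acl(a) = \acl(c_1) \cup \{a\} = \acl(c_2) \cup \{a\}$, so $c_2 \in \acl(c_1) \seq B_1$, contradicting $c_2 \notin B_1$.

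The main technical step to absorb is the reduction from the official $\M^{\eq}$ definition of $U^\thrn$-rank to a calculation with real parameters; this is standard under weak elimination of imaginaries once modularity identifies $\ind^\thrn$ with $\ind^a$. Everything else is a short application of Lemma \ref{lem:aclbow}, whose rigidity (that $\acl(a) \setminus \{a\}$ is already algebraically closed when $a$ is type I) is precisely what prevents the rank from climbing past $2$.
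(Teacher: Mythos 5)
Your upper bound is correct and is essentially the paper's argument: reduce to real, algebraically closed parameters via weak elimination of imaginaries and modularity, use disintegration to force the witnessing singletons into $\acl(a)$, and then let the rigidity in Lemma \ref{lem:aclbow} collapse any chain of length $3$. Your bookkeeping (deducing $a\notin B_i$ from non-independence at stage $i$, then applying the lemma to both $c_1$ and $c_2$) differs only cosmetically from the paper's.

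The lower bound, however, has a genuine error. You take $B_2=\acl(\{b_1,\ldots,b_{n-1}\})$ and claim $a\in(\acl(a)\cap B_2)\setminus B_1$, but $a\notin B_2$: since $\acl$ is disintegrated, $B_2=\bigcup_{i}\acl(b_i)$, and the converse direction of Lemma \ref{lem:aclbow} gives $a\notin\acl(b_i)$ for every $b_i\in\acl(a)\setminus\{a\}$. Worse, the forward direction of that lemma gives $\acl(b_i)=\acl(a)\setminus\{a\}=\{b_1,\ldots,b_{n-1}\}$ for each $i$, so in fact $B_2=\{b_1,\ldots,b_{n-1}\}=B_1$ and the second non-independence $a\nind^{\thrn}_{B_1}B_2$ fails outright. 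By your own criterion, the second step of the chain needs an element of $\acl(a)$ lying in $B_2$ but not in $B_1$, and the only candidate is $a$ itself; so you must put $a$ into $B_2$. Taking $B_2=\acl(a)=\{a,b_1,\ldots,b_{n-1}\}$ repairs the argument (then $a\in(\acl(a)\cap B_2)\setminus\acl(b_1)$ witnesses $a\nind^{a}_{B_1}B_2$), and this is exactly what the paper does implicitly by routing the lower bound through Proposition \ref{prop:triv}: the pair $b_1\in\acl(a)\setminus\acl(\emptyset)$, $a\notin\acl(b_1)$ is a failure of exchange, and the two witnessing non-independences there are $a\nind^{a}_{\emptyset}b_1$ and $a\nind^{a}_{b_1}a$.
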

\begin{proof}
We have $U^{\thrn}(T_n)\geq 2$ by Proposition \ref{prop:triv}, Lemma \ref{lem:aclbow}, and the fact that $\acl(\emptyset)=\emptyset$. For the other direction, recall that by weak elimination of imaginaries and modularity, we may work in $\M$ with $\ind^\thrn=\ind^a$. Suppose, toward a contradiction, there is a singleton $a\in\M$ and $B_0\seq B_1\seq B_2\seq B_3\subset\M$ such that $a\nind^a_{B_i}B_{i+1}$ for all $i<3$. Given $i<3$, fix a singleton $b_{i+1}\in(\acl(aB_i)\cap\acl(B_{i+1}))\backslash\acl(B_i)$. Since $\acl$ is disintegrated, we must have $b_{i+1}\in\acl(a)$ for all $i<3$.  

Since $b_1\in\acl(B_1)$ and $b_2\in\acl(a)\backslash\acl(B_1)$, we have $a\not\in\acl(b_1)$. By Lemma \ref{lem:aclbow}, $a$ is type I and $\acl(a)\seq\acl(B_1)\cup\{a\}$. Then $b_2=a$, which contradicts $b_2\in\acl(B_2)$ and $b_3\in\acl(a)\backslash\acl(B_2)$. 
\end{proof}

Finally, for the sake of completeness, we summarize the previously known result that the Hrushovski constructions in Example \ref{ex}.\ref{ex3} are rosy. This argument works in general, and does not require the assumptions we have imposed in order to obtain free amalgamation theories. Let $\M_f$ be a monster model of $\Th(\cM_f)$. Consider the relation $A\ind^{\dim}_C B$ if and only if $A\ind^a_C B$ and, for all finite $a\in A$, $d(a/BC)=d(a/C)$ (see \cite{Evans}, \cite{EvWo}).\footnote{In the literature, the notation for this ternary relation is $\ind^d$. We use $\ind^{\dim}$ to avoid confusion with nondividing. However, if $\Th(\cM_f)$ is simple then it follows from work in \cite{Evans,EvWo} that $\ind^{\dim}$ coincides with nonforking (and thus also nondividing).} By results in \cite{Evans}, $\ind^{\dim}$ satisfies the axioms of a \emph{strict independence relation} (see \cite{Adgeo}), and so, by \cite[Theorem 4.3]{Adgeo}, $\ind^\thrn$ satisfies local character in $\M_f$ (this fact is observed by Wong in \cite{Wothesis}). Using weak elimination of imaginaries (shown in \cite{Wothesis}), it follows that $\Th(\cM_f)$ is rosy. As noted in \cite{Evans}, if the predimension $d$ is discrete then $\ind^{\dim}$ satisfies the strengthening of local character required to conclude that $\Th(\cM_f)$ is superrosy.

\section{Simplicity}\label{sec:simple}

Many free amalgamation theories are known to be much more well-behaved than what we have shown so far, in particular because they are simple (and therefore $\NSOP_3$). For example, this is true for the random graph and generic $K^r_n$-free hypergraphs (with $r>2$). Moreover, simplicity of Hrushovski constructions is a well-studied topic (see \cite{Evans}). On the other hand, the documented examples of non-simple free amalgamation theories all exhibit a gap in complexity, in the sense that they have both $\SOP_3$ and $\TP_2$. In this section, we investigate the persistence of this behavior. 

Fix a complete first-order theory $T$ and a monster model $\M$. We first define $\TP_2$; and then we give a reformulation of $\SOP_3$ resembling \cite[Claim 2.19]{Sh500}.

\begin{definition}\label{def:TP2}\textcolor{blue}{(This definition not quite right; see Section \ref{sec:C1}.)} 
$T$ has the \textbf{tree property of the second kind}, $\TP_2$, if there are tuples $a,b\in\M$, an array $(b^m_n)_{m,n<\omega}$ in $\M$, and an integer $k<\omega$ such that
\begin{enumerate}[$(i)$]
\item for all $m<\omega$ and $n_1<\ldots<n_k<\omega$, there does not exist a tuple $a_*$ such that $a_*b^m_{n_i}\equiv ab$ for all $1\leq i\leq k$;
\item for all $\sigma:\omega\func\omega$ there is a tuple $a_*$ such that $a_*b^m_{\sigma(m)}\equiv ab$ for all $m<\omega$.
\end{enumerate}
$T$ is $\NTP_2$ if is does not have $\TP_2$.
\end{definition}

\begin{proposition}\label{SOP3} \textcolor{blue}{(This proposition is not quite right; see Section \ref{sec:C1}.)}
$T$ has $\SOP_3$ if and only if there are sequences $(a_i)_{i<\omega}$, $(b_i)_{i<\omega}$ and types $p(x,y)$, $q(x,y)$, with $\ell(x)=\ell(a_0)$ and $\ell(y)=\ell(b_0)$, such that
\begin{enumerate}[$(i)$]
\item $p(a_i,b_j)$ for all $i<j$ and $q(a_i,b_j)$ for all $i\geq j$;
\item for all $i<j$, $p(x,b_i)\cup q(x,b_j)$ is inconsistent.
\end{enumerate}
\end{proposition}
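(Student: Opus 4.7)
The plan is to prove both directions of this equivalence. The underlying idea is that the configuration in conditions (i)--(ii) encodes the same order-like obstruction as the cyclic type-inconsistency defining $\SOP_3$, with $p$ playing the role of ``before'' and $q$ of ``after or equal'' relative to the $b$-indexing.

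For the forward direction, I would start with an indiscernible sequence $(c_i)_{i<\omega}$ witnessing $\SOP_3$, with $r(x,y)=\tp(c_0,c_1)$. Set $b_i:=c_{2i}$, $a_i:=c_{2i+1}$, $p(x,y):=\tp(c_0,c_1)$, and $q(x,y):=\tp(c_1,c_0)$. Condition (i) is immediate from indiscernibility, since $2i+1<2j$ iff $i<j$ and $2i+1>2j$ iff $i\geq j$. For (ii), a realization $d$ of $p(x,b_i)\cup q(x,b_j)$ for some $i<j$ gives $r(d,c_{2i})$ and $r(c_{2j},d)$; combined with $r(c_{2i},c_{2j})$ from indiscernibility, this produces an $r$-cycle on $(c_{2j},d,c_{2i})$, contradicting $\SOP_3$.

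For the reverse direction, I would extract from $(a_i,b_i)_{i<\omega}$ an indiscernible sequence $(a'_i,b'_i)_{i<\omega}$ realizing the EM-type of the original (over $\emptyset$). This preserves (i) directly, and a compactness argument also preserves the inconsistency of $p(x,b'_i)\cup q(x,b'_j)$ for $i<j$, since each such inconsistency is witnessed by a finite subset of $p\cup q$ and hence by a formula in the EM-type. Setting $r:=\tp(a'_0b'_0,a'_1b'_1)$, a complete type over $\emptyset$, I would then show the cycle $r(z_1,z_2)\cup r(z_2,z_3)\cup r(z_3,z_1)$ is inconsistent: assuming a realization $(d_1e_1,d_2e_2,d_3e_3)$, projecting each pairwise realization of $r$ onto the relevant coordinates yields $p(d_3,e_1)$ (from $(d_3e_3,d_1e_1)$), $q(d_3,e_2)$ (from $(d_2e_2,d_3e_3)$), and $(e_1,e_2)\equiv(b'_0,b'_1)$ (from $(d_1e_1,d_2e_2)$). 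An automorphism moving $(e_1,e_2)$ to $(b'_0,b'_1)$ sends $d_3$ to a realization of $p(x,b'_0)\cup q(x,b'_1)$, contradicting (ii). The main technical point is recognizing that the asymmetric roles of $p$ and $q$ baked into $r$ force every cyclic $r$-triple to produce exactly this forbidden $p/q$-configuration; once the projections of $r$ onto coordinates are tracked carefully, the rest of the argument is essentially automatic.
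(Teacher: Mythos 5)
Your forward direction is correct, and it is a genuine addition: the paper proves only the reverse implication and leaves the forward one to the reader. The interleaving $b_i=c_{2i}$, $a_i=c_{2i+1}$ with $p=\tp(c_0,c_1)$ and $q=\tp(c_1,c_0)$ works exactly as you describe. Your reverse direction follows the same route as the paper's proof: pass to an indiscernible sequence realizing the $\EM$-type of $(a_ib_i)_{i<\omega}$, let $r$ be the type of a consecutive pair, and read off $p(d_3,e_1)$, $q(d_3,e_2)$ and $(e_1,e_2)\equiv(b'_0,b'_1)$ from a hypothetical realization of the $3$-cycle; that bookkeeping is right.

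The gap is in your claim that condition $(ii)$ survives the passage to the indiscernible realization. Your justification --- that ``each such inconsistency is witnessed by a finite subset of $p\cup q$ and hence by a formula in the EM-type'' --- does not work: the finite fragment $p_0\cup q_0$ witnessing the inconsistency of $p(x,b_i)\cup q(x,b_j)$ may depend on the pair $(i,j)$, and the formula $\neg\exists x\,\bigl(\bigwedge p_0(x,y_1)\wedge\bigwedge q_0(x,y_2)\bigr)$ lies in the $\EM$-type only if it holds for \emph{every} pair $i<j$. This is not a cosmetic issue. In the random graph, take distinct vertices $(c_n)_{n<\omega}$, set $b_i=(c_{i+n})_{n<\omega}$, $p(x,y)=\{R(x,y_n):n<\omega\}$, $q(x,y)=\{\neg R(x,y_0)\}$, and choose $a_i$ adjacent to $c_m$ exactly when $m>i$: then $(i)$ and $(ii)$ hold (the witnessing coordinate $c_j=(b_i)_{j-i}$ shifts with the pair), yet the theory is simple, hence $\NSOP_3$; and indeed the $\EM$-type does not force $(b'_1)_0$ to equal any coordinate of $b'_0$, so $p(x,b'_0)\cup q(x,b'_1)$ can become consistent after extraction. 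To be fair, the paper's own proof hides the same point behind ``we may assume $(a_i,b_i)_{i<\omega}$ is indiscernible.'' The statement and both proofs are repaired by adding a uniformity hypothesis, e.g.\ that there are finite $p_0\seq p$ and $q_0\seq q$ with $p_0(x,b_i)\cup q_0(x,b_j)$ inconsistent for all $i<j$; this is automatic in the paper's only application (Theorem \ref{thm:NSOP3}), where the pairs of parameters governing the inconsistency all realize a single complete type over the base, so that every formula of that type is in the $\EM$-type and consistency of $p(x,b'_0)\cup q(x,b'_1)$ depends only on that type.
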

\begin{proof}
We prove the reverse implication (which is the only direction we will use), and leave the forward implication to the reader. Suppose we have $(a_i)_{i<\omega}$, $(b_i)_{i<\omega}$, $p(x,y)$, and $q(x,y)$ as described. We may assume $(a_i,b_i)_{i<\omega}$ is indiscernible \textcolor{blue}{(this does not follow; see Section \ref{sec:C1})}. Let $r(x_0y_0,x_1y_1)=\tp(a_0b_0,a_1b_1)$. If 
$$
(c_1d_1,c_2d_2,c_3d_3)\models r(x_1y_1,x_2y_2)\cup r(x_2y_2,x_3y_3)\cup r(x_3y_3,x_1y_1),
$$
then we have $d_2d_3\equiv b_0b_1$ and  $c_1\models p(x,d_2)\cup q(x,d_3)$, which is a contradiction. Therefore $(a_ib_i)_{i<\omega}$ witnesses $\SOP_3$.
\end{proof}

Finally, we recall definitions of nondividing and simplicity.

\begin{definition}$~$
\begin{enumerate}
\item Let $a,b$ be tuples in $\M$ and $C\subset\M$. Then $\tp(a/bC)$ \textbf{does not divide over $C$}, written $a\ind^d_C b$, if, for every $C$-indiscernible sequence $(b_i)_{i<\omega}$ with $b_0=b$, there is $a'$ such that $a'b_i\equiv_C ab$ for all $i<\omega$. 

\item $T$ is \textbf{simple} if $\ind^d$ is symmetric in $\M$.
\end{enumerate}
\end{definition}

\begin{fact}\label{divacl}\textnormal{\cite[Remark 5.4]{Adgeo}}  \textcolor{blue}{(This fact is false; see Section \ref{sec:Adler}.)} Given $a,b,C\subset\M$, we have $a\ind^d_C b$ if and only if $\acl(aC)\ind^d_{\acl(C)}\acl(bC)$. 
\end{fact}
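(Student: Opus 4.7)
The plan is to decompose the biconditional into two standard manipulations of nondividing: (i) closure of $\ind^d$ under algebraic closure of parameters on both sides, and (ii) invariance of $\ind^d$ under the base change between $C$ and $\acl(C)$. Once both are in hand, the biconditional follows directly.

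For step (i), I would establish $a\ind^d_C b \Leftrightarrow \acl(aC)\ind^d_C\acl(bC)$. The reverse direction is immediate from monotonicity. For the forward direction, fix a $C$-indiscernible sequence $(B_i)_{i<\omega}$ with $B_0=\acl(bC)$, and let $b_i\sqsubset B_i$ be the subtuple corresponding to $b\sqsubset B_0$; then $(b_i)_{i<\omega}$ is $C$-indiscernible with $b_0=b$, and the hypothesis produces some $a^*$ with $a^*b_i\equiv_C ab$ for all $i$. I would then run a compactness-plus-pigeonhole argument over finite fragments: for each finite $i_1<\ldots<i_n$, use the fact that the elements of $\acl(aC)$ are enumerated by algebraic formulas over $aC$ with uniformly bounded solution count, to extend $a^*$ to a uniform $A^*$ with $A^*B_{i_j}\equiv_C\acl(aC)B_0$ for $j=1,\ldots,n$; compactness then produces the required $A^*$ for the whole sequence.

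For step (ii), I would establish $a\ind^d_C b \Leftrightarrow a\ind^d_{\acl(C)} b$. For $\acl(C)\Rightarrow C$: given a $C$-indiscernible $(b_i)$ with $b_0=b$, extract by Ramsey an $\acl(C)$-indiscernible $(b'_i)$ with the same $C$-EM-type, and conjugate by some element of $\Aut(\M/C)$ to arrange $b'_0=b$ (which preserves $\acl(C)$-indiscernibility since $\acl(C)$ is setwise automorphism-invariant). The hypothesis yields $a'$ with $a'b'_i\equiv_{\acl(C)}ab$; since $(b_i)$ and $(b'_i)$ share $C$-EM-types, the consistency of $\bigcup_i\tp(ab/C)|_{xb'_i}$ transfers to consistency of $\bigcup_i\tp(ab/C)|_{xb_i}$ by compactness. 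For $C\Rightarrow\acl(C)$: let $(b_i)$ be $\acl(C)$-indiscernible with $b_0=b$ and let $d$ enumerate $\acl(C)$, so $(b_id)_{i<\omega}$ is $C$-indiscernible with $b_0d=bd$. By step (i) and monotonicity one has $a\ind^d_C bd$, yielding some $a'$ with $a'(b_id)\equiv_C a(bd)$ for all $i$, which gives $a'b_i\equiv_{\acl(C)}ab$ as required.

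Combining the two steps finishes the proof: $(\Rightarrow)$ apply step (i), then step (ii) to pass the base to $\acl(C)$; $(\Leftarrow)$ apply monotonicity on both sides to reduce to $a\ind^d_{\acl(C)}b$, then step (ii) in reverse. The main obstacle I anticipate is the forward direction of step (i), specifically producing a uniform enumeration of $\acl(aC)$ along the sequence $(B_i)$. Since $\acl(aC)$ is determined by $aC$ only up to the finite multiplicities of algebraic formulas, and $B_i=\acl(b_iC)$ as an enumerated tuple by $C$-indiscernibility of $(B_i)$, the finite-fragment compactness argument combined with pigeonhole on these finitely many enumeration choices should produce the required $A^*$; everything else is bookkeeping with EM-types and automorphisms.
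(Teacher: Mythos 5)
The paper offers no proof of this Fact at all---it is quoted from \cite[Remark 5.4]{Adgeo}---so the only question is whether your argument is actually complete. Your step (ii) (moving the base between $C$ and $\acl(C)$ by extracting indiscernibles and by appending an enumeration of $\acl(C)$) is standard and fine, and your pigeonhole for the \emph{left-hand} side (upgrading $a^*$ to an enumeration $A^*$ of $\acl(a^*C)$, using that each finite piece of $\acl(a^*C)$ is one of boundedly many solutions of an algebraic formula over $a^*C$) also works. The gap is in the \emph{right-hand} side of step (i): the passage from the sequence $(b_i)$ to the sequence $(B_i)$ with $B_i=\acl(b_iC)$. You treat this as bookkeeping, but knowing $a^*b_i\equiv_C ab$ gives no control over $\tp(a^*/B_i)$: the automorphism over $C$ carrying $ab$ to $a^*b_i$ carries $B_0$ to \emph{some} enumeration of $\acl(b_iC)$, which is conjugate to $B_i$ over $Cb_i$ but need not equal $B_i$; since $B_0$ is only algebraic, not definable, over $bC$, the type $\tp(A_0B_0/C)$ is strictly more information than $\tp(A_0b/C)$ together with $\tp(B_0/bC)$. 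No pigeonhole over enumerations of $\acl(aC)$ can repair this, because the indeterminacy sits on the parameter side.

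This is not a technicality: it is exactly the forking-versus-dividing phenomenon. A formula $\varphi(x,d)\in\tp(a/\acl(bC))$ with $d\in\acl(bC)$ corresponds, over $bC$, only to $\exists z(\theta(z,b)\wedge\varphi(x,z))$ where $\theta(z,b)$ isolates the finite set of $bC$-conjugates of $d$; this formula implies the \emph{disjunction} of the $\varphi(x,d')$ over those conjugates, and a disjunction of dividing formulas forks but need not divide. So the argument you sketch yields at best that $\tp(a/bC)$ \emph{forks} over $C$ whenever $\tp(\acl(aC)/\acl(bC))$ divides over $\acl(C)$, which is weaker than what the forward direction of the Fact asserts for $\ind^d$. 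You should either find a genuinely different argument for the implication $a\ind^d_C b\Rightarrow a\ind^d_C\acl(bC)$ or consult the cited source for its precise formulation; note that every later use of this Fact in the paper outside the simple case (where $\ind^d$ coincides with nonforking and the issue disappears) invokes only the unproblematic direction $\acl(aC)\ind^d_{\acl(C)}\acl(bC)\Rightarrow a\ind^d_C b$, which your monotonicity and base-restriction arguments do cover.
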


We now return to free amalgamation theories. Given a sequence $(b_i)_{i<\mu}$ in $\M$, and some $i<\mu$, we will use the notation $b_{<i}$ to denote $(b_j)_{j<i}$. 

\begin{definition}
Let $\ind$ be a ternary relation on $\M$. Suppose $\mu$ is an ordinal, $(b_i)_{i<\mu}$ is a sequence of tuples, and $C\seq b_0$. Then $(b_i)_{i<\mu}$ is \textbf{$\ind$-independent over $C$} if, for all $i<\mu$, $b_i\equiv_C b_0$ and $b_i\ind_C b_{<i}$.
\end{definition}

Note that if $(b_i)_{i<\mu}$ is $\ind$-independent over a closed set $C$, $b_0$ is closed, and $\ind$ satisfies closure, then $b_{\leq i}$ is closed for all $i<\mu$. We will tacitly use this observation throughout the section. The next result is a key lemma, which says that if $\ind$ is a free amalgamation relation then $\ind$-independent sequences can only witness dividing exemplified by a failure of $\ind^a$.

\begin{lemma}\label{amalgind}
Suppose $\ind$ is a free amalgamation relation for $T$. Fix closed tuples $a$ and $b$ and let $C=a\cap b$ (so $a\ind^a_C b$). Suppose $(b_i)_{i<\mu}$ is $\ind$-independent over $C$, with $b_0=b$. Then there is $a_*$ such that $a_*b_i\equiv_C ab$ for all $i<\mu$.
\end{lemma}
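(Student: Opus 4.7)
My plan is to construct an alternative $\ind$-independent sequence $(b'_i)_{i<\mu}$ over $C$ with $b'_0=b$ and $ab'_i\equiv_C ab$ for every $i$, and then transport $a$ along the automorphism taking $(b'_i)$ to $(b_i)$. To see this works, I would first establish a uniqueness claim: any two $\ind$-independent sequences over $C$ whose initial elements realize $\tp(b/C)$ have the same type over $C$. Granting this, there will be $\sigma\in\Aut(\M/C)$ sending $(b'_i)$ to $(b_i)$, and $a_*:=\sigma(a)$ will satisfy $a_*b_i=\sigma(ab'_i)\equiv_C ab$, as required.

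For the construction of $(b'_i)$, I would set $b'_0=b$ and inductively use full existence over the closed set $a$ to pick $b'_i\equiv_a b$ with $b'_i\ind_a b'_{<i}$. The crucial observation is that $b'_i\equiv_a b$ forces $a\cap b'_i=C$ (the automorphism fixing $a$ and sending $b$ to $b'_i$ preserves $a\cap b=C$) and also gives $ab'_i\equiv_C ab$ directly. By induction on $i$, $a\cap b'_{<i}=C$ too, so $a\cap(b'_ib'_{<i})\seq C\seq a$; applying the freedom axiom to $b'_i\ind_a b'_{<i}$ with $D=C$ then yields $b'_i\ind_C b'_{<i}$, showing that $(b'_i)$ is $\ind$-independent over $C$.

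The uniqueness claim I would prove by induction on $\mu$, using compactness at limit ordinals. At the successor step, after identifying the initial segments via an automorphism over $C$, stationarity applies to the closed tuples $b_\mu$ and $b'_\mu$, both realizing $\tp(b/C)$, over the closed set $b_{<\mu}$ (closed by iterated use of the closure axiom and containing $C$); stationarity then yields $b_\mu b_{<\mu}\equiv_C b'_\mu b_{<\mu}$, completing the induction. The main subtlety will be the use of freedom to descend from $\ind$-over-$a$ to $\ind$-over-$C$ in the construction step, which is exactly where the hypothesis $C=a\cap b$ (rather than the weaker $a\ind^a_C b$ alone) enters essentially: without it, the induction would only produce independence over the wrong base, and one could not make use of stationarity over $C$ at the end.
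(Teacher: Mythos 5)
Your proof is correct: full existence over the closed tuple $a$, the observation that $b'_i\equiv_a b$ forces $a\cap b'_i=C$ and $ab'_i\equiv_C ab$, the descent from $b'_i\ind_a b'_{<i}$ to $b'_i\ind_C b'_{<i}$ via freedom, and the stationarity step over $C$ with third party $b_{<\mu}$ (closed by iterated use of the closure axiom, with $C\seq b_\mu\cap b_{<\mu}$ as the axiom requires) all check out. Your route is organized differently from the paper's, though it uses the same ingredients. The paper runs a single induction producing approximations $a_n$ with $a_nb_i\equiv_C ab$ for all $i\leq n$: at stage $n$ it applies full existence over the \emph{current} anchor $a_{n-1}$ to get $b'\equiv_{a_{n-1}}b_{n-1}$ with $b'\ind_{a_{n-1}}b_{<n}$, descends the base to $C$ by freedom (using $a_{n-1}\cap b_i=C$, which is exactly where your hypothesis $C=a\cap b$ enters for the paper as well), identifies $b'b_{<n}$ with $b_nb_{<n}$ by stationarity, and moves $a_{n-1}$ to $a_n$ along the resulting automorphism. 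You instead anchor everything at the original $a$ once, build the whole auxiliary $\ind$-independent sequence $(b'_i)$ up front, and defer all the stationarity bookkeeping to a separate uniqueness statement for $\ind$-independent sequences, transporting $a$ by a single automorphism at the end. Your uniqueness claim is precisely the iterated stationarity that the paper performs inline, so nothing new is needed to prove it; what your organization buys is a reusable lemma (uniqueness of $\ind$-independent sequences over a closed base is the hallmark of stationary independence relations) and no moving anchor to track, at the modest cost of having to pass from equality of types to conjugacy for possibly infinite sequences at limit stages --- an issue the paper sidesteps by reducing to $\mu<\omega$ via compactness at the outset, and which you could adopt as well.
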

\begin{proof}
By compactness, it suffices to assume $\mu<\omega$. By induction on $n<\mu$, we will find tuples $a_n$ such that $a_nb_i\equiv_C ab$ for all $i\leq n$. For the base case, set $a_0=a$. Assume we have constructed $a_{n-1}$ as required. By full existence, there is $b'\equiv_{a_{n-1}}b_{n-1}$ such that $b'\ind_{a_{n-1}}b_{<n}$. Note that $C\seq a_{n-1}$.

\noindent\textit{Claim}: $a_{n-1}\cap b'b_{<n}=C$.

\noindent\textit{Proof}: First, since $b'\equiv_{a_{n-1}}b_{n-1}$, we have $a_{n-1}\cap b'= a_{n-1}\cap b_{n-1}$. Therefore, it suffices to show $a_{n-1}\cap b_{<n}=C$. For any $i<n$, we have $a_{n-1}b_i\equiv_C ab$ by induction. Therefore, $a\cap b= C$ implies $a_{n-1}\cap b_i= C$.\claim

By the claim and freedom, we have $b'\ind_C b_{<n}$. We also have $b_n\ind_C b_{<n}$ and $b'\equiv_C b_{n-1}\equiv_C b_n$. Therefore $b'b_{<n}\equiv_C b_nb_{<n}$ by stationarity. Let $a_n\in\M$ be such that $a_{n-1}b'b_{<n}\equiv_C a_nb_nb_{<n}$. If $i<n$ then, by induction, $a_nb_i\equiv_C a_{n-1}b_i\equiv_C ab$. We also have $a_nb_n\equiv_C a_{n-1}b'\equiv_C a_{n-1}b_{n-1}\equiv_C ab$. Therefore $a_n$ is as desired.
\end{proof}

Using this, we obtain the following characterization of simplicity for free amalgamation theories.

\begin{theorem}\label{thm:simp}
Given a free amalgamation theory $T$, the following are equivalent.
\begin{enumerate}[$(i)$]
\item $T$ is simple.
\item $T$ is $\NTP_2$.
\item $\ind^d$ and $\ind^a$ coincide in $\M$.
\end{enumerate}
\end{theorem}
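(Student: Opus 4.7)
The plan is to establish (i) $\Leftrightarrow$ (ii) $\Leftrightarrow$ (iii) by handling the two easy implications directly and concentrating effort on (ii) $\Rightarrow$ (iii). The implication (i) $\Rightarrow$ (ii) is the standard fact that simple theories are $\NTP_2$, and (iii) $\Rightarrow$ (i) is essentially a one-line observation: $\ind^a$ is visibly symmetric, so coincidence with $\ind^d$ in $\M$ gives symmetry of $\ind^d$, i.e., simplicity of $T$.

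For (ii) $\Rightarrow$ (iii), the direction $\ind^d \Rightarrow \ind^a$ always holds. For the reverse, assume $\NTP_2$ and fix $a \ind^a_C b$. Using Fact \ref{divacl}, I would first reduce to the case that $a$, $b$, $C$ are closed with $a \cap b = C$, and set $q(x,y) = \tp(ab/C)$. Given any $C$-indiscernible sequence $(b_i)_{i<\omega}$ with $b_0 = b$, I want $\{q(x, b_i) : i<\omega\}$ to be consistent; suppose toward contradiction it is $k$-inconsistent. The strategy is to build an array that witnesses $\TP_2$, contradicting $\NTP_2$.

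Concretely, extend $(b_i)$ to a long $C$-indiscernible tuple $\bar{b}$ and iterate full existence (applicable since $C$ is closed) to get a sequence of rows $(\bar{b}^m)_{m<\omega}$ that is $\ind$-independent over $C$, each row $\equiv_C \bar{b}$. A standard Erd\H{o}s-Rado extraction then yields a mutually $C$-indiscernible array $(c^m_i)_{m,i<\omega}$ with the same EM-type. Each row $(c^m_i)_i$ is $C$-indiscernible with the EM-type of $(b_i)_i$, hence $k$-inconsistent for $q$. The key subtlety is that the first column $(c^m_0)_m$ remains $\ind$-independent over $C$: in the original, monotonicity gives $b^m_0 \ind_C b^{<m}_0$, and by invariance this is a property of $\tp(b^m_0 b^{<m}_0/C)$, which is preserved by the extraction. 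Lemma \ref{amalgind} then furnishes $a_*$ with $a_* c^m_0 \equiv_C ab$ for every $m$; by mutual indiscernibility, any selector path $(c^m_{\sigma(m)})_m$ is $C$-conjugate to the first column, so every path is consistent. A cosmetic absorption of $C$ into the parameters (replacing each $b^m_n$ with $(b^m_n, c)$ for $c$ enumerating $C$) upgrades this to the unparameterized $\TP_2$ of Definition \ref{def:TP2}, contradicting $\NTP_2$.

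The main obstacle is confirming that the Erd\H{o}s-Rado extraction preserves both the row $k$-inconsistency and the $\ind$-independence of the first column; both reduce to preservation of finite types over $C$, which the extraction provides (the latter via invariance of $\ind$). Once mutual indiscernibility is in hand, Lemma \ref{amalgind} carries the rest of the argument.
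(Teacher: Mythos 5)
Your proposal is correct and follows essentially the same route as the paper: reduce via Fact \ref{divacl} to closed tuples with $a\cap b=C$, build rows (copies of the indiscernible sequence) that are $\ind$-independent over $C$ by iterating full existence, and use Lemma \ref{amalgind} to realize paths of the array while the rows stay $k$-inconsistent, contradicting $\NTP_2$. The only difference is that your Erd\H{o}s--Rado extraction to a mutually indiscernible array is unnecessary: by monotonicity and invariance every selector path $(b^m_{\sigma(m)})_{m}$ of the constructed array is already $\ind$-independent over $C$, so Lemma \ref{amalgind} applies to each path directly.
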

\begin{proof}
$(iii)\Rightarrow(i)\Rightarrow(ii)$ is true for any theory (see \cite{KimSST}, \cite{KiKi}).

$(ii)\Rightarrow(iii)$: Suppose $(iii)$ fails. Recall that, in general, $\ind^d$ implies $\ind^a$ and so, using Fact \ref{divacl}, we may fix a closed set $C$ and closed tuples $a,b$ such that $a\cap b=C$ and $a\nind^d_C b$. Let $(b_i)_{i<\omega}$ be a $C$-indiscernible sequence such that $b_0=b$ and, for some $k<\omega$, there is no tuple $a_*$ such that $a_*b_i\equiv_C ab$ for all $i<k$.

Fix a free amalgamation relation $\ind$, and let $b^0_{<\omega}=b_{<\omega}$. Using the full existence axiom, we inductively construct sequences $b^n_{<\omega}$, for $n<\omega$, such that $b^n_{<\omega}\equiv_C b^0_{<\omega}$ and $b^n_{<\omega}\ind_C b^{<n}_{<\omega}$. We show that $a$, $b$, and $b^{<\omega}_{<\omega}$ together witness $\TP_2$ for $T$.

Fix $n<\omega$. Since $b^n_{<\omega}\equiv_C b^0_{<\omega}$, it follows that there is no tuple $a_*$ such that $a_*b^n_i\equiv_C ab$ for all $i<k$. Next, fix a function $\sigma:\omega\func\omega$. By construction and monotonicity, $(b^n_{\sigma(n)})_{n<\omega}$ is $\ind$-independent over $C$. Let $\hat{a}$ be such that $\hat{a}b^0_{\sigma(0)}\equiv_C ab$. Then $\hat{a}\cap b^0_{\sigma(0)}=C$, and so, by Lemma \ref{amalgind}, there is some $a_*$ such that $a_*b^n_{\sigma(n)}\equiv_C\hat{a}b^0_{\sigma(0)}\equiv_C ab$ for all $n<\omega$. 
\end{proof}

Recall that all of our concrete examples of free amalgamation theories are modular, with locally finite algebraic closure. Therefore, we note the following consequence of the previous theorem.

\begin{corollary}
Suppose $T$ is a simple free amalgamation theory. Then $T$ is modular and, if $T$ has locally finite algebraic closure, then $T$ is supersimple.
\end{corollary}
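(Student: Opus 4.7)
The plan is to deduce both conclusions from Theorem \ref{thm:simp}, which, under the assumption that $T$ is simple, identifies $\ind^d$ with $\ind^a$ throughout $\M$.

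First, I would establish modularity. In any simple theory, $\ind^d$ coincides with nonforking, and nonforking satisfies base monotonicity as part of the standard forking calculus. Transferring this fact via Theorem \ref{thm:simp}, $\ind^a$ inherits base monotonicity in $\M$, which is exactly the content of Definition \ref{def:mod}.

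Next, assuming $\acl$ is locally finite, I would derive supersimplicity. With modularity in hand, Proposition \ref{prop:alg}(b) applies: for every finite $A \subset \M$ and every $B \subset \M$, there is $C \seq B$ with $|C| < \max\{|\acl(A)|^+, \aleph_0\}$ and $A \ind^a_C B$. Local finiteness of $\acl$ forces $|\acl(A)|$ to be finite whenever $A$ is finite, so this bound collapses to $\aleph_0$ and $C$ may be taken finite. Theorem \ref{thm:simp} then upgrades the conclusion to local character of $\ind^d$ with $\kappa(A) = \aleph_0$ on finite real tuples.

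The main point requiring care is not geometric but organizational: the standard definition of supersimplicity is phrased in $\M^{\eq}$, whereas the argument above supplies local character for nondividing only on real tuples. This gap is bridged by invoking weak elimination of imaginaries (Theorem \ref{thm:WEI}), which transfers the finite-base local character from the real sorts to $T^{\eq}$, in direct analogy with how real rosiness was promoted to rosiness via Fact \ref{fact:rrosy}. Once this transfer is recorded, the corollary follows immediately.
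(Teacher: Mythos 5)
Your proposal is correct and follows essentially the same route as the paper: modularity comes from base monotonicity of $\ind^d$ (which in fact holds in any theory, so the detour through nonforking is harmless but unnecessary) transferred to $\ind^a$ via Theorem \ref{thm:simp}, and supersimplicity comes from Proposition \ref{prop:alg}$(b)$ together with condition $(iii)$ of that theorem. Your final worry about $\M^{\eq}$ is not an issue the paper needs to address, since supersimplicity (unlike superrosiness) is standardly defined by local character on finite \emph{real} tuples, so the appeal to Theorem \ref{thm:WEI} is an optional extra rather than a required bridge.
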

\begin{proof}
Recall that $\ind^d$ satisfies base monotonicity in any theory, and so $T$ is modular by Theorem \ref{thm:simp}. If $T$ has locally finite algebraic closure then, combining Proposition \ref{prop:alg}$(b)$ with condition $(iii)$ of Theorem \ref{thm:simp}, we obtain supersimplicity.
\end{proof}

We can use the results of Section \ref{sec:WEI} to refine these conclusions. Recall that the ternary relation of \emph{nonforking independence} $\ind^f$ is defined by ``forcing extension" on $\ind^d$; precisely, $a\ind^f_C b$ if and only if, for all $\hat{b}\supseteq b$, there is $a'\equiv_{bC} a$ such that $a'\ind^d_C \hat{b}$. Recall also that, for simple theories, $\ind^d$ and $\ind^f$ coincide (see e.g. \cite{KimSST}, \cite{Kibook}). In generalizing important concepts concerning forking in stable theories, Hart, Kim, and Pillay \cite{HKP} introduced hyperimaginaries to define canonical bases and the notion of a $1$-based simple theory.

\begin{definition}
A simple theory $T$ is \textbf{$1$-based} if, for all $A,B\subset\M^{\eq}$, we have $A\ind^f_{\bdd(A)\cap\bdd(B)} B$ in $\M^{\heq}$.
\end{definition}

\begin{fact}\label{thm:1Beq}
If $T$ is simple, with elimination of hyperimaginaries, then the following are equivalent.
\begin{enumerate}[$(i)$]
\item $T$ is $1$-based.
\item $\ind^a$ and $\ind^f$ coincide in $\M^{\eq}$.
\item $T^{\eq}$ is modular.
\end{enumerate}
\end{fact}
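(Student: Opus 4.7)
The plan is to run the cycle $(i) \Rightarrow (ii) \Rightarrow (iii) \Rightarrow (i)$ entirely in $\M^{\eq}$, exploiting elimination of hyperimaginaries to replace bounded closures $\bdd(\cdot)$ by algebraic closures $\eacl(\cdot)$ up to interdefinability of a hyperimaginary with a real sequence. In every theory one has $\ind^f \Rightarrow \ind^a$, so the nontrivial half of $(ii)$ is always the reverse containment, and $(i) \Leftrightarrow (ii)$ turns on the behavior of canonical bases.

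For $(i) \Rightarrow (ii)$, I would fix $A \ind^a_C B$ in $\M^{\eq}$ and consider a canonical base $e$ for the Lascar strong type of $A$ over $BC$. Simplicity places $e \in \bdd(BC)$, and the hypothesis of 1-basedness applied to $A$ and $BC$ gives $e \in \bdd(A)$; elimination of hyperimaginaries then relocates $e$ to a real tuple in $\eacl(A) \cap \eacl(BC)$. The assumption $\eacl(AC) \cap \eacl(BC) = \eacl(C)$ forces $e \in \eacl(C)$, so $A \ind^f_C B$. The implication $(ii) \Rightarrow (iii)$ is immediate: $\ind^f$ is base monotone in every simple theory, so if $\ind^a = \ind^f$ in $\M^{\eq}$ then $\ind^a$ inherits base monotonicity in $\M^{\eq}$, which is precisely modularity of $T^{\eq}$.

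For $(iii) \Rightarrow (i)$, my plan is to first lift Proposition \ref{prop:alg}(b)--(c) to $\M^{\eq}$, so that modularity delivers local character of $\ind^a$ in $\M^{\eq}$ together with $\ind^a = \ind^\thrn$ there. Combined with the formal properties (invariance, monotonicity, symmetry, transitivity, base monotonicity from modularity) and extension, $\ind^a$ becomes a strict independence relation on $\M^{\eq}$ in Adler's sense, and Adler's uniqueness theorem for such relations in simple theories forces $\ind^a = \ind^f$ in $\M^{\eq}$. Then 1-basedness falls out: for arbitrary $A, B \subset \M^{\eq}$, setting $C = \eacl(A) \cap \eacl(B)$ gives $A \ind^a_C B$, hence $A \ind^f_C B$, which matches the definitional formulation over $\bdd(A) \cap \bdd(B)$ via elimination of hyperimaginaries. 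The main obstacle I anticipate is verifying extension for $\ind^a$ in $\M^{\eq}$ under modularity alone --- the other axioms of a strict independence relation are either formal or already established --- and ensuring that Adler's uniqueness theorem really applies at the level of $\M^{\eq}$ rather than only on the home sort.
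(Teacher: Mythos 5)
Your cycle $(i)\Rightarrow(ii)\Rightarrow(iii)\Rightarrow(i)$ matches the paper's decomposition in spirit (the paper does $(i)\Leftrightarrow(ii)$, $(ii)\Rightarrow(iii)$, $(iii)\Rightarrow(ii)$), and the first two legs are fine. For $(i)\Rightarrow(ii)$ you route through canonical bases, which is heavier than necessary --- the paper's version is just: reduce to $A,B,C$ algebraically closed with $\eacl(AC)\cap\eacl(BC)=\eacl(C)$, note that by elimination of hyperimaginaries $\bdd(A)\cap\bdd(BC)$ is interdefinable with a subset of $\eacl(C)$, and apply the definition of $1$-basedness together with base monotonicity of $\ind^f$ --- but your argument is correct modulo the standard (Hart--Kim--Pillay) equivalence between the definitional form of $1$-basedness and the canonical-base form. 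The step $(ii)\Rightarrow(iii)$ is identical to the paper's.

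The genuine gap is in $(iii)\Rightarrow(i)$. There is no ``uniqueness theorem for strict independence relations in simple theories'' of the kind you invoke. Adler's theorem in \cite{Adgeo} says only that $\ind^\thrn$ is the \emph{weakest} strict independence relation; strict independence relations are not unique in general, and the uniqueness statement that does exist for simple theories is the Kim--Pillay characterization, which requires verifying the \emph{independence theorem over models} for $\ind^a$ --- an amalgamation axiom that is not among the axioms you list and is not a formal consequence of them. (Verifying it for $\ind^a$ is essentially the substantive content you would be trying to avoid.) Note also that your strict-independence-relation detour, even if completed, only yields $\ind^a=\ind^\thrn$ in $\M^{\eq}$, which Proposition \ref{prop:alg}$(c)$ applied to $T^{\eq}$ already gives directly from modularity (so your worry about extension is moot). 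What actually closes the gap from $\ind^\thrn$ to $\ind^f$ is the theorem of Ealy and Onshuus \cite[Theorem 2.8]{EaOn} that in a simple theory with elimination of hyperimaginaries, $\ind^f$ and $\ind^\thrn$ coincide in $\M^{\eq}$; this is precisely the external input the paper cites, and your proposal has no substitute for it.
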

\begin{proof}
This is essentially identical to Exercise 3.29 of Adler's thesis \cite{Adthesis}, and we sketch the proof. First, the equivalence of $(i)$ and $(ii)$ follows from elimination of hyperimaginaries and Fact \ref{divacl}. Since $\ind^f$ satisfies base monotonicity in any theory, $(ii)\Rightarrow(iii)$ is trivial. Finally, if $(iii)$ holds then, by Proposition \ref{prop:alg}$(c)$, $\ind^\thrn$ coincides with $\ind^a$ in $\M^{\eq}$, and so $(ii)$ follows from the fact that if $T$ is simple with elimination of hyperimaginaries then $\ind^f$ and $\ind^\thrn$ coincide in $\M^{\eq}$ (see \cite[Theorem 2.8]{EaOn}).
\end{proof}

Altogether, for simple theories with elimination of hyperimaginaries, $1$-basedness expresses that forking in $T^{\eq}$ is as trivial as possible. Unsurprisingly, this has strong consequences for the theory. For example, Kim \cite{KimSST} shows that any simple $1$-based theory, with elimination of hyperimaginaries, satisfies the stable forking conjecture.

We will use Fact \ref{thm:1Beq} to conclude that simple free amalgamation theories are $1$-based. First, we show that under the additional assumption of geometric elimination of imaginaries, conditions $(ii)$ and $(iii)$ of Fact \ref{thm:1Beq} may be checked in $\M$ rather than $\M^{\eq}$. The proof of this only requires the following lemma, which is similar to the techniques in \cite{EaGo}. We could not find a reference for this exact result, and so we outline the proof.

\begin{lemma}\label{GEI}
Suppose $T$ is a complete theory with geometric elimination of imaginaries. Given $e\in\M^{\eq}$, let $g(e)$ be a geometric canonical parameter for $e$ (for $a\in\M$, assume $g(a)=a$). Given $A\subset\M^{\eq}$, let $g(A)=\bigcup\{g(e):e\in A\}$.
\begin{enumerate}[$(a)$]
\item If $A,B,C\subset\M^{\eq}$ then $A\ind^a_C B$ in $\M^{\eq}$ if and only if $g(A)\ind^a_{g(C)}g(B)$ in $\M$.
\item $T$ is modular if and only if $T^{\eq}$ is modular.
\end{enumerate} 
\end{lemma}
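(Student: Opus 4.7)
The plan is to prove (a) by direct calculation from two ingredients: for any $X \subseteq \M^{\eq}$ one has $\eacl(X) = \eacl(g(X))$ (since $g(e) \in \eacl(e)$ and $e \in \eacl(g(e))$ for every $e \in X$), and the standard identity $\eacl(Y) \cap \M = \acl(Y)$ for real $Y$. Combining these gives
\[
\acl(g(A)g(C)) \cap \acl(g(B)g(C)) = \eacl(AC) \cap \eacl(BC) \cap \M,
\]
while $\acl(g(C)) = \eacl(C) \cap \M$. The forward direction of (a) is then immediate by intersecting both sides of $\eacl(AC)\cap\eacl(BC)=\eacl(C)$ with $\M$. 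For the converse, given $e \in \eacl(AC) \cap \eacl(BC)$, the real tuple $g(e)$ lies in $\acl(g(A)g(C)) \cap \acl(g(B)g(C)) = \acl(g(C)) \subseteq \eacl(C)$, hence $e \in \eacl(g(e)) \subseteq \eacl(C)$.

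For part (b), the backward implication is essentially immediate: since $g$ is the identity on $\M$, part (a) identifies $\ind^a$ on real tuples with its interpretation in $\M^{\eq}$, so base monotonicity in $\M^{\eq}$ restricts to base monotonicity in $\M$. For the forward direction, suppose $T$ is modular and $A \ind^a_C B$ in $\M^{\eq}$ with $C \subseteq D \subseteq B$. Part (a) gives $g(A) \ind^a_{g(C)} g(B)$ in $\M$, and from the chain $C \subseteq D \subseteq B$ in $\M^{\eq}$ one obtains $\acl(g(C)) \subseteq \acl(g(D)) \subseteq \acl(g(B))$ by applying $\eacl$ and intersecting with $\M$. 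Since $\ind^a$ is invariant under replacing its arguments by their algebraic closures, base monotonicity of $\ind^a$ in $\M$ applies to the triple $(\acl(g(A)), \acl(g(B)), \acl(g(D)))$ over $\acl(g(C))$, yielding $g(A) \ind^a_{g(D)} g(B)$ in $\M$; part (a) then translates this back to $A \ind^a_D B$ in $\M^{\eq}$.

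The only point requiring care is that $g(D)$ need not literally be a subset of $g(B)$, so one cannot apply modularity in $\M$ directly to $(g(A), g(B), g(D), g(C))$ as subsets of $\M$. The fix is to pass through algebraic closures, where the inclusions are restored. No other subtleties arise: both parts reduce to manipulation of the equality $\eacl(X) = \eacl(g(X))$ against the standard interaction between $\eacl$ and $\acl$.
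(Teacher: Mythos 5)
Your proof is correct and follows essentially the same route as the paper's: both parts reduce to the identity $\eacl(X)=\eacl(g(X))$ together with the fact that $\ind^a$ computed in $\M$ and in $\M^{\eq}$ agree on real sets (via $\eacl(Y)\cap\M=\acl(Y)$), with your converse in $(a)$ being exactly the paper's argument of passing from $e$ to the real tuple $g(e)$ and back. The one caveat you raise in part $(b)$ is not actually an issue: since $g(D)=\bigcup\{g(e):e\in D\}$ for a fixed choice function $g$, the inclusion $D\subseteq B$ does give $g(D)\subseteq g(B)$ literally (as the paper notes), so your detour through algebraic closures is harmless but unnecessary.
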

\begin{proof}
Part $(a)$. First, note that $\eacl(A)=\eacl(g(A))$ for any $A\subset\M^{\eq}$. Note also that, for any $A,B\subset\M^{\eq}$, we have $g(AB)=g(A)g(B)$ and, if $A\seq B$, then $g(A)\seq g(B)$. Using these observations, we see that $A\ind^a_C B$ in $\M^{\eq}$ if and only if $g(A)\ind^a_{g(C)} g(B)$ in $\M^{\eq}$. So it remains to show $g(A)\ind^a_{g(C)}g(B)$ in $\M^{\eq}$ if and only if $g(A)\ind^a_{g(C)}g(B)$ in $\M$. The forward direction is trivial, so suppose $g(A)\ind^a_{g(C)}g(B)$ in $\M$, and $e\in \eacl(g(AC))\cap\eacl(g(BC))$. Since $g(e)\in\eacl(e)$ and $g(e)$, $g(A)$, $g(B)$, and $g(C)$ are all subsets of $\M$, it follows that $g(e)\seq\acl(g(AC))\cap\acl(g(BC))=\acl(g(C))$. Since $e\in\eacl(g(e))$, we have $e\in \eacl(g(C))$, as desired.

Part $(b)$. Use part $(a)$ to transfer base monotonicity for $\ind^a$ between $\M$ and $\M^{\eq}$ (this uses that $g(A)=A$ for all $A\subset\M$). 
\end{proof}

\begin{theorem}\label{thm:1B}
If $T$ is simple, with elimination of hyperimaginaries and geometric elimination of imaginaries, then the following are equivalent.
\begin{enumerate}[$(i)$]
\item $T$ is $1$-based.
\item $\ind^a$ and $\ind^f$ coincide in $\M$.
\item $T$ is modular.
\end{enumerate}
\end{theorem}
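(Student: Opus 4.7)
The plan is to transfer the analogous statement in $\M^{\eq}$ (Fact \ref{thm:1Beq}) down to $\M$ using Lemma \ref{GEI}, arguing in the cycle $(iii) \Rightarrow (i) \Rightarrow (ii) \Rightarrow (iii)$.

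For $(iii) \Rightarrow (i)$, assume $T$ is modular. By Lemma \ref{GEI}$(b)$, which requires geometric elimination of imaginaries, $T^{\eq}$ is modular as well. Then Fact \ref{thm:1Beq}, invoked with simplicity and elimination of hyperimaginaries, immediately yields $1$-basedness.

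For $(i) \Rightarrow (ii)$, Fact \ref{thm:1Beq} gives that $\ind^a$ and $\ind^f$ coincide in $\M^{\eq}$. Restricting to real tuples preserves this equality: for $A \subset \M$ we have $\eacl(A) \cap \M = \acl(A)$, so $\ind^a$ on real parameters agrees whether computed in $\M^{\eq}$ or in $\M$; and in a simple theory, nonforking of real tuples over real parameters is the same whether measured in $T$ or in $T^{\eq}$. Finally, for $(ii) \Rightarrow (iii)$, $\ind^f$ satisfies base monotonicity in any theory (inherited from $\ind^d$), so $\ind^a = \ind^f$ in $\M$ forces $\ind^a$ to satisfy base monotonicity in $\M$, which is exactly the definition of modularity of $T$.

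The argument is essentially bookkeeping on top of machinery already developed in the paper, so I do not anticipate any serious obstacle. The subtlest point is the restriction step in $(i) \Rightarrow (ii)$, where one must confirm that both $\ind^a$ and $\ind^f$ behave well under passage from $\M^{\eq}$ to $\M$; both ingredients, however, are standard facts about algebraic closure in $\M^{\eq}$ and about forking in simple theories.
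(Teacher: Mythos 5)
Your proposal is correct and follows essentially the same route as the paper: combine Fact \ref{thm:1Beq} with Lemma \ref{GEI} to transfer the equivalence from $\M^{\eq}$ down to $\M$, with $(ii)\Rightarrow(iii)$ coming from base monotonicity of $\ind^f$. The only point worth tightening is in $(i)\Rightarrow(ii)$: the nontrivial direction of the transfer of $\ind^a$ (from $\M$ up to $\M^{\eq}$) is not just the observation that $\eacl(A)\cap\M=\acl(A)$, but genuinely uses geometric elimination of imaginaries via Lemma \ref{GEI}$(a)$, which is exactly how the paper phrases that step.
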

\begin{proof}
The equivalence of $(i)$ and $(iii)$ is immediate from Fact \ref{thm:1Beq} and Lemma \ref{GEI}$(b)$. Since $\ind^f$ satisfies base monotonicity (in $\M$), $(ii)$ implies $(iii)$ is trivial. For $(i)$ to $(ii)$, assume $T$ is $1$-based. Fix $A,B,C\subset\M$ such that $A\ind^a_C B$ in $\M$. Then $A\ind^a_C B$ in $\M^{\eq}$ by Lemma \ref{GEI}$(a)$, and so $A\ind^f_C B$ in $\M^{\eq}$ by Fact \ref{thm:1Beq}. Therefore $A\ind^f_C B$ in $\M$.  
\end{proof}

\begin{corollary}\label{cor:1B}
Any simple free amalgamation theory is $1$-based.
\end{corollary}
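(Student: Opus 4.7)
The plan is to assemble the corollary directly from three ingredients already developed in the paper: elimination of hyperimaginaries and weak elimination of imaginaries from Theorem \ref{thm:WEI}, the identification $\ind^d=\ind^a$ in $\M$ from Theorem \ref{thm:simp}, and the characterization of $1$-basedness in Theorem \ref{thm:1B}. So this is not really a new argument but an application of the preceding structural results.

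First, I would note that if $T$ is a simple free amalgamation theory, then by Theorem \ref{thm:WEI} it has elimination of hyperimaginaries and weak elimination of imaginaries. Weak canonical parameters are in particular geometric canonical parameters (since $\edcl(c)\seq\eacl(c)$), so $T$ has geometric elimination of imaginaries. This puts $T$ in the hypothesis of Theorem \ref{thm:1B}.

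Next, I would invoke Theorem \ref{thm:simp}: for simple free amalgamation theories, $\ind^d$ and $\ind^a$ agree in $\M$. Since $\ind^d$ always satisfies base monotonicity, this transfers base monotonicity to $\ind^a$ in $\M$, which by Definition \ref{def:mod} is exactly modularity of $T$. Thus all hypotheses of Theorem \ref{thm:1B} are met, and the equivalent condition (iii) (modularity) holds, so $T$ is $1$-based.

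The only mild care needed is to make sure the implication from weak elimination of imaginaries to geometric elimination of imaginaries is flagged, so the reader sees why Theorem \ref{thm:1B} applies; there is no obstacle, since by definition a weak canonical parameter $c$ for $e$ satisfies $c\in\eacl(e)$ and $e\in\edcl(c)\seq\eacl(c)$, which is precisely the definition of a geometric canonical parameter. No further model-theoretic work (and in particular no independent computation with canonical bases or hyperimaginaries) is needed for the corollary itself.
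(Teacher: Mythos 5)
Your proposal is correct and follows exactly the route the paper intends for this corollary: Theorem \ref{thm:WEI} supplies elimination of hyperimaginaries and weak (hence geometric) elimination of imaginaries, the corollary to Theorem \ref{thm:simp} gives modularity via base monotonicity of $\ind^d$, and Theorem \ref{thm:1B} converts modularity into $1$-basedness. Nothing is missing.
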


It is worth restating this result explicitly for the structures in Example \ref{ex}.\ref{ex1}.

\begin{corollary}\label{cor:FA1B}
If $\cM$ is a countable, simple, ultrahomogeneous structure in a finite relational language $\cL$, whose age is closed under free amalgamation of $\cL$-structures, then $\Th(\cM)$ is $1$-based.
\end{corollary}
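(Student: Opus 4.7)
The plan is to observe that Corollary \ref{cor:FA1B} is essentially an unpacking of Corollary \ref{cor:1B} for the concrete class of structures in Example \ref{ex}.\ref{ex1}, so the work has already been done. The only thing to verify is that the hypotheses on $\cM$ put $\Th(\cM)$ inside the scope of Corollary \ref{cor:1B}.

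First I would invoke Proposition \ref{prop:ex}, which says that for any $\cM$ arising as the \Fraisse\ limit of a \Fraisse\ class in a finite relational language closed under free amalgamation of $\cL$-structures, the ternary relation $\ind^{fa}$ is a free amalgamation relation for $\Th(\cM)$; in particular, $\Th(\cM)$ is a free amalgamation theory in the sense of Definition \ref{def:FAT}. Since we are assuming in addition that $\Th(\cM)$ is simple, Corollary \ref{cor:1B} immediately gives that $\Th(\cM)$ is $1$-based.

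The only step that could conceivably require comment is the verification that the hypothesis ``ultrahomogeneous in a finite relational language, with age closed under free amalgamation of $\cL$-structures'' matches the framework of Example \ref{ex}.\ref{ex1}: an ultrahomogeneous $\cL$-structure is the \Fraisse\ limit of its age, and the closure assumption on the age is exactly what is required to apply Proposition \ref{prop:ex}. There is no genuine obstacle here — the corollary is purely a restatement for the reader's convenience of what Corollary \ref{cor:1B} already gives, phrased in the combinatorial language that matches the motivating theorem of the introduction.
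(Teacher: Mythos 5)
Your proposal is correct and matches the paper's own treatment: the corollary is stated immediately after Corollary \ref{cor:1B} precisely as a restatement for the structures of Example \ref{ex}.\ref{ex1}, with Proposition \ref{prop:ex} supplying the fact that $\Th(\cM)$ is a free amalgamation theory. Your added remark that an ultrahomogeneous structure is the \Fraisse\ limit of its age, so the hypotheses do place $\cM$ in that framework, is exactly the (routine) verification the paper leaves implicit.
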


In particular, this gives an alternate proof of a recent result of Koponen \cite{Kop1B} showing that the generic tetrahedron-free $3$-hypergraph is $1$-based. We also give this as a partial response to the observation, made in \cite{Kop1B}, that all known examples of countable, simple, ultrahomogeneous structures, in finite relational languages, have $1$-based theories.\footnote{Corollary \ref{cor:FA1B} has also been independently obtained in recent work of Palac\'{i}n \cite{PalGA}.}

Returning to the initial motivations for this section, we have shown that simplicity and $\NTP_2$ coincide for free amalgamation theories. We previously observed that all documented non-simple examples have $\SOP_3$, and so a reasonable conjecture is that simplicity and $\NSOP_3$ also coincide for free amalgamation theories. We will prove this for the class of \emph{modular} free amalgamation theories. Recall that all of our concrete examples of free amalgamation theories are modular, and so it seems quite possible that the modularity assumption is redundant.

\begin{lemma}\label{lem:inddiv}
Suppose $\ind$ satisfies invariance, monotonicity, full existence and stationarity. Then, for any closed tuples $a,b$ and closed sets $C$, with $C\seq a\cap b$, if $a\ind_C b$ then $a\ind^d_C b$ (and hence $a\ind^a_C b$).
\end{lemma}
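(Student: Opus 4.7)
The plan is to exhibit, for any $C$-indiscernible sequence $(b_i)_{i<\omega}$ with $b_0=b$, a single tuple $a^*$ such that $a^*b_i\equiv_C ab$ for all $i<\omega$; this directly witnesses $a\ind^d_C b$. The key observation is that no induction on $n$ (as in Lemma~\ref{amalgind}) is needed: one application of full existence produces a candidate that is independent from the whole sequence at once, and stationarity then extracts each individual type equality.

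First, since $C$ is closed, I would apply full existence with the (small) set $B=\{b_i:i<\omega\}$ to find $a^*\equiv_C a$ with $a^*\ind_C B$; monotonicity then yields $a^*\ind_C b_i$ for every $i$. Choosing $\tau\in\Aut(\M/C)$ with $\tau(a)=a^*$ and setting $b^*=\tau(b)$, invariance gives $a^*\ind_C b^*$ and $a^*b^*\equiv_C ab$, with $b^*\equiv_C b$. Since $a$ and $b$ are closed and $\acl$ is automorphism-invariant, $a^*$ and $b^*$ are closed; since $C\seq a$ and $C\seq b$ and $\tau$ fixes $C$ pointwise, we also have $C\seq a^*\cap b^*$.

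Now fix $i<\omega$. By $C$-indiscernibility, $b_i\equiv_C b\equiv_C b^*$, so I would pick $\rho_i\in\Aut(\M/C)$ with $\rho_i(b^*)=b_i$ and set $a'_i=\rho_i(a^*)$. Invariance gives $a'_i\ind_C b_i$ and $a'_i b_i\equiv_C a^*b^*\equiv_C ab$; also $a'_i\equiv_C a^*$, with $a'_i$ closed and $C\seq a'_i$. Then all hypotheses of stationarity are in place for the closed tuples $a^*,a'_i,b_i$ over the closed base $C$ (namely $C\seq a^*\cap b_i$, $a^*\ind_C b_i$, $a'_i\ind_C b_i$, and $a'_i\equiv_C a^*$), so stationarity delivers $a^*b_i\equiv_C a'_i b_i\equiv_C ab$. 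As this works for every $i$, $a^*$ witnesses nondividing, and the parenthetical $a\ind^a_C b$ is then the standard implication $\ind^d\Rightarrow\ind^a$ (visible from Fact~\ref{divacl} together with our closedness assumptions).

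The only subtle step will be verifying the stationarity hypotheses for each $i$—in particular the closedness of the automorphic conjugates $a^*,a'_i$ and the fact that both are $\ind_C$-independent from $b_i$, the latter being exactly what full existence was chosen to guarantee at the outset. Note that, in contrast to Lemma~\ref{amalgind}, neither freedom nor the equality $C=a\cap b$ is needed: we never need to reduce the base from a larger set back to $C$, since stationarity, applied to a pair of $C$-conjugates of $a^*$, does all the work.
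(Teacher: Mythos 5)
Your proposal is correct and follows essentially the same route as the paper: one application of full existence to get $a^*\equiv_C a$ with $a^*\ind_C b_{<\omega}$, then monotonicity, invariance (to see that each $C$-conjugate $a'_i$ with $a'_ib_i\equiv_C ab$ satisfies $a'_i\ind_C b_i$), and stationarity to conclude $a^*b_i\equiv_C ab$ for every $i$. The extra bookkeeping with the explicit automorphisms $\tau$ and $\rho_i$ is just a more detailed rendering of the paper's one-line verification of the stationarity hypotheses.
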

\begin{proof}
Let $(b_i)_{i<\omega}$ be a $C$-indiscernible sequence, with $b_0=b$. By full existence, there is $a'\equiv_C a$ such that $a'\ind_C b_{<\omega}$. Given $i<\omega$, let $a_i$ be such that $a_ib_i\equiv_C ab$. For all $i<\omega$, we have $a_i\ind_C b_i$ and $a'\ind_C b_i$ by invariance and monotonicity. Since $a'\equiv_C a_i$, we apply stationarity to obtain $a'b_i\equiv_C a_ib_i\equiv_C ab$, as desired.
\end{proof}

\begin{remark}
The reader may have noticed that, so far, none of our results has required transitivity. In fact, we will not use transitivity in any part of this paper. It is included in Definition \ref{def:FAT} in anticipation of its usefulness in future work. For example, one may show that if $\ind$ satisfies invariance, transitivity, and full existence, then $\ind$ satisfies \emph{extension}: for all $a,B,\hat{B},C$, with $B,C$ closed, $B\seq\hat{B}$, and $C\seq a\cap B$, if $a\ind_C B$ then there is $a'\equiv_B a$ such that $a'\ind_C \hat{B}$. Using this, one may prove the version of Lemma \ref{lem:inddiv} obtained by adding transitivity to the assumptions and demanding $a\ind^f_C b$ in the conclusion (see \cite[Theorem 4.1]{CoTe}).
\end{remark}

\begin{theorem}\label{thm:NSOP3}
Suppose $T$ is a modular free amalgamation theory. Then $T$ is simple if and only if $T$ is $\NSOP_3$.
\end{theorem}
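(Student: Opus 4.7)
One direction is standard: simplicity of $T$ implies $\NSOP_3$, as $\SOP_3$ entails non-simplicity (see \cite{Sh500}).

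For the converse, suppose $T$ is a modular free amalgamation theory that is not simple. The plan is to construct the data of Proposition \ref{SOP3}. By Theorem \ref{thm:simp}, fix closed tuples $a,b$ and a closed set $C = a\cap b$ with $a\nind^d_C b$; by Lemma \ref{lem:inddiv}, also $a \nind_C b$ for the fixed free amalgamation relation $\ind$. Set $p(x,y) = \tp(ab/C)$ and, by stationarity, let $q(x,y) = \tp(a'b/C)$ for any $a' \equiv_C a$ with $a' \ind_C b$; then $p \neq q$.

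Build an $\ind$-independent $C$-indiscernible sequence $(b_i)_{i<\omega}$ with $b_0 = b$, and use Lemma \ref{amalgind} to obtain $a^*$ with $p(a^*, b_i)$ for every $i$. For each $n$, full existence over the closed set $b_{>n}C$ produces $a_n\equiv_{b_{>n}C}a^*$ with $a_n\ind_{b_{>n}C}b_{\leq n}$; since distinct terms of an $\ind$-independent sequence meet only in $C$ and $a^*\cap b_i = C$, one deduces $a_n\cap b_{\leq n} = C$, so freedom gives $a_n\ind_C b_{\leq n}$ and stationarity forces $q(a_n, b_j)$ for $j\leq n$, while $p(a_n, b_j)$ for $j>n$ is inherited from $a^*$. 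This realizes clause (i) of Proposition \ref{SOP3}.

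The main obstacle is clause (ii): inconsistency of $p(x, b_i)\cup q(x, b_j)$ for $i<j$. A naive realization yields only $x \nind_C b_i$ and $x \ind_C b_j$, which is not absurd in isolation, so $p$ and $q$ must be strengthened. I anticipate fixing also a $C$-indiscernible sequence $(c_i)_{i<\omega}$ witnessing $a\nind^d_C b$, extracting the minimal integer $k$ such that $\{p(x,c_{i_1}),\ldots,p(x,c_{i_k})\}$ is inconsistent, and replacing $p,q$ by $(k-1)$-ary enrichments $P(x,\bar y), Q(x,\bar y)$ obtained by packaging $k-1$ consecutive $b$-terms into each $\bar y$-slot. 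A joint realization of $P(x,B_i)\cup Q(x,B_j)$ should then, via stationarity and the $\ind$-independence of $(b_i)_i$---with modularity tracking algebraic closures across the two sequences---be transferable to a joint realization of $p$ on $k$ members of the dividing witness, contradicting the minimality of $k$. This cross-sequence transfer is the delicate heart of the argument and the step where modularity is expected to be essential.
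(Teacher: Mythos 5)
Your easy direction, your use of Proposition \ref{SOP3} as the target, and your instinct to package $k-1$ consecutive terms of a dividing witness all match the paper. The array you build for clause $(i)$ (getting $p(a_n,b_j)$ for $j>n$ and $q(a_n,b_j)$ for $j\le n$) is also essentially sound. But the proposal has a genuine gap exactly where you flag it, and the anticipated fix cannot work. You take $p=\tp(ab/C)$ (the dividing type) and $q=\tp(a'b/C)$ with $a'\ind_C b$ (the free type), and hope that after a $(k-1)$-fold packaging, a joint realization of $P(x,B_i)\cup Q(x,B_j)$ transfers to a joint realization of $p$ on $k$ terms of the dividing witness. It will not: $Q(x,B_j)$ imposes the \emph{independent} configuration over the block $B_j$, and in a free amalgamation theory a dividing-type condition on one block and a free-type condition on a second block meeting it only in $C$ are precisely the kind of conditions that full existence, stationarity, and Lemma \ref{amalgind} let you satisfy \emph{simultaneously}. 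So $P(x,B_i)\cup Q(x,B_j)$ is in general consistent, and no contradiction with the minimality of $k$ is available. The inconsistency required in clause $(ii)$ of Proposition \ref{SOP3} has to come from two instances of the \emph{dividing} type landing on blocks that jointly contain $\ge k$ terms of the original sequence.

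The paper's proof supplies the missing idea. After reducing to $k=2$ (your packaging step, which is the paper's Claim 1) and using modularity to arrange $b_0\cap b_1=C$ (Claim 2 --- this, not ``tracking closures across two sequences,'' is where modularity enters), it sets $r(x,y)=\tp(a,b/C)$, so that $r(x,b_0)\cup r(x,b_1)$ is inconsistent, and builds a doubly indexed $\ind$-independent array $(b^n_1,b^n_2)_{n<\omega}$ with the property that $b^i_1b^j_2\equiv_C b_0b_1$ (the inconsistent pair) exactly when $i<j$, and $b^i_1b^j_2\equiv_C b_0b^*_0$ (an independent pair, over which $r$ can be jointly realized by Lemma \ref{amalgind}) otherwise. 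Taking $d_i=(b^i_1,b^i_2)$, $p(x,z)=r(x,y_1)$ and $q(x,z)=r(x,y_2)$ --- so both types in Proposition \ref{SOP3} are the \emph{same} dividing type read off different coordinates --- clause $(ii)$ is immediate from the $i<j$ case of the array, and clause $(i)$ follows by applying Lemma \ref{amalgind} to the diagonal sequences $c^n_i$. Your construction would need to be rebuilt along these lines; as written, the ``cross-sequence transfer'' you describe has no mechanism to produce a contradiction.
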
 
\begin{proof}
First, recall that any simple theory is $\NSOP_3$ (see, e.g., \cite[Claim 2.7]{Sh500}). Conversely, if $T$ is not simple then, in particular, $\ind^d$ does not coincide with $\ind^a$ (this is true for any theory since $\ind^a$ is symmetric). Using Fact \ref{divacl}, we may fix a closed set $C\subset\M$ and closed tuples $a,b$ such that $a\cap b=C$ and $a\nind^d_C b$. Let $(b_i)_{i<\omega}$ be a $C$-indiscernible sequence such that $b_0=b$ and, for some $k<\omega$, there is no tuple $a'$ such that $a'b_i\equiv_C ab$ for all $i<k$. 

\noindent\textit{Claim 1}: We may assume $k=2$.

\noindent\textit{Proof}: First, assume $k>1$ is minimal such that there is no tuple $a'$ with $a'b_i\equiv_C ab$ for all $i<k$. Let $a^*$ be such that $a^*b_i\equiv_C ab$ for all $i<k-1$. For $i<\omega$, let $b^*_i=\acl(b_{i(k-1)}b_{i(k-1)+1}\ldots b_{i(k-1)+k-2})$. Let $b^*=b^*_0$ and $C^*=a^*\cap b^*$, and note that $C\seq C^*$. Suppose, toward a contradiction, that for some $i<j$, there is a tuple $a'$ with $a'b^*_i\equiv_{C^*}a^*b^*\equiv_{C^*}a'b^*_j$. Then, for all $s\in\{i,j\}$ and $t<k-1$, we have $a'b_{s(k-1)+t}\equiv_C a^*b_t\equiv_C ab$. Since $|\{s(k-1)+t:s\in\{i,j\},~t<k-1\}|\geq k$ (recall $i<j$), it follows by indiscernibility that there is a tuple $a''$ such that $a''b_t\equiv_C ab$ for all $t<k$, which contradicts the choice of $k$. 

Now replace $(b^*_i)_{i<\omega}$ with a $C^*$-indiscernible realization of its EM-type over $C^*$, while still assuming $b^*_0=b^*$. \textcolor{blue}{(This need not be possible;  see Section \ref{sec:717}.)} Then $a^*\cap b^*=C^*$, and there is no $a'$ such that $a'b^*_i\equiv_{C^*}a^*b^*$ for all $i<2$.\claim

\noindent\textit{Claim 2}: We may assume $b_0\cap b_1=C$.

\noindent\textit{Proof}: Let $C^*=b_0\cap b_1$ and $a^*=\acl(aC^*)$. Note that $C\seq C^*$ and $(b_i)_{i<\omega}$ is $C^*$-indiscernible. Moreover, there is clearly no $a'$ such that $a'b_i\equiv_{C^*} a^*b$ for all $i<2$. Finally, since $T$ is modular, we have $a^*\cap b=\acl(aC^*)\cap b=\acl((a\cap b)C^*)=C^*$ by Proposition \ref{prop:alg}$(a)$.\claim

Fix a free amalgamation relation $\ind$. By full existence, there is $b^*_0\equiv_a b_0$ such that $b^*_0\ind_a b_0$. By freedom, and since $a\cap b_0= C$, we have $b^*_0\ind_C b_0$. Then $b_0b^*_0\equiv_C b^*_0b_0$ by Lemma \ref{switch}. By Lemma \ref{lem:inddiv}, we have $b_0\ind^a_C b^*_0$ which implies $b_0\cap b^*_0=C$ (recall $C\seq b_0\cap b^*_0$ since $C\seq a$ and $b^*_0\equiv_a b_0$). We inductively construct a sequence $(b^n_1,b^n_2)_{n<\omega}$ such that:
\begin{enumerate}[$(i)$]
\item for all $m\leq n<\omega$ and $i,j\in\{1,2\}$,
$$
b^m_ib^n_j\equiv_C
\begin{cases}
b_0b_1 & \text{if $m<n$, $i=1$, and $j=2$}\\
b_0b^*_0 & \text{otherwise.}
\end{cases}
$$
\item for all $n<\omega$, $b^n_1\ind_C b^{<n}_1b^{< n}_2$;
\item for all $n<\omega$, $b^n_2\ind_C b^{<n}_2b^n_1$.
\end{enumerate}
Let $(b^0_1,b^0_2)=(b_0,b^*_0)$. Suppose we have constructed $(b^i_1,b^i_2)_{i<n}$ as above. By full existence, we may find $b^n_1\equiv_C b_0$ such that $b^n_1\ind_C b^{<n}_1b^{<n}_2$. Then $(ii)$ is immediate. For $(i)$, we want to show $b^i_tb^n_1\equiv_C b_0b^*_0$ for all $i<n$ and $t\in\{1,2\}$. If $i<n-1$ then we have $b^n_1\ind_C b^i_t$, $b^{n-1}_1\ind_C b^i_t$, and $b^n_1\equiv_C b\equiv_C b^{n-1}_1$. By stationarity and induction, $b^i_tb^n_1\equiv_C b^i_tb^{n-1}_t\equiv_C b_0b^*_0$. Now suppose $i=n-1$, and let $s=3-t$. Then $b^n_1\ind_C b^{n-1}_t$, $b^{n-1}_s\ind_C b^{n-1}_t$ (by induction and possibly symmetry), and $b^n_1\equiv_C b\equiv_C b^{n-1}_s$. By stationarity and induction, we have $b^{n-1}_tb^n_1\equiv_C b^{n-1}_tb^{n-1}_s\equiv_C b_0b^*_0$.

Next, we must construct $b^n_2$. First, note that $(b^i_1)_{i<n}$ is $\ind$-independent over $C$ and $b_1\cap b^0_1=b_1\cap b_0= C$. By Lemma \ref{amalgind}, there is $b_*$ such that $b^i_1b_*\equiv_C b^0_1b_1=b_0b_1$ for all $i<n$. Let $B=b^{<n}_1$, which is closed by the closure axiom for $\ind$. By full existence, there is $b^n_2\equiv_B b_*$ such that $b^n_2\ind_B b^{<n}_2b^n_1$. 

\noindent\textit{Claim 3}: $b^{\leq n}_2b^n_1\cap B= C$.

\noindent\textit{Proof}: Fix $i<n$. Then $b^i_1b^n_1\equiv_C b_0b^*_0$ and $b_0\cap b^*_0= C$. It remains to show that, for all $j\leq n$, $b^i_1\cap b^j_2= C$. If $j<n$ then this follows by induction and property $(i)$. For $j=n$, we have $b^i_1b^n_2\equiv_C b^i_1b_*\equiv_C b_0b_1$, and so $b^i_1\cap b^n_2= C$.\claim

By the claim, and freedom, we have $b^n_2\ind_C b^{<n}_2b^n_1$, which gives property $(iii)$. It remains to verify the pertinent parts of property $(i)$. First, we have $b^n_2\ind_C b^n_1$, $b^{n-1}_2\ind_C b^n_1$, and $b^n_2\equiv_C b^{n-1}_2$. By stationarity and choice of $b^n_1$, we have $b^n_1b^n_2\equiv_C b^n_1b^{n-1}_2\equiv_C b_0b^*_0$. Next, if $i<n$ then we have $b^n_2\ind_C b^i_2$, $b^n_1\ind_C b^i_2$, and $b^n_2\equiv_C b^n_1$. By stationarity and choice of $b^n_1$, we have $b^i_2b^n_2\equiv_C b^i_2b^n_1\equiv_C b_0b^*_0$ (recall $b_0b^*_0\equiv_C b^*_0b_0$). Finally, for $i<n$, we have $b^i_1b^n_2\equiv_C b^i_1b_*\equiv_C b_0b_1$. 

This finishes the construction of the sequence $(b^n_1,b^n_2)_{n<\omega}$. Fix $n<\omega$. Define the sequence $(c^n_i)_{i<\omega}$ where, if $i\leq n$ then $c^n_i=b^i_2$, and, if $i>n$ then $c^n_i=b^i_1$. By $(ii)$, $(iii)$, and monotonicity, $(c^n_i)_{i<\omega}$ is $\ind$-independent over $C$. We also have $ac^n_0=ab^*_0\equiv_C ab$ and so $a\cap c^n_0= C$. By Lemma \ref{amalgind}, there is $a_n$ such that $a_nc^n_i\equiv_C ab$ for all $i<\omega$. 

Let $r(x,y)=\tp(a,b/C)$. Recall that, by assumption, $r(x,b_0)\cup r(x,b_1)$ is inconsistent. For $i<\omega$, set $d_i=(b^i_1,b^i_2)$. Fix variables $z=(y_1,y_2)$, and define the types $p(x,z)=r(x,y_1)$ and $q(x,z)=r(x,y_2)$. We use $(a_i)_{i<\omega}$, $(d_i)_{i<\omega}$, $p(x,z)$, $q(x,z)$, and Proposition \ref{SOP3} to show that $T$ has $\SOP_3$.

If $i<j$ then $a_ib^j_1=a_ic^i_j\equiv_C ab$, and so $p(a_i,d_j)$. If $i\geq j$ then $a_ib^j_2=a_ic^i_j\equiv_C ab$, and so $q(a_i,d_j)$. Finally, fix $i<j$. Then
$$
p(x,d_i)\cup q(x,d_j)=r(x,b^i_1)\cup r(x,b^j_2).
$$
By $(i)$, $b^i_1b^j_2\equiv_C b_0b_1$, and so $r(x,b^i_1)\cup r(x,b^j_2)$ is inconsistent.  
\end{proof}

\begin{remark}
By work of Evans and Wong \cite{EvWo}, simplicity coincides with $\NSOP_3$ in the full class of Hrushovski generics $\cM_f$. However, the interesting counterexamples produced by such constructions are often simple and non-modular, and therefore do not fall into our framework.\footnote{For example, if $\cL$ consists of one ternary relation then, with appropriate choice of predimension and control function $f$, $\Th(\cM_f)$ is supersimple and non-modular. See, e.g., \cite[Section 6.2]{Kibook}.}
\end{remark}

 \begin{question}\label{ques:mod}
 Is every free amalgamation theory modular? \textcolor{blue}{(The answer is negative; see Remark \ref{rem:mutchnik}.)}
 \end{question}

\subsection{Simplicity in \Fraisse\ limits with free amalgamation}
For a final application, we take a closer look at simplicity for $\Th(\cM)$, where $\cM$ is countable and ultrahomogeneous, in a finite relational language. Our motivation is the well-known fact that the (binary) generic $K_n$-free graphs are not simple (due to Shelah \cite{Sh500}), while their higher arity analogs, the generic $K^r_n$-free $r$-hypergraphs for $r>2$, are simple (due to Hrushovski \cite{Udibook}).

For the rest of the section, we fix a finite relational language $\cL$. Given $\mathcal{L}$-structures $A$ and $B$ we say $A$ is a \emph{weak substructure} of $B$ if there is an injective map from $A$ to $B$ which preserves the relations in $\mathcal{L}$.

\begin{definition}
Suppose $A$ is an $\cL$-structure. We say that singletons $a_1,\ldots,a_k\in A$ are \textbf{related in $A$} if there is a tuple $\bbar\in A$ such that each $a_i$ is a coordinate of $\bbar$ and $A\models R(\bbar)$ for some relation $R\in\cL$. Given $k\geq 2$, $A$ is \textbf{$k$-irreducible} if any $k$ distinct elements of $A$ are related in $A$.\footnote{This notion usually appears in the literature only for $k=2$ and, in this case, $2$ is omitted.}
\end{definition}

We assume that all classes of finite $\cL$-structures are closed under isomorphism.

\begin{definition}
Suppose $\cF$ is a class of finite $\cL$-structures.
\begin{enumerate}
\item An $\cL$-structure $A$ is \textbf{$\cF$-free} if no weak substructure of $A$ is in $\cF$. Let $\cK_\cF$ denote the class of finite $\cF$-free $\cL$-structures.
\item $\cF$ is \textbf{minimal} if, for any $A\in\cF$, no proper weak substructure of $A$ is in $\cF$. 
\end{enumerate}
\end{definition}

Suppose now that $\mathcal{K}$ is a class of finite $\cL$-structures such that $\cK=\cK_{\cF^*}$ for some class $\cF^*$. Let $\mathcal{F}$ be the class of finite $\cL$-structures $A$ such that $A$ is not in $\cK$, but every proper weak substructure of $A$ is in $\cK$. Then $\cF$ is minimal, and it is straightforward to show that $\cK=\cK_\cF$. We call $\mathcal{F}$ the \emph{minimal forbidden class} for $\mathcal{K}$. The reader may very that, if $\cK$ is a \Fraisse\ class, then $\mathcal{K}$ is closed under free amalgamation if and only if every structure in $\mathcal{F}$ is $2$-irreducible.

\begin{theorem}\label{thm:irred}
Suppose $\cM$ is a countable ultrahomogeneous $\cL$-structure. Let $\cK$ be the age of $\cM$, assume $\mathcal{K}=\mathcal{K}_{\mathcal{F}_*}$ for some class $\mathcal{F}_*$, and let $\mathcal{F}$ be the minimal forbidden class for $\mathcal{K}$.
\begin{enumerate}[$(a)$]
\item If every structure in $\cF$ is $3$-irreducible then $\Th(\cM)$ is simple.
\item Assume $\cK$ is closed under free amalgamation of $\cL$-structures. Then $\Th(\cM)$ is simple if and only if every structure in $\cF$ is $3$-irreducible.
\end{enumerate}
\end{theorem}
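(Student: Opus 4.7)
The hypothesis that every $F \in \cF$ is $3$-irreducible implies $\cK = \cK_\cF$ is closed under free amalgamation of $\cL$-structures: $3$-irreducibility of $F$ entails $2$-irreducibility when $|F| \geq 3$, while the smaller cases are trivial. Thus by Proposition \ref{prop:ex}, $\Th(\cM)$ is a free amalgamation theory via $\ind^{fa}$, and Theorem \ref{thm:simp} reduces simplicity to showing $\ind^a \subseteq \ind^d$ in $\M$. So fix closed tuples $a, b$ with $C := a \cap b$ and a $C$-indiscernible sequence $(b_i)_{i<\omega}$ with $b_0 = b$. I will build an abstract $\cL$-structure $\mathcal{N}$ on $a^* \cup \bigcup_i b_i$ (where $a^*$ is a fresh copy of $a$ with $a^* \cap b_i = C$) whose relations are: on each $a^* \cup b_i$, pulled back from $a \cup b \subseteq \cM$ via the identification $a^* b_i \cong_C a b$; on $\bigcup_i b_i$, inherited from $\cM$; and no others. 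The definition is consistent because both prescriptions agree on $C$ and on each $b_i$.

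To see $\mathcal{N}$ is $\cF$-free, any weak embedding of some $F \in \cF$ must fall into one of three cases. (i) The image lies entirely in some $a^* \cup b_i$, giving $F$ as a weak substructure of $\cM$, a contradiction. (ii) The image lies entirely in $\bigcup_i b_i$, same contradiction. (iii) The image meets $a^* \setminus C$, $b_i \setminus C$, and $b_j \setminus C$ for some $i \neq j$; picking one vertex of $F$ in each of these three parts and applying $3$-irreducibility yields a relation tuple in $F$ spanning all three, which is impossible in $\mathcal{N}$ by construction. Compactness and the Fra\"iss\'e extension property of $\cM$ then supply $a' \in \cM$ with $a' b_i \equiv_C a b$ for all $i$.

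\textbf{Part (b).} The forward direction follows from part (a). For the converse, suppose some $F_0 \in \cF$ fails $3$-irreducibility, witnessed by distinct $u_1, u_2, u_3 \in F_0$ not lying in any common relation tuple. Set $C_0 := F_0 \setminus \{u_1, u_2, u_3\}$ and $F_k := F_0 \setminus \{u_k\}$, each in $\cK$ by minimality. Embed $C_0$ into $\cM$; ultrahomogeneity and the embeddings of the $F_k$ let me define complete types $p_{ij}(x, y) := \tp(u_i u_j / C_0)$. I will exhibit $\SOP_3$ via Proposition \ref{SOP3}, which by Theorem \ref{thm:simp} yields non-simplicity. The plan is to produce in $\cM$ an indiscernible sequence $(a_i, b_i)_{i<\omega}$ over $C_0$ with $b_i = (v_{2,i}, v_{3,i})$, whose pairwise types over $C_0$ satisfy $\tp(a_i v_{2,j} / C_0) = p_{12}$ for $i < j$, $\tp(a_i v_{3,j} / C_0) = p_{13}$ for $i \geq j$, and $\tp(v_{2,i} v_{3,j} / C_0) = p_{23}$ for $i \leq j$, with all remaining pairwise and higher-arity relations (beyond those inherited from $C_0$) absent. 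Setting $p(x, (v, w)) := p_{12}(x, v)$ and $q(x, (v, w)) := p_{13}(x, w)$, the pattern $p(a_i, b_j)$ for $i < j$ and $q(a_i, b_j)$ for $i \geq j$ holds. For $i < j$, $p(x, b_i) \cup q(x, b_j)$ is inconsistent: any realization $a'$ would force $C_0 \cup \{a', v_{2,i}, v_{3,j}\}$ to be weakly isomorphic to $F_0$ (the three pairwise types match, and $F_0$ has no relation tuple among $u_1, u_2, u_3$, so no further relations are needed), violating $\cM \in \cK_\cF$.

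The main obstacle is realizing this sequence in $\cM$. By compactness it suffices to show that each finite slice $C_0 \cup \{(a_i, v_{2,i}, v_{3,i}) : i < n\}$, endowed with the prescribed $\cL$-structure, lies in $\cK = \cK_\cF$. Build such slices by iterated free amalgamation of copies of $F_1, F_2, F_3$ (and other small $\cK$-pieces expressing the individual pairwise types) over $C_0$, using the hypothesis that $\cK$ is closed under free amalgamation of $\cL$-structures. The key verification is that no forbidden $F' \in \cF$ weakly embeds: since only the specified pairwise relations are introduced and no higher-arity relation ever spans vertices from distinct pairs $S_i = \{a_i, v_{2,i}, v_{3,i}\}$, the distribution of a would-be $F'$ across the $S_i$'s is constrained by a case analysis parallel to part (a), with any offending configuration reducing to a sub-piece already known to be in $\cK$.
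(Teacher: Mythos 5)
Your part (a) is essentially the paper's own argument: the structure $\mathcal{N}$ and the three-way case analysis using $3$-irreducibility are exactly what the paper does. Two caveats. The opening reduction through Proposition \ref{prop:ex} and Theorem \ref{thm:simp} is unnecessary and slightly flawed: once $\ind^a\seq\ind^d$ is established, simplicity is immediate because $\ind^d\seq\ind^a$ always holds and $\ind^a$ is symmetric, with no need for $\Th(\cM)$ to be a free amalgamation theory (note that part $(a)$ does not assume $\cK$ is closed under free amalgamation); moreover, your claim that $3$-irreducibility implies $2$-irreducibility is false for forbidden structures with at most two elements, where $3$-irreducibility is vacuous. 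Also, you should first arrange the $b_i$ to be pairwise disjoint over $C$ (the paper passes elements of $b$ into $C$ to get $b^0\cap b^1=\emptyset$); otherwise your case (iii) needs witnesses chosen in $b_i\setminus b_j$ and $b_j\setminus b_i$, which you do not address.

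Part (b) takes a genuinely different route, and this is where the real gap lies. The paper's converse is short: it constructs a single $\hat b$-indiscernible sequence $(b^l_2,b^l_3)_{l<\omega}$ witnessing $b_1\nind^d_{\hat b}b_2b_3$ while $b_1\ind^a_{\hat b}b_2b_3$ (algebraic closure being trivial), and then quotes Theorem \ref{thm:simp}. You instead build an $\SOP_3$ witness directly, in effect redoing Theorem \ref{thm:NSOP3} by hand in this combinatorial setting. Your index pattern is the right one and is internally consistent (no aligned copy of $F_0$ can occur, since $j\leq k\leq i<j$ is impossible), but the entire content of such a proof is the verification that the prescribed finite configurations are $\cF$-free, and that is precisely the step you leave as ``a case analysis parallel to part (a).'' It is not parallel: the tool in part (a) was $3$-irreducibility of members of $\cF$, whereas here one must use that every $F'\in\cF$ is $2$-irreducible (this is exactly where the hypothesis that $\cK$ is closed under free amalgamation enters), which confines the non-$C_0$ part of the image of any $F'$ to a single related pair and hence embeds $F'$ weakly into a proper weak substructure of $F_0$, contradicting minimality of $\cF$. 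You never identify this as the operative fact, and the ``iterated free amalgamation over $C_0$'' framing does not literally work, since the pieces being glued share the vertices $a_i$, $v_{2,i}$, $v_{3,i}$ and not just $C_0$. The route is viable, but its one nontrivial step is missing.
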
 
\begin{proof}
Let $\M$ be a monster model of $\Th(\cM)$.

Part $(a)$. Using a straightforward generalization of Hrushovski's proof \cite{Udibook} of simplicity of the generic $K^r_n$-free $r$-hypergraphs for $r>2$, we show $\ind^d$ coincides with $\ind^a$ in $\M$ (which, since $\ind^a$ is symmetric, gives the simplicity of $\Th(\cM)$). First, we show that if $A,B,C$ are pairwise disjoint subsets of $\M$, then $A\ind^d_C B$.

Let $a=(a_1,\ldots,a_n)$ and $b=(b_1,\ldots,b_m)$ be disjoint tuples from $\M$ and fix $C\subset\M$ disjoint from both $a$ and $b$. Fix an infinite $C$-indiscernible sequence $(b^l)_{l<\omega}$, with $b^0=b$. We want to find $a'=(a'_1,\ldots,a'_n)$ such that $a'b^l\equiv_C ab$ for all $l<\omega$. By passing elements from $b$ to $C$, we may assume $b^0\cap b^1=\emptyset$.

Let $E=C\cup \bigcup_{l<\omega}b^l$. Define an $\cL$-structure $D$ with universe $a'E$ where $a'=(a'_1,\ldots,a'_n)$ is a tuple disjoint from $E$. Define relations on $D$ so that $E$ is a substructure of $D$ and, for each $l<\omega$, $a'b^l\cong_C ab$. No other relations hold in $D$; in particular, if $a'_i\in a'$, $b^l_j\in b^l$, and $b^m_k\in b^m$, with $l\neq m$, then $a',b^l_j,b^m_k$ are not related in $D$.

Note that, if $D$ is $\cF$-free, then we may embed $D$ in $\M$ over $E$, and the image of $a'$ in $\cM$ is as desired. Therefore it suffices to show that $D$ is $\cF$-free.

Suppose, toward a contradiction, that some $A\in\cF$ is a weak substructure of $D$. Since $E$ is $\cF$-free, we must have some $a'_i\in A\cap a'$. Moreover, for any fixed $l<\omega$, we have $a'b^l\cong_Cab$, and so, since $ab C$ is $\cF$-free, it follows that $A$ is not entirely contained in any single $a'b^l C$. Therefore, we may fix $l<m<\omega$, and elements $b^l_j\in b^l\backslash b^m$ and $b^m_k\in b^m\backslash b^l$, such that $b^l_j,b^m_k\in A$. Since $A$ is $3$-irreducible, it follows that $a'_i,b^l_j,b^m_k$ are related in $D$, which is a contradiction.

Now suppose $A,B,C\subset\M$ are arbitrary with $A\ind^a_C B$. Let $A'=\acl(AC)\backslash\acl(C)$, $B'=\acl(BC)\backslash\acl(C)$, and $C'=\acl(C)$. Then $A',B',C'$ are pairwise disjoint, and so $A'\ind^d_{C'}B'$. Then $A'C'\ind^d_{C'}B'C'$, and so $A\ind^d_C B$ by Fact \ref{divacl}. 

Part $(b)$. Assume $\cM$ is closed under free amalgamation of $\cL$-structures, and suppose $A\in\cF$ is not $3$-irreducible. We show that $\ind^a$ does not coincide with $\ind^d$ in $\M$, which, by Theorem \ref{thm:simp}, suffices to show that $\Th(\cM)$ is not simple.

Enumerate $A=\{a_1,\ldots,a_n\}$ so that $a_1,a_2,a_3$ are not related in $A$. Let $\hat{a}=(a_4,\ldots,a_n)$. We define an $\cL$-structure $E$ with universe $\{b_4,\ldots,b_n\}\cup\bigcup_{l<\omega}\{b^l_2,b^l_3\}$ and define relations such that, setting $\hat{b}=(b_4,\ldots,b_n)$:
\begin{enumerate}
\item $b^l_t\hat{b}\cong a_t\hat{a}$ for all $l<\omega$ and $t\in\{2,3\}$;
\item $b^l_2b^m_3\hat{b}\cong a_2a_3\hat{a}$ for all $l<m<\omega$;
\item no other relations hold in $E$.
\end{enumerate}

Suppose, toward a contradiction, that some $A'\in\cF$ is a weak substructure of $E$. Recall that every element of $\cF$ is $2$-irreducible. By construction of $E$, it follows that $A'$ is a substructure of $b^l_2b^m_3\hat{b}$ for some $l<m<\omega$. But $b^l_2b^m_3\hat{b}$ is isomorphic to a proper substructure of $A$ by definition, which contradicts that $\cF$ is minimal. Therefore $E$ is $\cF$-free and so we may assume $E\subset\M$. Note that $(b^l_2,b^l_3)_{l<\omega}$ is $\hat{b}$-indiscernible. 

Let $b_2=b^0_2$ and $b_3=b^0_3$. Since $\cF$ is minimal, we may use similar arguments to find $b_1\in\M$ such that $b_1b_t\hat{b}\cong a_1a_t\hat{a}$, for $t\in\{2,3\}$. We use $(b^l_2,b^l_3)_{l<\omega}$ to show $b_1\nind^d_{\hat{b}} b_2b_3$ (since algebraic closure is trivial, we have $b_1\ind^a_{\hat{b}}b_2b_3$, and so this suffices to finish the proof). Suppose, toward a contradiction, there is $b_*\in\M$ such that $b_*b^l_2b^l_3\hat{b}\cong b_1b_2b_3\hat{b}$ for all $l<\omega$. Then, by construction, $b_*b^0_2\hat{b}\cong a_1a_2\hat{a}$, $b_*b^1_3\hat{b}\cong a_1a_3\hat{a}$, and $b^0_2b^1_3\hat{b}\cong a_2a_3\hat{a}$. Since $a_1,a_2,a_3$ are not related in $A$, it follows that $A$ is a weak substructure of $b_*b^0_2b^1_3\hat{b}$, which contradicts that $\M$ is $\cF$-free. 
\end{proof}

\articleend

%\author{Gabriel Conant}

\title{Corrections to ``An Axiomatic Approach to Free Amalgamation"}

\author{Gabriel Conant}
\address{Department of Mathematics\\
The Ohio State University\\
Columbus, OH, USA}
\email{conant.38@osu.edu}

\date{\today}

\maketitle

Recent work of Scott Mutchnik \cite{Mutch2,Mutch1} has led to renewed interest in the previous paper, and the class of free amalgamation theories. The purpose of this note is to correct a number of errors in the paper. In the sequel, sections are numbered starting with 8 in order to not conflict with references to  Sections 1 through 7 of the original paper above.

I would like to thank  Michele Bailetti for bringing the issues in Section \ref{sec:TP2} and Section \ref{sec:717} to my attention. The issue in Section \ref{sec:Adler} arose from discussions with Alex Kruckman, who also deserves very special thanks for fixing the issue in Section \ref{sec:717} by providing the proof of Proposition \ref{prop:717}. Thanks also to Baraa Abd Aldaim and Katie Ellman-Aspnes for correcting typos in an earlier draft.

\section{Indiscernible oversights}\label{sec:C1}

\subsection{TP\textsubscript{2}}\label{sec:TP2} 
Definition \ref{def:TP2} gives a nonstandard formulation of $\TP_2$, which is incorrect and, in particular, too weak. However, the witness to $\TP_2$ constructed in the proof of Theorem \ref{thm:simp} satisfies some additional indiscernibility assumptions (described in the next proposition), which suffice to overcome the discrepancy.

\begin{proposition}
 Suppose there are tuples $a,b\in\M$ and  an array $(b^m_n)_{m,n<\omega}$ in $\M$ satisfying the following properties.
 \begin{enumerate}[$(i)$]
 \item There does not exist a tuple $a_*$ such that $a_*b^0_{n}\equiv ab$ for all $n<\omega$.
 \item For all $\sigma\colon\omega\to\omega$ there is a tuple $a_*$ such that $a_*b^m_{\sigma(m)}\equiv ab$ for all $m<\omega$.
 \item The sequence $(b^0_i)_{i<\omega}$ is indiscernible and, for all $m<\omega$, $b^m_{<\omega}\equiv b^0_{<\omega}$.
 \end{enumerate}
 Then $T$ has $\TP_2$.
 \end{proposition}
 \begin{proof}
 Note that $b^m_n\equiv b$ for all $m,n<\omega$ by condition $(ii)$. Let $p(x,b)=\tp(a/b)$. Then by condition $(i)$, $\bigcup_{n<\omega}p(x,b^0_n)$ is inconsistent. By compactness there is a formula $\varphi(x,b)\in p$ and some finite $I\subset\omega$ such that $\{\varphi(x,b^0_n):n\in I\}$ is inconsistent. Since $b^0_{<\omega}$ is indiscernible, it follows that $\{\varphi(x,b^0_n):n<\omega\}$ is $k$-inconsistent where $k=|I|$. Therefore, for any $m<\omega$, $\{\varphi(x,b^m_n):n<\omega\}$ is $k$-inconsistent since $b^m_{<\omega}\equiv b^0_{<\omega}$. With $(ii)$, $\varphi(x,y)$ now satisfies the standard definition of $\TP_2$. 
 \end{proof}
 
Note that (the incorrect) Definition \ref{def:TP2} includes only condition $(ii)$ and a stronger form of $(i)$ (which is implied by $(i)$ and $(iii)$). In any case, it is clear in the proof of Theorem \ref{thm:simp} that we have built an array satisfying $(i)$, $(ii)$, and $(iii)$. It is also worth pointing out that $(iii)$ is a weak form of array indiscernibility, and thus the converse of the previous proposition holds as well. Further discussion and details can be found on MathStackExchange  \cite{MichMSE}.
 
\subsection{SOP\textsubscript{3}} In the characterization of SOP$_3$ given by Proposition \ref{SOP3}, one must assume that $(b_i)_{i<\omega}$ is indiscernible in order to be able to lift indiscernibility to the pair sequence $(a_i,b_i)_{i<\omega}$ using EM-types. Alternatively, it would suffice to replace the types $p$ and $q$ by formulas. The only part of the paper that uses Proposition \ref{SOP3} is the construction of SOP$_3$ in the proof of Theorem \ref{thm:NSOP3}. 
 In the proof, we have a type $r(x,y)$ over a parameter set $C$, as well as some $b_0\equiv_C b_1$ such that $r(x,b_0)\cup r(x,b_1)$ is inconsistent. Then, with $z=(x_1,x_2)$, we use $p(x,z)=r(x,y_1)$ and $q(x,z)=r(x,y_2)$ in the application of Proposition \ref{SOP3}. However, by compactness, there is a formula $\varphi(x,y)$ in $r(x,y)$ such that $\varphi(x,b_0)\wedge\varphi(x,b_1)$ is inconsistent, and  we can replace $r$ with $\varphi$ in the rest of the proof.

\section{Algebraic closure in dividing independence}\label{sec:Adler}

Fact \ref{divacl}  states that $a\indiv_C b$ if and only if $\acl(aC)\indiv _{\acl(C)}\acl(bC)$. The justification for this is \cite[Remark 5.4(3)]{Adgeo2}, which was recently discovered   to be false  (see \cite{CoKr2}). \textbf{Therefore, Fact \ref{divacl}  is  incorrect.} Fortunately, all uses of this erroneous result can be replaced by weaker statements, which are true.

\begin{proposition}\label{fixacl}$~$
\begin{enumerate}[$(a)$]
\item If $\acl(aC)\indiv_{\acl(C)}\acl(bC)$ then $a\indiv_C b$.
\item If $a\indiv_C b$ then $a\inda_C b$.
\item If $\indiv$ and $\inda$ do not coincide then there is an algebraically closed set $C$ and algebraically closed tuples $a,b$ such that $a\cap b=C$ and $a\nindiv_C b$.
\item Fact \ref{divacl}  holds under the additional assumption that $T$ is simple.
\end{enumerate}
\end{proposition}
\begin{proof}
Part $(a)$. This is a straightforward exercise. See also \cite[Section 2.2]{CoKr2}.
%Suppose $\acl(aC)\indiv_{\acl(C)}\acl(bC)$. Then $a\indiv_{\acl(C)} b$ by monotonicity. To prove $a\indiv_C b$, fix a $C$-indiscernible sequence $(b_i)_{i<\omega}$ with $b_0=b$. Then $(b_i)_{i<\omega}$ is $\acl(C)$-indiscernible (see \cite[Corollary 1.7]{Cabook2}), hence there is some $a^*$ such that $a^*b_i\equiv_{\acl(C)} ab_i$ for all $i<\omega$ (so, in particular, $a^*b_i\equiv_C ab_i$ for all $i<\omega$).

Part $(b)$. This is a folklore fact, which is often cited in the literature as a corollary of the incorrect remark from \cite{Adgeo2}. See  \cite[Section 2]{CoHa} for  discussion and a direct proof. 

Part $(c)$. Assume $\indiv$ and $\inda$ do not coincide. By part $(b)$, there are $a',b',C'$ such that $a'\nindiv_{C'} b'$ and $a'\inda_{C'} b'$. Let $a$ enumerate $\acl(a'C')$, $b$ enumerate $\acl(b'C')$, and $C=\acl(C')$. Then $a'\inda_{C'}b'$ says precisely that $a\cap b=C$. Moreover, since $a'\nindiv_{C'} b'$, we have $a\nindiv_C b$ by part $(a)$.

Part $(d)$. Recall that if $T$ is simple then $\indiv$ coincides with $\indf$. Moreover, the analogue of Fact \ref{divacl} for $\indf$ in a simple theory is well-known (see, e.g., \cite[Proposition 5.20]{Cabook2}, whose proof is left as an exercise). It is also not hard to show that $\indf$ satisfies the analogue of Fact \ref{divacl} in \emph{any} theory (see \cite[Proposition 2.12]{CoKr2}). 
\end{proof}

We now provide patches for each use of  Fact \ref{divacl} in the paper.

\begin{enumerate}[$(1)$]
\item In the proof of Theorem \ref{thm:simp}$[(ii)\Rightarrow(iii)]$, our use of Fact \ref{divacl} was only in order to apply Proposition \ref{fixacl}$(c)$. 

\item In Fact \ref{thm:1Beq}, we assume $T$ is simple. Thus Fact \ref{divacl} holds for $T$ by Proposition \ref{fixacl}$(d)$. 

\item In Lemma \ref{lem:inddiv}, we tacitly used Fact \ref{divacl}, but only in order to conclude Proposition \ref{fixacl}$(b)$.

\item In the proof of Theorem \ref{thm:NSOP3}, our use of Fact \ref{divacl} was only in order to apply Proposition \ref{fixacl}$(c)$.

\item In the proof of Theorem \ref{thm:irred}, our use of Fact \ref{divacl} was only in order to apply Proposition \ref{fixacl}$(a)$. 
\end{enumerate}

\section{The dichotomy for modular free amalgamation theories}\label{sec:717}

Theorem \ref{thm:NSOP3}  states that any modular free amalgamation theory $T$ is either simple or  $\SOP_3$. However, there is a  gap in the proof of Claim 1. In order to discuss the details, let us first recall how the proof starts. Assume $T$ is not simple. Then $\inda$ and $\indiv$ do not coincide by  Theorem \ref{thm:simp}. By Proposition \ref{fixacl}$(c)$, there are algebraically closed tuples $a$, $b$ such that, setting $C=a\cap b$, we have $a\nindiv_C b$. So there is a $C$-indiscernible sequence $(b_i)_{i<\omega}$ such that, if $p(x,y)=\tp(a,b/C)$, then $\{p(x,b_i):i<\omega\}$ is $k$-inconsistent for some $k<\omega$. At this point, Claim 1 attempts to reduce to the case that $k=2$. We do this by choosing a minimal  $k$ witnessing $a\nindiv_C b$, and then defining the sequence $b^*_i=\acl(b_{i(k-1)}b_{i(k-1)+1}\ldots b_{i(k-1)+k-2})$. By minimality, there is some $a^*$ such that $a^*b_i\equiv_C ab$ for all $i<k-1$, and it is (correctly) shown that  $\{p^*(x,b^*_i):i<\omega\}$ is $2$-inconsistent, where $p^*(x,y)=\tp(a^*,b^*_0/C)$. However, we may have lost $a^*\cap b^*_0=C$. To fix this, the reader is told to set $C^*=a^*\cap b^*_0$  and replace $(b^*_i)_{i<\omega}$ with a $C^*$-indiscernible realization of its EM-type, while also moving by an automorphism to ensure we keep $b^*_0$ as the first term of the sequence. But in order for this to work, we would need to know that $b^*_i\equiv_{C^*} b^*_j$ for all $i<j<\omega$, and there is no reason for this to be true. 

Before getting into the details of how to fix this gap,  we first note that if one strengthens the modularity assumption to \emph{disintegration} of algebraic closure, then the above proof works. Indeed, each $b_i$ is algebraically closed, and thus disintegration would imply $b^*_i=b_{i(k-1)}\ldots b_{i(k-1)+k-2}$. Thus, since $a\cap b=C$ and $a^*b_i\equiv_C ab$ for all $i<k-1$, we do have $a_*\cap b^*_0=C$ in the disintegrated case.  The reason to point this out is that all of the examples of free amalgamation theories  above have disintegrated algebraic closure (but see Remark \ref{rem:mutchnik} below for  further details). 

We now turn to fixing the gap.
In \cite{Mutch1}, Mutchnik used the general proof strategy of Theorem \ref{thm:NSOP3} to show that SOP$_1$ and SOP$_2$ coincide at the level of theories. The overall scope of this result draws  from many areas of neostability  (e.g., \cite{ChKa} and \cite{KaRam}). However, in the special case of modular free amalgamation theories, Mutchnik's strategy in the analogue of Claim 1 suggests an elementary fix for the gap  above. We also note that, while modularity of $T$ is used in the proof of Claim 2 of Theorem \ref{thm:NSOP3}, it is not used in the (incorrect) proof of Claim 1. So we emphasize that the following correct proof does require modularity. 

\begin{proposition}\label{prop:717}
Let $T$ be a modular complete theory such that $\inda$ and $\indiv$ do not coincide. Then there is an algebraically closed set $C$, algebraically closed tuples $a$ and $b$, and a $C$-indiscernible sequence $(b_i)_{i<\omega}$, with $b_0=b$, such that $a\cap b=C$ and there is no $a^*$ with $a^{*}b_0\equiv_C a^{*}b_1\equiv_C ab$. 
\end{proposition}
\begin{proof}
We first argue that it suffices to find  an algebraically closed set $C$, a $C$-indiscernible sequence $(b'_i)_{i<\omega}$,  and some tuple $a'$ such that 
\begin{equation*}
\textnormal{$\textstyle a'\inda_C b'_0$ and there is no $a^*$ with $a^*b'_0\equiv_C a^*b'_1\equiv_C a'b'_0$.}\tag{$\dagger$}
\end{equation*}
Indeed, with $(\dagger)$ in hand,  let $a$ enumerate $\acl(a'C)$ and $b$ enumerate $\acl(b'_0C)$. So $a\cap b=C$. Let $b_0=b$. For $i>0$, choose an automorphism $\sigma_i$ over $C$ such that $\sigma_i(b'_0)=b'_i$ and set $b_i=\sigma_i(b_0)$. 
Let $(b^*_i)_{i<\omega}$ be a $C$-indiscernible realization of the EM-type of $(b_i)_{i<\omega}$ over $C$. Let $b''_i$ be the subtuple of $b^*_i$ corresponding to the location of $b'_i$ in $b_i$.  Since each $b_i$ realizes $\tp(b_0/C)$, so does each $b^{*}_i$ and, in particular, $b^{*}_i$ enumerates $\acl(b''_i)$. Since $(b'_i)_{i<\omega}$ is $C$-indiscernible, we have $b''_{<\omega}\equiv_C b'_{<\omega}$. Thus after moving by an automorphism over $C$, we may assume $(b_i)_{i<\omega}$ is $C$-indiscernible. By $(\dagger)$, we clearly have no $a^*$ such that $a^{*}b_0\equiv_C a^{*}b_1\equiv_C ab$.

Now we construct $C$, $(b'_i)_{i<\omega}$, and $a'$ satisfying $(\dagger)$.\footnote{This proof is due to Alex Kruckman.} Fix $a$, $b$, and $C$ such that $a\inda_C b$ and $a\nindiv_C b$ (recall Proposition \ref{fixacl}$(b)$). By Proposition \ref{fixacl}$(a)$ (or $(c)$), we may assume $C$ is algebraically closed. Fix a $C$-indiscernible sequence $(b_i)_{i<\omega}$, with $b_0=b$, such that there is no $a^*$ satisfying $a^*b_i\equiv_C ab$ for all $i<\omega$. Since $a\inda_C b$, we may  choose $k\geq 1$  maximal such that there is a tuple $a_0$ with $a_0\inda_C b_{<k}$ and $a_0b_t\equiv_C ab$ for all $t<k$.  
Set $b'_i=b_{ik}b_{ik+1}\ldots b_{ik+k-1}$. Then $(b'_i)_{i<\omega}$ is $C$-indiscernible, and we have $a_0\inda_C b'_0$. 
Let $\kappa=|\acl(b_{\leq k}C)|^+$ and, using extension for $\inda$, construct a sequence $(a_i)_{i<\kappa}$ such that $a_i\equiv_{Cb'_0}a_0$ and $a_i\inda_C a_{<i}b'_0$. By base monotonicity for $\inda$, we have $a_i\inda_{a_{<i}C}b'_0$ for all $i<\kappa$. Using left transitivity and induction, we see that  $a_{<i}\inda_C b'_0$ for all $i<\kappa$, hence $a_{<\kappa}\inda_C b'_0$.

Finally, we show that there is no $a^*_{<\kappa}$ with $a^*_{<\kappa}b'_0\equiv_C a^*_{<\kappa}b'_1\equiv_C a_{<\kappa}b'_0$, and so we can set $a'=a_{<\kappa}$ to obtain $(\dagger)$. Toward a contradiction, suppose that there is such an $a^*_{<\kappa}$. Fix some $i<\kappa$. Then $a^*_ib'_0\equiv_C a_ib'_0\equiv a_0b'_0$,
and so for all $t<k$, we have $a^*_ib_t\equiv_C a_0b_t\equiv_C ab$. Also, $a^*_ib'_1\equiv_C a_ib'_0\equiv_C a_0b'_0$, and so $a^*_ib_{k}\equiv_C a_0b_0\equiv_C ab$. So $a^*_ib_k\equiv_C ab$ for all $t\leq k$ and, by maximality of $k$, it follows that $a^*_i\ninda_C b_{\leq k}$. 

Now, for each $i<\kappa$, fix a witness $e_i\in (\acl(a^*_iC)\cap \acl(b_{\leq k}C))\backslash C$ to $a^*_i\ninda_C b_{\leq k}$. By choice of $\kappa$, there are $j<i<\kappa$ such that $e_i=e_j$, and so $a^*_i\ninda_C a^*_j$. But $a^*_ia^*_j\equiv_C a_ia_j$, and $a_i\inda_C a_j$, which is a contradiction.
\end{proof}

\begin{remark}\label{rem:mutchnik}
As mentioned above, the previous proof is inspired by work of Mutchnik \cite{Mutch1}, who uses a strategy similar to Theorem \ref{thm:NSOP3} to construct SOP$_3$ in any theory with SOP$_1$ and  NSOP$_2$  (hence there are no such theories, so SOP$_1$ and SOP$_2$ coincide). In subsequent work, Mutchnik \cite{Mutch2} uses similar tools to prove several new results about free amalgamation theories. Recall that all of the examples  of such theories given above are modular and, in fact, disintegrated. Moreover, any \emph{simple} free amalgamation theory is modular by Corollary \ref{cor:1B}. Thus, Question 7.19  asks if \emph{every} free amalgamation theory is modular. In Section 4 of \cite{Mutch2}, Mutchnik provides a negative answer by showing that the theory $T_{f,c}$ of a generic binary function with a distinguished constant is a non-modular free amalgamation theory. By work of Kruckman and Ramsey (see Corollary 3.13 and Proposition 3.14 of \cite{KrRam}), $T_{f,c}$ is not simple, but is NSOP$_3$ (in fact, NSOP$_1$). Altogether,  the modularity assumption cannot be removed from Theorem \ref{thm:NSOP3}. However, Mutchnik  extends Theorem \ref{thm:NSOP3} to the non-modular case by proving the following results for an arbitrary free amalgamation theory $T$ (see Section 2 of  \cite{Mutch2}).
\begin{enumerate}[$(1)$]
\item $T$ is either NSOP$_1$ or SOP$_3$.
\item Moreover, if $T$ is NSOP$_1$ then Kim-independence coincides with algebraic independence over models. 
\item Therefore, $T$ is simple if and only if it is NSOP$_1$ and modular.
\end{enumerate}
We also note that the theory $T_{f,c}$ is not rosy (by Propositions 3.17 and 3.18 of \cite{KrRam}), which shows the necessity of the modularity assumption in Corollary \ref{cor:FArosy}.
\end{remark}

\end{document}